\numberwithin{equation}{section}
\newcommand{\R}{\mathbb{R}}
\newcommand{\N}{\mathbb{N}}
\newcommand{\Z}{\mathbb{Z}}
\newcommand{\beqnn}{\begin{eqnarray*}}
\newcommand{\eeqnn}{\end{eqnarray*}}
\newtheorem{thm}{Theorem}[section]
\newtheorem{lem}[thm]{Lemma}
\newtheorem{prop}[thm]{Proposition}
\newtheorem{rmk}[thm]{Remark}
\newtheorem{defi}{Definition}[section]
\begin{document}

\title{Complexity of Hofer's geometry in higher dimensional manifolds}

\author{Zhijing Wendy Wang}\address{Department of Mathematics, University of Chicago, Chicago, IL 60637, USA}\email{zhijingw@uchicago.edu}

\begin{abstract}
This paper establishes robust obstructions to representing Hamiltonian diffeomorphisms as $k$-th powers ($k \geq 2$) or embedding them in flows for certain higher-dimensional symplectic manifolds $(M,\omega)$, including surface bundles. We prove that in the Hamiltonian group $(\mathrm{Ham}(M,\omega), d_H)$ equipped with the Hofer metric, there exist arbitrarily large balls that are disjoint from the set of $k$-th powers. Furthermore, we demonstrate that the free group on two generators embeds into every asymptotic cone of $(\mathrm{Ham}(M,\omega), d_H)$, revealing the large-scale geometric complexity of the Hamiltonian group.
\end{abstract}
\maketitle

\tableofcontents

\section{Introduction}

\subsection{Background and main result}
Let $(M,\omega)$ be a symplectic manifold. Given a compactly supported smooth function $H: S^1 \times M \to \mathbb{R}$, called a \emph{Hamiltonian}, there exists a unique time-dependent vector field $X_t$ such that $\omega(X_t, \cdot) = -dH_t$. Let $\phi^t_H$ be the flow generated by $X_t$. The time-$1$ map $\phi^1_H$ is called a Hamiltonian diffeomorphism. We denote by $\mathrm{Ham}(M,\omega)$ the set of all such Hamiltonian diffeomorphisms. A direct calculation shows that $\mathrm{Ham}(M,\omega)$ is a subgroup of the group of symplectomorphisms, which we denote by $\mathrm{Symp}(M,\omega) = \{\phi \in \mathrm{Diff}(M) : \phi^*\omega = \omega\}$. 

Hamiltonian diffeomorphisms form a fundamental class of transformations in symplectic geometry, arising naturally as time-1 maps of Hamiltonian flows generated by smooth functions on symplectic manifolds. These maps play a central role in classical mechanics, symplectic topology, and dynamical systems, encoding rich geometric and dynamical information. Understanding the structure and geometry of the group \(\mathrm{Ham}(M,\omega)\) of Hamiltonian diffeomorphisms, equipped with the Hofer metric, is a major theme in modern symplectic geometry.

The Hofer metric, introduced by Hofer \cite{Hofer}, provides a bi-invariant, non-degenerate metric on \(\mathrm{Ham}(M,\omega)\), measuring the minimal oscillation of Hamiltonians generating a given diffeomorphism. For $\phi \in \mathrm{Ham}(M,\omega)$, its Hofer norm is defined as
\[
\|\phi\|_H = \inf_{H : \phi^1_H = \phi} \int_0^1 \left( \max_M H_t - \min_M H_t \right) dt.
\]
This is a non-degenerate norm on all symplectic manifolds (see \cite{Hofer}), which induces the Hofer metric $d_H(\phi, \psi) = \|\phi \circ \psi^{-1}\|_H$. The Hofer metric is bi-invariant, since the Hofer norm is conjugation invariant by its definition.

This metric captures the "size" and complexity of Hamiltonian diffeomorphisms and has led to deep insights into symplectic rigidity and flexibility phenomena.

Within \(\mathrm{Ham}(M,\omega)\), two natural subsets arise: the autonomous Hamiltonian diffeomorphisms \(\mathrm{Aut}(M,\omega)\),  which are generated by time-independent Hamiltonian functions $H: M \to \mathbb{R}$; and the set of \(k^{\text{th}}\) powers $\mathrm{Ham}^k(M,\omega) = \{ f \in \mathrm{Ham}(M,\omega) : \exists\, g \in \mathrm{Ham}(M,\omega) \text{ such that } f = g^k \}$, consisting of diffeomorphisms that can be written as the \(k\)-fold composition of another Hamiltonian diffeomorphism. Autonomous maps correspond to simpler, time-independent dynamics, and powers reflect algebraic structure within the group. By definition, an autonomous Hamiltonian diffeomorphism is the time-$1$ map $\phi_H^1$ for a time-independent $H$ and can therefore be written as a power of another Hamiltonian diffeomorphism $\phi_H^{\frac{1}{k}}$. So we always have the inclusions $\mathrm{Aut}(M,\omega) \subset \mathrm{Ham}^k(M,\omega) \subset \mathrm{Ham}(M,\omega)$ for any symplectic manifold $M$ and any $k$.  

A fundamental question is: How large are these subsets within \(\mathrm{Ham}(M,\omega)\) when measured by the Hofer metric? In other words, can every Hamiltonian diffeomorphism be approximated by autonomous maps or powers, or are these subsets ``small" in a geometric sense?

Polterovich and Shelukhin \cite{PS} answered this question for surfaces $\Sigma_g $ of genus $g\ge 4$, showing that the complement of autonomous maps contains arbitrarily large Hofer balls by constructing a family of Hamiltonian diffeomorphisms $\{f_n\}$ such that $d_H(f_n,\;\mathrm{Aut}(\Sigma_g,\omega))\to \infty$ as $n\to \infty$. This reveals that genuinely time-dependent Hamiltonian diffeomorphisms form a large and geometrically significant part of the group. The same result was extended to product manifolds $\Sigma_g\times M^\prime$ in \cite{PS},\cite{PSS} and \cite{Zhang}; and to surfaces $\Sigma_g$ of genus $g=2,3$ by \cite{Chor}. 

This work extends this phenomenon to a broader class of higher-dimensional symplectic manifolds, revealing new layers of complexity in their Hamiltonian groups. We shall work on manifolds with very large first homotopy group.

\begin{defi}\label{cond*}
    Let $(M,\omega)$ be a symplectic manifold. We say that $(M,\omega)$ \emph{satisfies condition  (*)}, if \begin{itemize}
        \item it is either a closed, symplectically aspherical manifold  or a Weinstein domain;
        
        \item it contains two Lagrangian tori $T_1$ and $T_2$ embedded in $M$, transversely intersecting at one point;
       
        \item  the natural homomorphism $\Z^n *\Z^n\simeq \pi_1(T_1\cup T_2)\to \pi_1(M)$ is monomorphic;
        
        \item there exists two homotopically non-trivial curves $a\subset T_1$ and $b\subset T_2$ such that $M$ is $\alpha$-atoroidal for any $\alpha\in 
        \langle a,b\rangle$ (meaning that the symplectic form and first Chern class vanish on any tori swept out by $\alpha$, see Section \ref{floer}).
    \end{itemize} 
\end{defi}

Examples of manifolds satisfying these conditions include the following:

(1) $M=T^*\mathbb{T}^n \cup_\psi T^*\mathbb{T}^n$ is the plumbing of cotangent bundles of two $n$-dimensional tori, equipped with the standard symplectic form.

(2) $M=S_1\ltimes_\psi S_2$ is a surface bundle, where $S_1,S_2$ are closed surfaces of genus $\ge 2$ with appropriate monodromy $\psi: \pi_1(S_1)\to Symp(S_2)$, equipped with the symplectic form $\omega=\omega_1+k\omega_2$ where $\omega_1,\omega_2$ are the volume forms on $S_1$ and $S_2$ and $k>0$. Suppose that there exist simple closed curves $a_0,b_0\subset S_1 ;\; c,d\subset S_2$, $|a_0\pitchfork b_0|=|c\pitchfork d|=1$, such that $\psi(a_0)(c)=c$, $\psi(b_0)(d)=d$. Then $T_1$ generated by $a_0,c$ and $T_2$ generated by $b_0,d$ are two embedded Lagrangian tori in $M$, transversely intersecting at one point. Under many choices of $\psi$, the tori $T_1$ and $T_2$ will generate a free product in $\pi_1(M)$. We shall  provide some detailed constructions in Section \ref{surfbund}.

(3) Luttinger surgery on examples in (2). Let $M$ be a symplectic $4$-manifold, and $T$ be a Lagrangian torus in $M$ with a homotopically nontrivial closed curve $\gamma\subset T$, then the Luttinger surgery of $M$, denoted by $M^\prime= M(T,\gamma,k)$ is the manifold obtained from cutting out a tubular neighborhood of $T$ and gluing back a copy of $U_r=\mathbb T^2\times [-r,r]\times [-r,r]$, using a symplectomorphism $\psi_k$, which is topologically a $1/k$ Dehn surgery along $\gamma$. (See Section \ref{lutsurg} for more details.) Let $T_1$ and $T_2$ be the Lagrangian tori in $M=S_1\ltimes_\psi S_2$ given in (2), and let $T$ be a Lagrangian tori in $M$ disjoint from $T_1$ and $T_2$, then for appropriate choices of $T,\gamma$, the resulting manifold $M(T,\gamma,k)$ also satisfies condition (*).

(4) Symplectic sums based on examples in (2). We take a surface bundle $M_1=S_1\ltimes_{\psi_1}S_2$ as in example (2), let $N_1\simeq S_2$ be a fiber disjoint from $T_1$ and $T_2$. Take another surface bundle $M_2=S_2\ltimes_{\psi_2}S_3$ that has a section $N_2\simeq S_2$, with Euler class $0$. Then we may take a symplectic sum $M_1\#_{N_1\simeq N_2}M_2$ and it still satisfies condition (*). See Section \ref{sympsum} for a more detailed discussion.

Our first main theorem shows that for any integer \(k \geq 2\), there exist Hamiltonian diffeomorphisms that are arbitrarily far in the Hofer metric from the set \(\mathrm{Ham}^k(M,\omega)\) of \(k^{\text{th}}\) powers. In particular, they are arbitrarily far from autonomous maps.


\begin{thm}\label{nokthroot}
    Let $(M,\omega)$ be a symplectic manifold of dimension $2n$, satisfying condition (*). Then for each integer $k \geq 2$, there exists a sequence $\{f_n\}_{n=1}^\infty \subset \mathrm{Ham}(M)$ such that $$d_H(f_n, \mathrm{Ham}^k(M,\omega)) \to \infty$$ as $n \to \infty$. In particular, $$d_H(f_n, \mathrm{Aut}(M,\omega)) \to \infty$$ as $n \to \infty$.
\end{thm}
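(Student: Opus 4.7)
The plan is to adapt the Polterovich--Shelukhin egg-beater construction to this higher-dimensional setting and to obstruct being a $k$-th power by means of Lagrangian Floer-theoretic spectral invariants that detect the free-product structure $\pi_1(T_1\cup T_2)\simeq\Z^n*\Z^n\hookrightarrow\pi_1(M)$ furnished by condition (*).

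First I would choose Weinstein neighborhoods of $T_1,T_2$ and build Hamiltonians $H_1,H_2$ supported in them whose time-$1$ maps $\sigma_1,\sigma_2$ shear each torus along the loop $a\subset T_1$, respectively $b\subset T_2$. Setting $f_n=(\sigma_1\sigma_2)^n$, the geometric effect (exactly as in the standard egg-beater picture) is that, via their capping paths, the intersection points of $T_1\cap f_n(T_2)$ realize a large collection of distinct elements of the free group $F=\langle a,b\rangle\subset\pi_1(M)$, with actions growing linearly in $n$. The atoroidal clause of (*) then allows me to set up the filtered Lagrangian Floer complex $CF^*(T_1,\phi(T_2))$ over a suitable Novikov field --- the hypothesis ensures that the action functional is single-valued on the relevant components of the path space and that the Maslov index is integer-valued --- and to extract spectral invariants $c_\alpha(\phi)\in\R$ indexed by $\alpha\in F$, each satisfying the usual $1$-Lipschitz estimate $|c_\alpha(\phi)-c_\alpha(\psi)|\le d_H(\phi,\psi)$. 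A direct calculation in the Weinstein model, combined with the monomorphism $\Z^n*\Z^n\hookrightarrow\pi_1(M)$ that prevents homotopy-class cancellations, should then produce words $\alpha_n\in F$ of length $\sim n$ with $c_{\alpha_n}(f_n)\ge c\,n$ for some $c>0$.

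The last and most delicate step is to show that for every $\phi\in\mathrm{Ham}^k(M,\omega)$ the invariants $c_{\alpha_n}(\phi)$ cannot grow linearly along a well-chosen sequence $\alpha_n$. The key observation is algebraic: if $\phi=g^k$, the intersection points $T_1\cap g^k(T_2)$ arrange into $g$-orbits whose associated classes in $F$ are constrained to a structured subset (morally, unions of conjugates of $k$-th powers). One then picks $\alpha_n$ outside that subset --- for instance $\alpha_n=(ab)^n a$, whose abelianization $((n{+}1),n)\in\Z^2$ is manifestly not in $k\cdot\Z^2$ for any $k\ge 2$ --- and combining with the Lipschitz estimate yields $d_H(f_n,\phi)\ge c\,n-O(1)$ uniformly in $g$, whence $d_H(f_n,\mathrm{Ham}^k(M,\omega))\to\infty$. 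The hard part is precisely to promote this algebraic non-$k$-th-power property of $\alpha_n$ into a genuine Floer-theoretic upper bound on $c_{\alpha_n}$ over $\mathrm{Ham}^k(M,\omega)$: this demands careful bookkeeping of Floer cappings under the iteration $g\mapsto g^k$ and full use of the atoroidality built into condition (*) to rule out Floer trajectories that would identify $\alpha_n$ with a $k$-th power class.
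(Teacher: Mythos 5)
Your construction of $f_n$ as iterated shears supported in Weinstein neighborhoods of $T_1,T_2$, and your use of condition (*) to separate homotopy classes and obtain linearly growing actions, do match the paper (which takes $f_N=\tau(N,ab)$ and analyzes the fixed points of $f_N^k=\tau(N,(ab)^k)$). The gap is in the mechanism you propose for obstructing $k$-th powers. First, $g$ does not act on $T_1\cap g^k(T_2)$ — it maps this set to $g(T_1)\cap g^{k+1}(T_2)$ — so the ``$g$-orbits'' of intersection points you invoke do not exist, and there is no natural $\Z_k$-symmetry on a Lagrangian Floer complex $CF(T_1,\phi(T_2))$ when $\phi=g^k$. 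Second, and more fundamentally, being a $k$-th power imposes no constraint on which free homotopy classes carry intersection points or fixed points: an arbitrary $g$ can produce generators in any class, so choosing $\alpha_n=(ab)^na$ with abelianization outside $k\cdot\Z^2$ does not yield any upper bound on $c_{\alpha_n}(g^k)$ uniform over $g$. The step you flag as ``the hard part'' is not merely delicate; as formulated it cannot be carried out, because the premise that $\phi=g^k$ constrains the homotopy classes of its Floer generators is false.

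What the paper does instead is work with Hamiltonian fixed-point Floer homology of the $k$-th iterate $f_N^k$ in a \emph{primitive} free loop class $\gamma$. Primitivity guarantees that the loop-rotation operator $R_k(x)(t)=x(t+1/k)$ acts freely on the generators, so the exactly $k$ generators of the extremal Conley--Zehnder index $r$ form a single free $\Z_k$-orbit, contributing one long bar (of length $\ge\delta N$, from the action-gap estimate of Proposition \ref{deltaA}) to each isotypic piece of the $\Z_k$-equivariant persistence module. The $\Z_k$-spectral-spread inequality (Theorem 4.22 of \cite{PS}, packaged here as Proposition \ref{dishamk}) is what converts this equivariant bar-length information into $d_H(f_N,\mathrm{Ham}^k(M,\omega))\ge \delta N/(4k)$; it is this equivariant comparison of persistence modules — not the non-$k$-th-power property of a word in $\pi_1(M)$ — that separates $f_N$ from all $k$-th powers. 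A Lagrangian-Floer route would require an analogue of this equivariant machinery, which your proposal does not supply.
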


This result quantifies the geometric``smallness" of powers and autonomous maps, showing that some Hamiltonian diffeomorphisms are genuinely and robustly time-dependent and cannot be approximated by powers. This has important implications for the dynamical complexity and algebraic structure of \(\mathrm{Ham}(M,\omega)\).

A main difference between our work and the aforementioned results is that our construction is genuinely higher dimensional. In \cite{PS}, \cite{PSS} and \cite{Zhang}, for the product manifolds $\Sigma_g\times M$, the Hamiltonian diffeomorphisms $\{f_n\}$ such that $d_H(f_n,\mathrm{Aut}(\Sigma_g,\omega))\to \infty$ were constructed as the product map $f_n=\{\phi_n\times id\}$ on $\Sigma_g\times M$ where $\phi_n\in \mathrm{Ham}^k(\Sigma_g,\omega)$. Consequently, every periodic orbit of such an $f_n$ is confined to a two-dimensional submanifold $\Sigma_g \times \{\text{pt}\}$.

By contrast, the manifolds we consider are not products. In the case of a surface bundle $M$, the Hamiltonian diffeomorphisms we construct are not fiber-preserving and do not project to the base. More significantly, the dynamical behavior of our sequence $\{f_n\}$ from Theorem \ref{nokthroot} is richer: we obtain periodic orbits that become arbitrarily dense in an open set as $n$ increases.

\begin{prop}\label{prop:denseorbits}
    Let $\{f_n\}$ be the sequence of Hamiltonian diffeomorphisms constructed in Theorem \ref{nokthroot}. For $n$ sufficiently large, the number of $k$-periodic points of $f_n$ grows at least exponentially in $k$. Furthermore, there exists an open set $U \subset M$ and a sequence of periodic points $\{x_n\}$ of $f_n$ such that the orbits $\mathcal{O}(f_n, x_n) = \{f_n^k(x_n) : k \in \mathbb{N}\}$ become dense in $U$ as $n \to \infty$. That is, for every open subset $V \subset U$, there exists $N \in \mathbb{N}$ such that for all $n > N$, we have $\mathcal{O}(f_n, x_n) \cap V \neq \varnothing$.
\end{prop}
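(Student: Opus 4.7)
The plan is to read off the dynamical features of $f_n$ directly from the construction used in Theorem \ref{nokthroot}. That construction builds $f_n$ out of Hamiltonian twists supported in Weinstein tubular neighborhoods of the two Lagrangian tori $T_1$ and $T_2$, in such a way that the action of $f_n$ on a suitable Floer chain complex tracks the combinatorics of words in the free subgroup $\langle a,b\rangle \hookrightarrow \pi_1(T_1\cup T_2) \hookrightarrow \pi_1(M)$ furnished by Definition \ref{cond*}. Concretely, I would first identify, for $n$ sufficiently large, a compact invariant region $K_n\subset M$ containing the intersection point $T_1\cap T_2$ on which $f_n$ is semi-conjugate to a subshift on the alphabet $\{a,a^{-1},b,b^{-1}\}$; the shift parameter along each torus is of order $1/n$, so the geometric scale of the resulting invariant set is refined as $n\to\infty$.

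For the exponential lower bound on $k$-periodic points, I would transport the symbolic picture to $f_n^k$. A reduced closed word $w$ in $\langle a,b\rangle$ of length $k$ determines an ordered sequence of returns to Weinstein charts near $T_1$ or $T_2$, and, by the atoroidality clause of Definition \ref{cond*}, the Floer generator attached to such a $w$ has a well-defined homotopy class in $\pi_1(M)$ on which neither $\omega$ nor $c_1$ contributes. The monomorphism $\pi_1(T_1\cup T_2)\hookrightarrow \pi_1(M)$ then ensures that distinct reduced words give distinct homotopy classes, so no two such generators can cancel in the Floer differential. Since the number of reduced closed words of length $k$ in $F_2=\langle a,b\rangle$ grows at least like $3^{k-1}$, this yields at least $\sim 3^{k}$ many $k$-periodic points of $f_n$ for every sufficiently large $n$.

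For the density statement, I would take $U$ to be the interior of the union of the Weinstein tubular neighborhoods of $T_1$ and $T_2$ on which the semi-conjugacy above is realized. For each large $n$, I would choose a word $w_n$ of length $k_n\to\infty$ whose letter sequence systematically visits every cell of a $1/n$-grid on $T_1\cup T_2$, and let $x_n$ be the associated periodic point. Fixing an open $V\subset U$, once $1/n$ is smaller than the width of $V$, the grid cell sitting inside $V$ must appear among the letters of $w_n$, and therefore $\mathcal{O}(f_n,x_n)\cap V\neq \varnothing$ for all $n>N(V)$, which is exactly the claimed density.

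The main obstacle, and the place where most of the actual work lies, is promoting the symbolic picture from the purely Floer-theoretic world (where one controls only counts of chain-level generators and their homotopy classes) to \emph{geometric} periodic orbits of $f_n$ that visit the prescribed Weinstein cells in the prescribed order. I would address this by a small, Hofer-controlled perturbation of $f_n$ that makes it non-degenerate near $T_1\cup T_2$ and aligns its local normal form with an explicit hyperbolic model attached to each letter of the alphabet; one then has to check that this perturbation neither destroys the Hofer estimates underlying Theorem \ref{nokthroot} nor changes the homotopy classes that drive the exponential count. Once that match between symbolic data and genuine trajectories is in place, both the exponential growth of $k$-periodic points and the density assertion follow as above.
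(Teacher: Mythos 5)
Your proposal has the right intuition---the periodic points of the linked twist map are indexed by word-like combinatorial data, and density is obtained by choosing that data to sweep out a grid---but it has a genuine gap at exactly the point you flag as ``the main obstacle'': nothing in the proposal actually produces the periodic points. The homotopy-class separation coming from $\pi_1(T_1\cup T_2)\hookrightarrow\pi_1(M)$ only shows that orbits in distinct classes cannot cancel in the Floer differential; it does not show that the chain complex in a given class is nonempty. Since the total Floer homology $\mathrm{HF}_*(H)_\alpha$ is \emph{trivial} in every nontrivial class (as the paper notes), there is no homological forcing of existence available, so ``a generator attached to each reduced word'' is precisely the assertion that must be proved, not a starting point. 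Your proposed fix---a Hofer-controlled perturbation aligning $f_n$ with a hyperbolic model---does not repair this: perturbing $f_n$ changes the map whose $k$-periodic points the proposition counts, and no semi-conjugacy to a subshift is actually constructed.

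The paper closes this gap by a direct dynamical argument, Lemma \ref{persig}: in the plumbing coordinates, for each admissible homotopy class $\gamma=\alpha_1*\beta_1*\cdots*\alpha_m*\beta_m$ (with $\tfrac{N\epsilon}{4}\le|\alpha_i|,|\beta_i|\le\tfrac{N\epsilon}{3}$) and each choice of signs $(\sigma_i,\xi_i)$ there is \emph{exactly one} fixed point, obtained via an expansion estimate and the Banach fixed point theorem applied to $f^{-1}$ on an explicit product of small balls. This simultaneously gives the exponential count (at least $2^{2k}$ fixed points per class, over exponentially many classes, versus your $3^{k-1}$ reduced-word count which is never substantiated) and the localization $f_n^i(p_n)\in B(\tfrac{\alpha_i(n)}{n},\tfrac1n)\times B(-\tfrac{\beta_{i-1}(n)}{n},\tfrac1n)$ needed for density. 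Note also that the correct $U$ is not the whole Weinstein neighborhood but the annular region $\{\tfrac{\epsilon}{4}<|v|,|x|<\tfrac{\epsilon}{3}\}$ in $T^*T_1$ where these fixed points actually live; the ``grid'' being refined is the set of rational momentum/position vectors $\alpha_i/n$, $-\beta_{i-1}/n$, not cells on $T_1\cup T_2$ itself. If you replace your symbolic/Floer existence heuristic with an appeal to Lemma \ref{persig} (or reprove its contraction argument), the rest of your outline goes through.
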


Beyond local geometry, we also investigate the large-scale or coarse geometry of \(\mathrm{Ham}(M,\omega)\) via its asymptotic cone, a tool introduced by Gromov \cite{Gromov93} to study metric spaces from a "zoomed-out" perspective. The asymptotic cone captures the group's behavior at infinity and inherits a group structure when the metric is bi-invariant.

Informally, the asymptotic cone of a metric space $(Y, d)$ with a base point $x_0$ is obtained by taking a rescaling limit, viewing the space from increasingly distant points. More precisely, fix a non-principal ultrafilter $\mathcal{U}$ on $\N$. The asymptotic cone $\mathrm{Cone}_\mathcal{U}(Y, d)$ is defined as the set of equivalence classes of sequences $(x_j)_{j=1}^\infty \subset Y$ satisfying $\limsup_{j \to \infty} \frac{d(x_j, x_0)}{j} < \infty$, where two sequences are equivalent, $(x_j) \sim (y_j)$, if and only if $\lim_{\mathcal{U}} \frac{d(x_j, y_j)}{j} = 0$. The metric on the cone is given by $d_\mathcal{U}([(x_j)], [(y_j)]) = \lim_{\mathcal{U}} \frac{d(x_j, y_j)}{j}$.

If $Y = G$ is a group equipped with a bi-invariant metric, then the asymptotic cone $\mathrm{Cone}_\mathcal{U}(G, d)$ inherits a group structure (see Proposition 3.3 of \cite{CZ11}), with the group operation defined entrywise on representative sequences.

Our second main theorem establishes that the free group of two generators \(\mathbb{F}_2\) embeds faithfully into the asymptotic cone of \(\mathrm{Ham}(M,\omega)\) for manifolds satisfying condition (*).

\begin{thm}\label{F2incone}
    Let $(M,\omega)$ be a symplectic manifold satisfying condition (*). Then for any non-principal ultrafilter $\mathcal{U}$ on $\N$, there exists a faithful homomorphism
    \[
    \mathbb{F}_2 \to \mathrm{Cone}_{\mathcal{U}}(\mathrm{Ham}(M,\omega), d_H).
    \]
    That is, the free group of two generators embeds into the asymptotic cone.
\end{thm}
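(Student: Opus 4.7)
The plan is to construct two Hamiltonian diffeomorphisms $\phi, \psi \in \mathrm{Ham}(M,\omega)$ supported in Weinstein neighborhoods of $T_1, T_2$ and show that sending the free generators $x, y$ of $\mathbb{F}_2$ to the cone classes $\tilde{\phi} := [(\phi^j)_{j \geq 1}]$ and $\tilde{\psi} := [(\psi^j)_{j \geq 1}]$ extends to an injective homomorphism. First, using the Weinstein tubular neighborhood theorem, I would identify a neighborhood of $T_i$ with a neighborhood of the zero section in $T^*\mathbb{T}^n$, with canonical coordinates $(q,p) \in \mathbb{T}^n \times \mathbb{R}^n$. Choosing the generators $a \in \pi_1(T_1)$, $b \in \pi_1(T_2)$ from condition (*) to lie along the $q_1$-axis, I would take autonomous Hamiltonians $H_i = \rho(|p|) \cdot p_1$ (a cutoff times the first momentum coordinate), so that the time-one maps $\phi = \phi_{H_1}^1$ and $\psi = \phi_{H_2}^1$ restrict along their zero sections to translations along $a$ and $b$ respectively. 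These are the higher-dimensional analogs of the Dehn-twist Hamiltonians used by Polterovich--Shelukhin on surfaces in \cite{PS}.

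Because $\|\phi^j\|_H \leq j\|\phi\|_H$, the sequences $(\phi^j)_j$ and $(\psi^j)_j$ represent elements in the asymptotic cone. The group structure on the cone (Proposition 3.3 of \cite{CZ11}) then extends the assignment $x \mapsto \tilde{\phi}$, $y \mapsto \tilde{\psi}$ to a homomorphism $\rho: \mathbb{F}_2 \to \mathrm{Cone}_{\mathcal{U}}(\mathrm{Ham}(M,\omega), d_H)$, with $\rho(w)$ represented by $\bigl(w(\phi^j, \psi^j)\bigr)_j$. Injectivity of $\rho$ is thus equivalent to the lower bound $\lim_{\mathcal{U}} \|w(\phi^j, \psi^j)\|_H / j > 0$ for every nontrivial reduced word $w \in \mathbb{F}_2$, and this is what I need to prove.

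To establish this lower bound I would use Lagrangian spectral invariants $\ell_\alpha: \mathrm{Ham}(M,\omega) \to \mathbb{R}$ associated to the pair $(T_1, T_2)$ and indexed by homotopy classes $\alpha$ in the image of $\pi_1(T_1 \cup T_2) = \mathbb{Z}^n * \mathbb{Z}^n \hookrightarrow \pi_1(M)$. Under condition (*), the atoroidality on $\langle a, b\rangle$ together with $\pi_1$-injectivity makes the action functional single-valued on these classes and produces the Hofer bound $|\ell_\alpha(\phi) - \ell_\beta(\phi)| \leq 2\|\phi\|_H$. Lifting to the universal cover $\widetilde{M}$, the preferred lift of $\phi^j$ translates a chosen lift of $T_1$ by $j$ units along $a$ (and analogously for $\psi^j$ along $b$), so the intersection $T_1 \cap w(\phi^j, \psi^j)(T_2)$ contains points in distinct $\pi_1(M)$-classes corresponding to all nontrivial prefixes of the free-product element $w(a^j, b^j) \in \mathbb{Z}^n * \mathbb{Z}^n$. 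The actions of these intersection points, computed as cotangent integrals in the Weinstein model, will differ linearly in $j$, and the spectral-Hofer inequality will yield $\|w(\phi^j, \psi^j)\|_H \geq c_w \cdot j$ for a constant $c_w > 0$ depending on $w$ but not on $j$.

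The main obstacle will be the Floer-theoretic machinery of the third paragraph: (i) defining Lagrangian spectral invariants for the non-monotone tori $T_1, T_2$, using the atoroidal hypothesis on $\langle a, b\rangle$ to tame Novikov contributions and obtain well-defined action differences across the relevant classes; (ii) verifying that intersection points in distinct $\pi_1(M)$-classes survive in the Floer complex, so each class contributes a genuinely distinct spectral value; and (iii) computing the action spread in terms of the ping-pong translation on $\widetilde{M}$, with the $\pi_1$-injectivity ensuring that the free product structure of $\langle a, b\rangle$ is transported faithfully into $\pi_1(M)$. This generalizes the approach of \cite{PS}, with Lagrangian tori and condition (*) playing the higher-dimensional role of simple closed curves on high-genus surfaces.
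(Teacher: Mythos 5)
Your overall architecture matches the paper's: the word maps $w(\phi^j,\psi^j)$ built from twists supported near $T_1$ and $T_2$ are exactly the paper's $\tau(N,w)$ of Equation \ref{taunw} (with a radial Hamiltonian $h(|v|)$ in place of your $\rho(|p|)p_1$), and you correctly reduce injectivity to the linear lower bound $\lim_{\mathcal U}\|w(\phi^j,\psi^j)\|_H/j>0$. The gap is in the mechanism you propose for that bound. A Lagrangian spectral invariant $\ell_\alpha$ in a nontrivial class $\alpha$ is attached to the filtered homology $\mathrm{HF}(T_1,w(\phi^j,\psi^j)(T_2))_\alpha$, which is isomorphic to $\mathrm{HF}(T_1,T_2)_\alpha=0$ because $T_1\cap T_2$ is a single transverse point lying in the trivial class. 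So there is no nonzero homology class whose spectral value you can track, no inequality of the form $|\ell_\alpha(\phi)-\ell_\beta(\phi)|\le 2\|\phi\|_H$ to invoke, and the entire content of the argument is pushed into your item (ii): you must rule out that the intersection points in class $\alpha$ cancel in pairs within a small action window. That requires (a) an exact enumeration of generators in a well-chosen class, (b) their actions to leading order in $j$, and (c) their Maslov/Conley--Zehnder indices, so as to exhibit a generator of extremal index whose potential differentials all lie at action distance at least $\delta j$ away. Your outline defers all three to ``the main obstacle'' without a plan for resolving them, and it is exactly there that the theorem is proved or not.

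The paper carries this out in the Hamiltonian (fixed-point) Floer setting rather than the Lagrangian one: atoroidality makes the action and index well defined on a nontrivial free loop class $\gamma(N,w)$; Lemma \ref{persig} shows $\tau(N,w)$ has exactly $2^{2m}$ fixed points there via a contraction/degree argument; Proposition \ref{pact} computes their actions; Section \ref{cz} computes their Conley--Zehnder indices via Robbin--Salamon concatenation formulas; and Proposition \ref{deltaA} extracts a boundary-depth bound $\ge\delta N$, which Proposition \ref{bounddepth} converts into $\|\tau(N,w)\|_H\ge\delta N$. Finally, you omit two reductions the paper needs: conjugation-invariance of the Hofer norm to pass to words of alternating form $a^{k_1}b^{k_2}\cdots a^{k_{2m-1}}b^{k_{2m}}$, and a separate commutator/bi-invariance trick for words conjugate to $a^k$ or $b^k$, for which there is no ping-pong structure and the fixed-point analysis does not directly apply.
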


This reveals a highly non-abelian and rich large-scale structure of the Hamiltonian group. There are known results on embedding abelian groups into Hamiltonian group, while the embedding of free groups is a more recent development. In particular, work by D. Álvarez-Gavela et al. \cite{Junzhangetal} demonstrated an embedding of $\mathbb{F}_2$ into the Hamiltonian group of certain higher-genus surfaces and their products. Chor \cite{Chor} further extended their result to surfaces of genus $2$ and $3$. Our result contributes to this line of inquiry by establishing such embeddings in the asymptotic cone for a new class of higher-dimensional symplectic manifolds.

The conditions on the manifold in Theorems \ref{nokthroot} and \ref{F2incone} can be generalized in the following ways.

\begin{rmk}\label{cond**}
    In fact, for the purposes of Theorems \ref{nokthroot} and \ref{F2incone}, the last two items in condition (*) can be replaced with the following condition.
    
     \begin{itemize}

        \item   There exists curves $a\subset T_1$ and $b\subset T_2$, such that if $a^{k_1}b^{l_1}\cdots a^{k_m}b^{l_m}=g_1h_1\cdots g_mh_m$, where $k_i,l_j\in \Z\backslash\{ 0\}$ and $g_i\in \pi_1(T_1),h_j\in \pi_1(T_2)$ for any $1\le i,j\le m$, then $g_i=a^{k_i}$ and $h_j=b^{l_j}$ for any $1\le i,j\le m$, we shall say that any word in $\langle a,b\rangle$ is in unique reduced form in $\langle \pi_1(T_1), \pi_1(T_2)\rangle$.
        
        \item  $M$ is $\alpha$-atoroidal for any $\alpha\in 
        \langle a,b\rangle$.
    \end{itemize} 

   We say that a manifold $M$ satisfies condition (**), if it satisfies these conditions above and the first two items in condition (*). It is clear from definition that condition (*) implies condition (**).
\end{rmk}

\begin{rmk}
    The conclusions of Theorems \ref{nokthroot} and \ref{F2incone} also apply to product manifolds $M = M_1 \times M_2$, where $M_1$ is a closed manifold that satisfies condition (**) and $M_2$ is any closed symplectically aspherical manifold. 
    
    Indeed, the Hamiltonian diffeomorphism $\tau(N,w)$ can be defined on $M_1$ as in Section \ref{ltm} and then extended as $\tau(N,w) \times \mathrm{id}_{M_2}$ to the product. The necessary Floer–theoretic invariants for the product map are determined by those on $M_1$ via the Künneth formula for filtered Floer homology; consequently, the estimates on action differences and boundary depths persist. Thus, the same proofs yield analogous results for $M_1 \times M_2$.
\end{rmk}


\subsection{Organization and outline of the proof}\label{outl}

The core of our argument is the construction of a sequence of Hamiltonian diffeomorphisms $\{f_n\}$ that diverges from the set of $k^{\text{th}}$ powers, $\mathrm{Ham}^k(M, \omega)$. This is achieved by analyzing the Floer-theoretic properties of a family of linked twist maps on $M$, which is akin to the Hamiltonian eggbeater maps studied in \cite{PS} and \cite{Junzhangetal}.

In a manifold satisfying condition (*), we may identify a neighborhood of $T_1\cup T_2$ with a neighborhood of the zero section of the plumbing domain $P_\psi(T_1,T_2)$. Our specific construction on $P_\psi(T_1,T_2)$ is as follows.  Let $\tau_1$ and $\tau_2$ be Hamiltonian diffeomorphisms that, near the zero section of their respective tori, coincide with a time-1 map of a geodesic flow, and are the identity away from it.  We then define Hamiltonian diffeomorphisms $f_N$ and $\tau(N,w)$ for any word $w$ as a finite composition of twists $\tau_1^N$ and $\tau_2^N$, see Equation \ref{taunw}.

The proof proceeds in three main steps:
\begin{itemize}
    \item Periodic Orbit Analysis: We identify specific free homotopy classes $\gamma_N$ in which $f_N$ has only finitely many fixed points and estimate the action functional of these periodic orbits. The purpose of condition (*) is to make sure the periodic points of $f_N$ are in different homotopy classes.

    \item Index and Boundary Depth Calculation: We compute the Conley-Zehnder indices of these periodic orbits. The relationship between the index and the action, combined with the structure of the Floer differential, allows us to prove a lower bound $\delta N > 0$ on the action difference $\mathcal{A}_{H}(q) - \mathcal{A}_{H}(p)$ for certain $p\in M$ and any $q\in \partial^{-1}p$. This implies a lower bound $\delta N$ on the length of bars.
    
    \item Applying the relation between action difference and Hofer norm, given in Propositions \ref{bounddepth} and \ref{dishamk}, we force the Hofer distance from $f_N$ to any $k^{\text{th}}$ power to go to infinity which proves Theorem \ref{nokthroot}. We also force linear growth of the Hofer norm of $\tau(N,w)$, which shows that the sequence $\tau(N,w)$ is a non-trivial element of the asymptotic cone for any $w\in \mathbb F_2$, which proves Theorem \ref{F2incone}.
\end{itemize}

The paper is organized to reflect this strategy. In Section \ref{cloconst}, we give some constructions of closed symplectic manifolds satisfying condition (*). In Section \ref{prelim}, we construct the surface bundle examples and recall the necessary Floer theory, focusing on the action functional, filtered homology, and the relation between action differences and the Hofer norm.
 In Section \ref{ltm}, we construct the Hamiltonian diffeomorphisms $\tau(N,w)$, analyze their periodic points, and estimate their actions.
 In Section \ref{cz}, we compute the Conley-Zehnder indices of these periodic points and prove the required lower bound on the action difference.

\subsection{Acknowledgement}
The author is deeply grateful to Leonid Polterovich for his invaluable guidance throughout this work. His introduction to these problems and our many productive discussions were essential to this research. The author also thanks Robert Gompf for very helpful communications that led to the examples given by Luttinger surgery and symplectic sum, and thanks Jinxin Xue and Egor Shelukhin for very useful conversations and comments.

\section{Constructions of closed examples}\label{cloconst}

In this section, we provide more details for constructing manifolds that satisfies condition (*) and (**). We shall construct some surface bundle examples and use Luttinger surgery and symplectic sum to give more examples.

\subsection{Surface bundle examples}\label{surfbund}

  We consider a surface bundle $M=S_1\ltimes_\psi S_2$ where $\psi:\pi_1(S_1)\to Symp(S_2)$ is the monodromy group. We take the symplectic form $\omega=\omega_1+k\omega_2$ in $M$ where $\omega_1,\omega_2$ are the volume forms on $S_1$ and $S_2$ and $k\in \R_+$. Suppose that the base and fiber are surfaces of genus $\ge 2$.

  
  Take $\psi$ such that there exist simple closed curves $a_0,b_0\in \pi_1(S_1) , c,d\in \pi_1(S_2)$, $|a_0\pitchfork b_0|=|c\pitchfork d|=1$, with $\psi(a_0)c=c$, $\psi(b_0)d=d$. Then $T_1$ generated by $a_0,c$ and $T_2$ generated by $b_0,d$ are two embedded Lagrangian tori in $M$, transversely intersecting at one point. We provide two constructions so that $M$ satisfies condition (*).
  
  For example, we may take $a_0,b_0\subset S_1$ that generate a free group in $\pi_1(S_1)$ and $c,d,e,f\subset S_2$ such that $\langle c,d,e\rangle , \langle c,d,f\rangle$ generate free subgroups of in $\pi_1(S_2)$ respectively, where $|c\cap f|=0,|d\cap e|=0$. Take $\psi$ such that $\psi(a_0)=\tau_e,\psi(b_0)=\tau_f $, where $\tau_\gamma$ is the Dehn twist around $\gamma$. Then the tori $T_1$ generated by $a_0,c$ and $T_2$ generated by $b_0,d$ generate a free subgroup $(\Z^2)*(\Z^2)\subset \pi_1(M)$ in $M$ and condition (*) is naturally satisfied.
  
  A more general example is to take $\psi(a_0)$ as pseudo-Anosov on $S_2-\{c\}$; and $\psi(b_0)$ as pseudo-Anosov on $S_2-\{d\}$, in this case we construct surface bundles that satisfy condition (**) in Remark \ref{cond**}.

  \begin{lem}\label{red-pseu}
      Suppose $a_0$ and $b_0$ generate a free product in $\pi_1(S_1)$. Let $\psi_0(a_0)$ be a reducible symplectomorphism on $S_2$, fixing the curve $c$ that is pseudo-Anosov in $S_2-\{c\}$, similarly $\psi_0(b_0)\in Symp(S_2)$ fixes $d$ and is pseudo-Anosov in $S_2-\{d\}$. Then there exists $N\in \mathbb N$, such that for any $|k|>N$, and $\psi:\pi_1(S_1)\to Symp(S_2)$ where $\psi(a_0)=\psi_0^k(a_0),\psi(b_0)=\psi^k_0(b_0)$, then $M=S_1\ltimes_{\psi}S_2$ is a manifold that satisfies condition (**). 
  \end{lem}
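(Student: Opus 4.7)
My plan is to verify the four components of condition (**) in turn. The first two items are essentially automatic from the bundle structure, while the third and fourth are the technical core and both rest on ping-pong arguments enabled by the large-$k$ hypothesis.

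First I would verify the easy parts. Because $S_1$ and $S_2$ are closed surfaces of genus at least two they are aspherical, so the long exact sequence of homotopy groups for the fibration $S_2 \to M \to S_1$ gives $\pi_k(M) = 0$ for $k \geq 2$; hence $M$ is aspherical and in particular symplectically aspherical. Since every iterate $\psi_0^k(a_0)$ still fixes $c$ and every iterate $\psi_0^k(b_0)$ still fixes $d$, the tori $T_1$ (generated by $a_0$ and $c$) and $T_2$ (generated by $b_0$ and $d$) remain well-defined Lagrangian submanifolds for every $k$ and meet transversely at the single point corresponding to $a_0 \pitchfork b_0$ and $c \pitchfork d$.

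For the unique reduced form condition, suppose
\[
a^{k_1} b^{l_1} \cdots a^{k_m} b^{l_m} = g_1 h_1 \cdots g_m h_m
\]
in $\pi_1(M)$ with $g_i \in \pi_1(T_1)$ and $h_j \in \pi_1(T_2)$. Projecting to $\pi_1(S_1)$ via the bundle map and using that $a_0, b_0$ generate a free product there, uniqueness of reduced form forces $g_i = a^{k_i} c^{\alpha_i}$ and $h_j = b^{l_j} d^{\beta_j}$ for some integers $\alpha_i, \beta_j$. Using the commutations $ac=ca$, $bd=db$ and the conjugation rule $x\gamma x^{-1} = \psi_x(\gamma)$ for $x \in \{a,b\}$ and $\gamma \in \pi_1(S_2)$, I would iteratively pull every fiber element to the right of the equation to arrive at an identity
\[
\Phi_1(c)^{\alpha_1}\, \Psi_1(d)^{\beta_1} \cdots \Phi_m(c)^{\alpha_m}\, \Psi_m(d)^{\beta_m} = 1
\]
in $\pi_1(S_2)$, where each $\Phi_i, \Psi_j$ is an explicit composition of $\psi_a^{\pm 1} = \psi_0^k(a_0)_*^{\pm 1}$ and $\psi_b^{\pm 1} = \psi_0^k(b_0)_*^{\pm 1}$ determined by the position. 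The decisive step is a ping-pong argument on the curve complex of $S_2$ (or the Gromov boundary of $\pi_1(S_2)$): for $k$ sufficiently large, $\psi_a$ and $\psi_b$ act loxodromically on the curve complexes of $S_2 \setminus \{c\}$ and $S_2 \setminus \{d\}$ with arbitrarily large translation length, their attracting/repelling laminations become well-separated, and one can choose pairwise disjoint ping-pong neighborhoods so that the curves $\Phi_i(c)$, $\Psi_j(d)$ lie in distinct regions. Standard ping-pong then precludes any nontrivial relation and forces $\alpha_i = \beta_j = 0$.

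For the $\alpha$-atoroidal condition, let $\alpha \in \langle a,b\rangle$ be nontrivial and $f\colon T^2 \to M$ sweep out $\alpha$. By asphericity of $M$, $f$ is determined up to homotopy by $f_*\colon \mathbb{Z}^2 \to \pi_1(M)$ with image $\langle \alpha, \beta\rangle$ where $[\alpha,\beta] = 1$. Projecting $\alpha$ to $\pi_1(S_1)$: since $\bar\alpha$ is a nontrivial element of the free subgroup $\langle a_0, b_0\rangle$ of the hyperbolic surface group $\pi_1(S_1)$ its centralizer is cyclic, so $\beta = \alpha^n \gamma$ for some $n \in \mathbb{Z}$ and $\gamma \in \pi_1(S_2)$ fixed by the conjugation action of $\alpha$. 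The conjugation corresponds to a word $\psi_a^{k_1}\psi_b^{l_1}\cdots$ in $\mathrm{MCG}(S_2)$, and a ping-pong argument in $\mathrm{MCG}(S_2)$ shows that for $k$ large enough every nontrivial element of $\langle \psi_a, \psi_b\rangle$ is either pseudo-Anosov on all of $S_2$ (with trivial $\pi_1$-fixed subgroup) or a pure power of $\psi_a$ or $\psi_b$ (with $\pi_1$-fixed subgroup $\langle c\rangle$ or $\langle d\rangle$). In the first case $\gamma = 1$, so $\langle\alpha,\beta\rangle$ is cyclic and $f$ factors through $S^1$ up to homotopy, giving $\int_{T^2} f^*\omega = 0 = \int_{T^2} f^*c_1$. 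In the second case $\langle \alpha, \beta\rangle$ is contained in $\pi_1(T_1)$ or $\pi_1(T_2)$; since the $T_i$ are Lagrangian one has $\omega|_{T_i} = 0$ and $c_1(TM)|_{T_i} = 0$, and both integrals again vanish.

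The main obstacle is making the two ping-pong arguments precise: one must verify that for sufficiently large $k$ the iterates $\psi_0^k(a_0)$ and $\psi_0^k(b_0)$ exhibit the dynamical separation on the curve complex required both to kill every $\alpha_i, \beta_j$ in the rewritten equation in $\pi_1(S_2)$ and to control the fixed subgroup of arbitrary nontrivial words in $\mathrm{MCG}(S_2)$.
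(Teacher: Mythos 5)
Your overall architecture matches the paper's for the key item: project the putative relation to $\pi_1(S_1)$ to force $g_i=a^{k_i}c^{\alpha_i}$, $h_j=b^{l_j}d^{\beta_j}$, and then run a ping-pong argument at infinity to kill the fiber exponents. (The paper's proof, for what it is worth, only treats this unique-reduced-form condition; your verification of asphericity, the Lagrangian tori, and atoroidality is a sensible supplement, modulo the slip that the centralizer of $\bar\alpha$ in $\pi_1(S_1)$ is generated by a \emph{root} of $\bar\alpha$, so $\beta=\alpha^n\gamma$ is not quite forced.) However, the step you yourself flag as the main obstacle is a genuine gap, and the way you have set it up makes it harder than necessary. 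After pushing all fiber elements to one side you must show that $\Phi_1(c)^{\alpha_1}\Psi_1(d)^{\beta_1}\cdots=1$ in $\pi_1(S_2)$ forces all exponents to vanish. This is a statement about hyperbolic isometries of $\mathbb{H}$ with \emph{arbitrary} nonzero exponents $\alpha_i,\beta_j$, not large powers, so "standard ping-pong" does not apply without first producing a genuine Schottky configuration for the fixed-point pairs of all the conjugates $\Phi_i(c)$, $\Psi_j(d)$ on $\partial\mathbb{H}$ — and those fixed-point pairs depend on the conjugators $\Phi_i,\Psi_j$, which are long words in the monodromies. Nothing in the proposal controls them; already ruling out $\Phi_1(c)^{\alpha_1}=\Psi_1(d)^{-\beta_1}$ (shared axis) requires work. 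Also, large $k$ does not make the "attracting/repelling laminations become well-separated": the invariant laminations of $\psi_0^k(a_0)$ are independent of $k$. What large $k$ buys is stronger nesting of fixed ping-pong sets.

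The paper closes exactly this gap by \emph{not} reducing to a relation purely inside $\pi_1(S_2)$. It defines a single action $\bar\rho$ of $\langle\pi_1(T_1),\pi_1(T_2)\rangle$ on $\partial\mathbb{H}$, sending $a,b$ to lifts of the pseudo-Anosov monodromies (each with exactly four boundary fixed points: two parabolic ones from the lift of the reducing curve, plus an attracting and a repelling point) and $c,d$ to deck transformations, and then shows directly that $\bar\rho(w_1^{-1}w_2)$ maps a region $X_a$ into a disjoint region $X_b$. The crucial device, which has no counterpart in your sketch and is precisely what absorbs the arbitrary exponents $c^{\alpha_i}$, $d^{\beta_j}$, is to split the large power: writing $\bar\rho(\beta^l d^t)=\rho(\beta)^{l(k-[N/2])}\rho(d)^{t}\rho(\beta)^{l[N/2]}$ (using that the lift of $\psi_0(b_0)$ commutes with the deck transformation of $d$), so that half of the power first pushes the set deep into $X_b$, the deck transformation $\rho(d)^t$ cannot move it out of the controlled region, and the remaining half pushes it back in. To complete your argument you would need either this sandwiching trick or an explicit uniform Schottky estimate for the conjugated axes; as written, the decisive step would fail for words containing large deck-transformation exponents.
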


  \begin{proof}[Proof of Lemma \ref{red-pseu}]
       We consider the induced action of $\psi_0(a),\psi_0(b),c,d$ on the boundary $\partial \mathbb H$ of the universal cover $\mathbb H$ of $S_2$ and use the Ping-Pong lemma to show that liftings $\alpha,\beta$ of $a,b$ generate a free subgroup in $\langle\pi_1(T_1),\pi_1(T_2)\rangle$ such that its words are all in unique reduced form.
       
       We denote by $\rho: \pi_1(T_1)*\pi_1(T_2)\to \mathrm{Homeo}(\partial \mathbb H)$ where $\rho(\alpha),\rho(\beta)$ are induced by the liftings of $\psi_0(a)$ and $\psi_0(b)$ to $\mathbb H$, and $\rho(c)$ and $\rho(d)$ are induced by the deck transformation on $\mathbb H$. Since $\psi_0(a)$ is pseudo-Anosov on $S_2-\{c\}$, we see that $\rho(a)$ act on $\partial \mathbb H$ as a homeomorphism that has four fixed points: two parabolic fixed points corresponding to the two endpoints of the lifting of $c$, a contracting fixed point which is the forward endpoint of the lifting of the stable foliation of $\psi_0(a)$, and an expelling fixed point which is the backward endpoint of the lifting of the unstable foliation of $\psi_0(a)$. 
       
       Let $X_a$ be any small open neighborhood of the four fixed points of $\rho(\alpha)$, then since $\psi_0(a)$ is pseudo-Anosov, we have $\rho(\alpha)^k(X_a^c)\subset X_a$ when $|k|$ is sufficiently large. (Here we denote by $X_a^c=\partial \mathbb H-X_a$.) Similarly, let $X_b$ be any small open neighborhood of the four fixed points of $\rho(\beta)$, then $\rho(\beta)^l(X_b)\subset X_b^c$ when $|l|$ is sufficiently large. Furthermore, the action of $\rho(c)$ and $\rho(d)$ on $\partial \mathbb H$ are induced by hyperbolic isometry on $M$, each having two contracting and expelling fixed points which are endpoints of the lifting of $c$ and $d$. We may choose $X_a$ and $X_b$ to be small enough such that $X_a\cap X_b=\varnothing$, and $c^k(X_b)\cap X_b=\varnothing, d^k(X_a)\cap X_a=\varnothing$ for any $k\neq 0$.
       
       We pick $N$ large enough, so that $\rho(\beta)^l(X_b^c)\subset X_b$ and $\rho(\alpha)^l(X_a^c)\subset X_a$ if $l\ge [\frac{N}{2}]$. And we now take $\bar\rho: \langle\pi_1(T_1),\pi_1(T_2)\rangle\to \mathrm{Homeo}(\partial \mathbb H)$, given by $\bar\rho(\alpha)=\rho(\alpha)^k=\psi^k(a)$, $\bar\rho(\alpha)=\rho(\beta)^k=\psi^k(b)$, $\bar\rho(c)=\rho(c)$, $\bar\rho(d)=\rho(d)$. By since $\pi_1(M)=\pi_1(S_1)\ltimes_{\psi_*}\pi_1(S_2)$, $\bar\rho$ is a group homomorphism.
       
       If the subgroup generated by $\alpha,\beta$ is not uniquely reduced in $\langle\pi_1(T_1),\pi_1(T_2)\rangle$, we may pick a shortest even word $$w_1=\alpha^{k_1}\beta^{l_1}\cdots \alpha^{k_m}\beta^{l_m}\in \pi_1(M), k_i,l_j\neq 0,$$ such that $w_1=w_2\in \langle \pi_1(T_1),\pi_1(T_2)\rangle$ and $w_1\neq w_2$ as a word in $\pi_1(T_1)*\pi_1(T_2)$. Since their projection to $\pi_1(S_1)$ must are equal, $w_2$ must be of the form $$w_2=\alpha^{k_1}c^{r_1}\beta^{l_1}d^{t_1}\cdots \alpha^{k_m}c^{r_m}\beta^{l_m}d^{t_m}$$ and since the word is shortest, we have $r_1\neq 0$ or $t_m\neq 0$. Without loss of generality we suppose $r_1\neq 0$ and consider $$w=w_1^{-1}w_2=\beta^{-l_m}\alpha^{-k_m}\cdots \beta^{-l_1}c^{r_1}\beta^{l_1}d^{t_1}\cdots \alpha^{k_m}c^{r_m}\beta^{l_m}d^{t_m}.$$ 
       
    Then we have $\bar\rho(w)(X_a)\subset X_b$. This is because $$\bar\rho(\beta^{l}d^{t})(X_a)=\rho^{k-[N/2]}(\beta^l)\rho(d^{t})\rho^{[N/2]}(\beta^{l})(X_b^c)\subset X_b$$ for any $l\neq 0, t\in \Z$, and similarly $\rho(\alpha^{k}c^{r})(X_b)\subset X_a$ for any $k\neq 0,r\in\Z$. Therefore, we have $$\bar\rho(\beta^{l_1}d^{t_1}\cdots \alpha^{k_m}c^{r_m}\beta^{l_m}d^{t_m})(X_a)\subset X_b,$$ and so $$\bar\rho(c^{r_1}\beta^{l_1}d^{t_1}\cdots \alpha^{k_m}c^{r_m}\beta^{l_m}d^{t_m})\subset X_b^c,$$ which implies, $$\bar\rho(\beta^{-l_m}\alpha^{-k_m}\cdots \beta^{-l_1}c^{r_1}\beta^{l_1}d^{t_1}\cdots \alpha^{k_m}c^{r_m}\beta^{l_m}d^{t_m})\subset X_b.$$ Therefore, $\bar\rho(w)\neq id$, which implies that $w\neq id$. In the case that $t_m\neq 0$, we simply consider $w_2w_1^{-1}$ instead and we may show that $\bar\rho(w_2w_1^{-1})(X_b)\subset X_a$. This shows that every word generated by $\alpha,\beta$ is uniquely reduced in the subgroup. 
  \end{proof}

\subsection{Luttinger surgery}\label{lutsurg}
Let $(M,\omega)$ be a $4$-dimensional symplectic manifold, let $T$ be a Lagrangian tori in $M$ with a closed, homotopically nontrivial curve $\gamma\subset T$, with a fixed co-orientation. We identify a neighborhood $N(T)$ of $T$ with a neighborhood of the zero section in $T^* T\simeq \mathbb T^2\times \R^2$, such that $\gamma$ is identified with $S^1\times\{pt\}\subset \mathbb T^2$, and the co-orientation is identified with the natural orientation of the second coordinate. 

We choose $r>0$ small enough, so that $U_r=\mathbb T^2\times [-r,r]\times [-r,r]$ is in the image of $N(T)$. Pick coordinates $(x,y,z,t)\in \mathbb T^2\times [-r,r]\times [-r,r]=\R/\Z\times \R/\Z\times [-r,r]\times [-r,r]$. We take $\phi_k:U_r-U_{\frac{r}{2}}\to U_r-U_{\frac{r}{2}}$ to be given by $\phi_k(x_1,x_2,v_1,v_2)=(x_1+k\chi(v_1),x_2,v_1,v_2)  $, where $\chi:[-r,r]\to [0,1]$ is a non-decreasing step function such that $$\chi(t)=\left\{\begin{array}{ccc}
    0 & &t<-\frac{r}{3} \\
   1  & & t>\frac{r}{3}
\end{array}\right.$$ and $\chi(t)+\chi(-t)=1$.

The Luttinger surgery of $M$ around $T,\gamma$ is defined as follows. This was introduced by Luttinger \cite{Lutt}. See also Section 2.1 of \cite{ADK}.

\begin{defi}
    The Luttinger surgery $M(T,\gamma,k)$ is the manifold $M(T,\gamma,k)=(M-U_{r/2})\cup_{\phi_k}U_r$, where $U_r$ is identified with a neighborhood of $T$. In other words,  $M(T,\gamma,k)$ is given by cutting out a neighborhood of $T$ and gluing back a copy of $U_r$ using the symplectomorphism $\phi_k$ around their boundaries.
\end{defi}

Let $\mu$ be the loop bounding $\partial U_r\simeq D\times \mathbb T^2$, then we see that $\pi_1(M(T,\gamma,k))=\pi_1(M-U_{r/2})/\langle\mu\gamma^{-k}\rangle$. 

Now back to our construction. We take a surface bundle example $M^\prime=S_1^\prime \ltimes_{\psi^\prime} S_2^\prime$ that satisfies condition (*) . We attach a handle to $S_1^\prime$ to gain a surface $S_1$ of one more genus, where $e,e^\prime$ are the new homotopy classes, so that $S_1/(e\cup e^\prime)\simeq S_1^\prime$. Similarly, attach a handle to $S_2^\prime$ to get a surface $S_2$, with homotopy classes $f,f^\prime$.
We take $M=S_1\ltimes_\psi S_2$ where $\psi(e)=\psi(e^\prime)=id$, and $\psi=\psi^\prime$ for homotopy classes in $\pi_1(S_1^\prime)$. We choose the torus $T$ to be generated by $e,f$ in $M$, and we choose $\gamma$ to be a lifting of $e^l$ in $T$, where $l\neq 0$. 

\begin{prop}
    Let $M^\prime$ be a surface bundle satisfying condition (*) in Definition \ref{cond*} and let $M$ be the surface bundle constructed from $M^\prime$ by attaching handles as given above, and let $T$ be a tori generated by curves in the attached handles and let $\gamma\subset T$ be a curve whose projection is not homotopically trivial. Then the Luttinger surgery $M(T,\gamma,k)$ for any $k\in \Z$ also satisfies condition (*) in Definition \ref{cond*} .
\end{prop}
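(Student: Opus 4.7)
The plan is to verify each of the four items in Definition \ref{cond*} for the manifold $M(T, \gamma, k)$. The key geometric observation is that by construction the surgery region $U_r$ is a tubular neighborhood of the torus $T$ generated by the new handle curves $e, f$, so $U_r$ lies entirely inside the attached handles and is disjoint from the Lagrangians $T_1, T_2 \subset M'$ and the curves $a_0, b_0, c, d$ that generate them.

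The first two items are direct. Closedness of $M(T, \gamma, k)$ is immediate from the definition, and a symplectic form $\omega'$ exists by Luttinger's theorem. The manifold $M$ is aspherical since it is a surface bundle with base and fiber of genus $\ge 2$, and Luttinger surgery preserves asphericity: the Van Kampen decomposition $M(T, \gamma, k) = (M - U_{r/2}) \cup_{\phi_k} U_r$ glues two aspherical pieces along the aspherical interface $T^2 \times S^1$, so $\pi_2(M(T, \gamma, k)) = 0$ and symplectic asphericity follows. The tori $T_1, T_2$ sit unchanged inside $M' \subset M - U_{r/2} \subset M(T, \gamma, k)$, still Lagrangian and transversely intersecting at one point, giving item 2.

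For item 3, I construct a retraction $r: M(T, \gamma, k) \to M'$ that induces a left inverse of the inclusion $i_*: \pi_1(M') \hookrightarrow \pi_1(M(T, \gamma, k))$. Since the monodromy $\psi$ is trivial on the new handle loops $e, e'$, collapsing the attached handles in the base $S_1$ and fiber $S_2$ descends to a map $M \to M'$ killing $e, e', f, f'$ on $\pi_1$. Under this map, $\gamma$ (projecting to $e^l$) maps to $1$, and the meridian $\mu$ maps to $1$ as it is null-homotopic in $M$; hence the Luttinger relation $\mu \gamma^{-k} = 1$ is preserved and $r$ descends to the surgered manifold with $r \circ i = \mathrm{id}$ on $\pi_1$. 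Composing with the assumed injection $\pi_1(T_1) * \pi_1(T_2) \hookrightarrow \pi_1(M')$ gives item 3.

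The main obstacle is item 4, $\alpha$-atoroidality for $\alpha \in \langle a, b \rangle$. Up to free homotopy, any map $T^2 \to M(T, \gamma, k)$ with one circle in class $\alpha$ is determined by a commuting pair $(\alpha, \beta) \in \pi_1(M(T, \gamma, k))^2$; via the retraction $r_*$, $r_*\beta$ centralizes $\alpha$ in $\pi_1(M')$ and defines a torus in $M'$ on which both $\omega$ and $c_1$ integrate to zero by the $\alpha$-atoroidality of $M'$. The task is to transfer this vanishing to the torus in $M(T, \gamma, k)$, which requires comparing $[\omega'], [c_1'] \in H^2(M(T, \gamma, k))$ with the pullbacks $r^*[\omega], r^*[c_1]$. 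Since $\omega'|_{M - U_{r/2}} = \omega|_{M - U_{r/2}}$ and $U_r \simeq T^2 \times D^2$ is a local Weinstein piece whose contribution to 2-cycles passing through it is governed by the relation $\mu = \gamma^k$, I expect the difference $[\omega' - r^*\omega]$ and its $c_1$ counterpart to pair trivially with any $\alpha$-torus. The main technical content lies in this relative Mayer--Vietoris / excision computation, which I anticipate to be routine but requires careful bookkeeping of how a general $\alpha$-torus interacts with the surgery region.
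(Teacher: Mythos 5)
Your items (1)--(3) track the paper's proof closely: the paper also observes that $T_1,T_2$ survive unchanged in $M-U_r$, and it establishes $\pi_1$-injectivity by exactly your mechanism, namely that $M'$ is a topological quotient of $M(T,\gamma,k)$, so there is a surjection $\pi_1(M(T,\gamma,k))\to\pi_1(M')$ splitting the inclusion on the subgroup $\langle\pi_1(T_1),\pi_1(T_2)\rangle$. One caveat on asphericity: gluing two aspherical pieces along an aspherical interface only yields an aspherical space if the interface is $\pi_1$-injective into both sides (Whitehead), which you do not check; the paper instead computes $H_2$ of the universal cover via Mayer--Vietoris and applies Hurewicz. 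This is fixable but as written is a small gap.

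The real problem is item (4), which you yourself flag as the "main technical content" and leave as an anticipated-but-unverified computation. The difficulty is genuine: the symplectic form $\omega'$ on $M(T,\gamma,k)$ is \emph{not} $r^*\omega$, and the closed $2$-form $\omega'-r^*\omega$ is supported near the surgery region, where its pairing with a torus depends on how that torus wraps $U_r$. Showing that this pairing vanishes for an arbitrary $\alpha$-torus is essentially equivalent to showing the torus can be pushed off the surgery region --- which is precisely the step your outline is missing. The paper supplies it with a group-theoretic argument: if the torus is generated by commuting classes $\beta\in\langle a,b\rangle$ and $\beta_1$, then their images under the projection $p:\pi_1(M(T,\gamma,k))\to\pi_1(S_1)$ commute in a surface group, whose centralizers of nontrivial elements are cyclic; hence $p(\beta_1)\in\langle p(\beta)\rangle$, which forces $\beta_1$ (and then the whole torus) to be homotopic into $M-U_{2r}$. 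There $\omega'$ and $c_1'$ agree with those of $M$, and atoroidality follows with no cohomological bookkeeping. You should replace your proposed Mayer--Vietoris/excision comparison with this homotopy argument, or else actually carry out the comparison --- but be aware it is not routine, since a priori an $\alpha$-torus can intersect the surgery region in a homologically nontrivial way.
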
 

Furthermore, if $M^\prime$ satisfy condition (**) in Remark \ref{cond**}, then our argument also shows that $M(T,\gamma,k)$ satisfies condition (**).
\begin{proof}
Let $\pi: M\to S_1$ be the projection to the base. Since $e$ can be made  disjoint from $\pi(T_1\cup T_2)$, we still have Lagrangian tori $T_1$, $T_2$ in $M(T,\gamma,k)$, transversely intersecting at one point in $M-U_r$. 

Let $ T_1,T_2\subset M^\prime$ be tori satisfying condition (*), then we claim that they still generate a free product in $\pi_1(M(T,\gamma,k))$. Indeed, by our construction, $M^\prime$ can be viewed as a topological quotient of both $M$ and $M(T,\gamma,k)$. Therefore, $\pi_1(M^\prime)$ is a quotient of $\pi_1(M(T,\gamma,k))$, this shows that if $T_1$ and $T_2$ generate a free product in $\pi_1(M^\prime)$, then they still generate a free product in $\pi_1(M(T,\gamma,k))$.

We next discuss the symplectic form after the surgery and show that $M(T,\gamma,k)$ is aspherical and atoroidal.

It was proved in Proposition 2.2 of \cite{ADK} that the construction is well-defined symplectically, i.e. $M(T,\gamma,k)$ admit a natural symplectic form $\tilde{\omega}$, given by the symplectic forms on $M-U_r$ and $U_r$, whose isotopy class does not depend on the choices in the construction. 

 The manifold $M(T,\gamma,k)$ is aspherical, in fact, after the surgery we still have $\pi_2(M(T,\gamma,k))=0$. This is because the universal over of $M(T,\gamma,k)$ can be expressed by $(\tilde M-\tilde U_r)\cup_{\tilde\phi_k} \tilde U_r$ where $\tilde M$ is the universal cover of $M$, which is a disk bundle over a disk; $\tilde U_r$ is the lifting of $U_r$ to the universal cover of $M$, and is topologically copies of the tubular neighborhood of 2-dimensitonal subspace in $D\times D$; and $\tilde\phi_k$ is the lifting of $\phi_k$.  By Mayer Vietoris sequence, we have $H_2(\tilde M-\tilde U_r)\cup_{\tilde\phi_k} \tilde U_r)=0$. So by Hurewicz theorem, we have $\pi_2(M(T,\gamma,k))=0$.
 
 Now let $a,b$ be liftings of $a_0$ and $b_0$ in the construction of $M^\prime$. We now show that for any loop $\beta\in \langle a,b\rangle$, $M(T,\gamma,k)$ is $\beta$-atoroidal. Let $L\subset M(T,\gamma,k)$ be an immersed torus, generated by $\beta$ and $\beta_1$. Then the homotopy classes of $\beta$ and $\beta_1$ are commuting in $M(T,\gamma,k)$. By construction of $M(T,\gamma,k)$, we have a homomorphism $p: \pi_1(M(T,\gamma,k))\to \pi_1(S_1)$ generated by $p([\gamma])=[\pi(\gamma)]$ if $\gamma\subset M-U_r$ and $p([\mu])=[c]^{-k}$. Then since $\beta$ and $\beta_1$ commutes, we see that $p(\beta)$ and $p(\beta_1)$ commutes in $\pi_1(S_1)$, which forces $p(\beta_1)\in \langle p(\beta) \rangle$. This shows that $\beta_1$ can is homotopic to a curve outside $U_{2r}$, and therefore, $L$ is homotopic to a torus outside $U_{2r}$ in $M(T,\gamma,k)$. Since $M$ is symplectically atoroidal, and the symplectic form and the first Chern class of $M$ and $M(T,\gamma,k)$ agrees outside $U_{2r}$, we see that $M(T,\gamma,k)$ is $\beta$-atoroidal.

\end{proof}

 \subsection{Symplectic connected sum}\label{sympsum}
Given two manifolds $M_1,M_2$ with diffeomorphic codimension-2 submanifolds $N_1\subset M_1, N_2\subset M_2$, suppose that the  Euler classes of normal bundles of $N_1$ and $N_2$ are opposite, $e(N_1)=-e(N_2)$, then there is an isomorphism $\psi$ that identifies a tubular neighborhood of $N_1$ in $M_1$ with that of $N_2$ in $M_2$, which reverses the orientation of the bundles over $N_1$ and $N_2$. Therefore, we may define the sum of $M_1$ and $M_2$ along $N_1$ and $N_2$ to be $M_1\#_\psi M_2:=M_1\cup_\psi M_2$. By Theorem 1.3 of \cite{gompf}, if $(M_1,\omega_1)$ and $(M_2,\omega_2)$ are symplectic and $N_1$ and $N_2$ are symplectomorphic symplectic submanifolds, then the construction can also be made symplectic, and there exists a symplectic form $\omega$ on $M_1\#_\psi M_2$ that agrees with $\omega_1$ and $\omega_2$ outside small neighborhoods of $N_1$ and $N_2$.

Let $M_1=S_1\ltimes_{\psi_1}S_2$ be a surface bundle given in Section \ref{surfbund}, and let $M_2=S_2\ltimes_{\psi_2}S_3$ be another surface bundle such that $\psi_2 $ has a fixed point in $S_3$. Then we may choose $N_1=\{pt\}\times S_2$ to be a fiber in $M_1$, disjoint from the tori $T_1$ and $T_2$ in $M_1$ given in condition (*). We also choose $N_2$ to be the section $S_2\times \{pt\}$ on the fixed point of $\psi_2$. We also choose the monodromy $\psi_2$ so that the Euler class $e(N_2)=0$. Then we may consider $M=M_1\#_\psi M_2$ where $\psi$ is the symplectomorphism that identify a tubular neighborhood of $N_1$ with that of $N_2$. 

\begin{prop}
    Let $M_1=S_1\ltimes_{\psi_1}S_2$ and $M_2=S_2\ltimes_{\psi_2}S_3$ be two surface bundles as constructed above, where $M_1$ satisfy condition (*), and $M_2$ has a section of Euler class zero. Then their symplectic sum $M$ also satisfies condition (*), with Lagrangian tori $T_1$ and $T_2$ given by those in $M_1$. 
\end{prop}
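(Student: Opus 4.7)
The plan is to verify the four clauses of condition (*) for $M = M_1 \#_\psi M_2$ in turn, using the decomposition $M = X_1 \cup_W X_2$ with $X_i = M_i \setminus \nu(N_i)$ and common boundary $W = \partial X_1 = \partial X_2$. Since $e(N_1) = -e(N_2) = 0$, this boundary is the trivial $S^1$-bundle $W \cong S^1 \times S_2$ and $\pi_1(W) = \Z \oplus \pi_1(S_2)$. The sum $M$ is closed by construction, and by Theorem 1.3 of \cite{gompf} carries a symplectic form $\omega$ that agrees with $\omega_1$ and $\omega_2$ outside a small gluing region near $W$. Since $T_1$ and $T_2$ are chosen in $M_1$ disjoint from $N_1$, they descend to embedded Lagrangian tori in $M$ intersecting transversely at one point, which verifies the second clause. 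For the first clause, both $M_1$ and $M_2$ are $K(\pi,1)$-spaces (surface bundles with base and fiber of genus $\geq 2$), so I would deduce asphericity of $M$ from the graph-of-aspherical-spaces principle applied to $M = X_1 \cup_W X_2$, once each vertex piece is aspherical and the edge group injects into both vertex groups; the injectivity is verified below, and asphericity of the pieces reduces to the fact that complements of codimension-two aspherical submanifolds in aspherical manifolds remain aspherical when the peripheral group injects. This gives $\pi_2(M) = 0$, so the vanishing of $\omega$ and $c_1$ on $\pi_2$ is automatic.

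For the third clause, van Kampen's theorem gives
\[
\pi_1(M) \;=\; \pi_1(X_1) *_{\pi_1(W)} \pi_1(X_2),
\]
and the canonical map $\pi_1(X_1) \hookrightarrow \pi_1(M)$ is always injective in an amalgamated product. It therefore suffices to check $\pi_1(T_1 \cup T_2) \hookrightarrow \pi_1(X_1)$ is injective. This inclusion composed with $\pi_1(X_1) \to \pi_1(M_1)$ is exactly the inclusion-induced map $\pi_1(T_1 \cup T_2) \to \pi_1(M_1)$, which is injective by the hypothesis that $M_1$ satisfies condition (*); hence injectivity already holds at the $\pi_1(X_1)$ stage.

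The main obstacle is the fourth clause. I would take $a \subset T_1$ and $b \subset T_2$ to be the same curves witnessing $\alpha$-atoroidality in $M_1$, chosen as lifts of the base curves $a_0, b_0 \in \pi_1(S_1)$. Given a non-trivial $\alpha \in \langle a, b\rangle$ and an immersed torus $L \colon \T^2 \to M$ with one generator in the class of $\alpha$ commuting with a second generator $\beta$, the plan is to homotope $L$ into $X_1$ and then invoke the $\alpha$-atoroidality of $M_1$; since $\omega = \omega_1$ and $c_1(TM) = c_1(TM_1)$ on $X_1$, this yields $\int_L \omega = 0$ and $c_1(L) = 0$. The key input is that no conjugate of $\alpha$ in $\pi_1(M)$ lies in the edge group $\pi_1(W)$: the image of $\pi_1(W)$ in $\pi_1(M_1)$ is contained in the fiber subgroup $\pi_1(S_2)$ (the meridian of $N_1$ is null-homotopic in $M_1$), which projects trivially to $\pi_1(S_1)$, whereas $\alpha$ projects to a non-trivial element of the free subgroup $\langle a_0, b_0\rangle \subset \pi_1(S_1)$. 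Bass--Serre theory then forces the centralizer of $\alpha$ in $\pi_1(M)$ to lie in $\pi_1(X_1)$: an elliptic element not conjugate into the edge group fixes a unique vertex of the Bass--Serre tree, so any commuting element must fix the same vertex. This gives $\beta \in \pi_1(X_1)$ and permits the desired homotopy of $L$ into $X_1$. The most delicate point will be justifying the centralizer statement rigorously, which I expect to follow from a direct argument on the Bass--Serre tree, or from a malnormality property of $\pi_1(W)$ inside $\pi_1(X_1)$ that stems from the hyperbolic structure of the surfaces $S_1, S_2$.
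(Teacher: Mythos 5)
Your proposal is correct and follows the same overall strategy as the paper: the graph-of-groups structure of the fiber sum gives injectivity of $\pi_1(T_1\cup T_2)$, asphericity comes from decomposing $M$ into aspherical pieces, and atoroidality is obtained by homotoping the torus into the $M_1$-side and invoking the $\alpha$-atoroidality of $M_1$ together with the fact that $\omega$ and $c_1$ agree there by Gompf's theorem. Where you differ is in execution, and your version is the more careful one at two points. First, you amalgamate over the actual common boundary $W\cong S^1\times S_2$ of the punctured pieces $X_i=M_i\setminus\nu(N_i)$, whereas the paper writes $\pi_1(M)=\pi_1(M_1)*_{\pi_1(S_2)}\pi_1(M_2)$, which suppresses the meridian class; your route also sidesteps having to know that $\pi_1(M_1)$ itself injects into $\pi_1(M)$, since you only need $\pi_1(T_1\cup T_2)\hookrightarrow\pi_1(X_1)\hookrightarrow\pi_1(M)$, and the first injection follows formally from injectivity of the composite into $\pi_1(M_1)$. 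Second, the paper's atoroidality step simply asserts that a class $\beta_1$ commuting with $\beta\in\langle a,b\rangle$ can be expressed inside the $M_1$-side subgroup; your Bass--Serre argument --- $\alpha$ survives the projection to $\pi_1(S_1)$, which kills $\pi_1(W)$, so no conjugate of $\alpha$ lies in the edge group, so $\alpha$ fixes a unique vertex of the tree and its centralizer is contained in $\pi_1(X_1)$ --- is precisely the rigorous justification of that assertion, and you have essentially already written it out, so I would not regard the "delicate point" you flag as an outstanding gap. The one hypothesis you should make explicit is that $a_0,b_0$ generate a free subgroup of $\pi_1(S_1)$ (true for the constructions of Section 2.1), since that is what makes the image of $\alpha$ in $\pi_1(S_1)$ non-trivial.
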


Furthermore, if $M_1$ satisfies condition (**), then our argument shows that $M$ also satsifies condition (**).

\begin{proof}

We first compute the fundamental group of $M$. By Van Kampen theorem, we have $\pi_1(M)=\pi_1(M_1)*_{\pi_1(S_2)}\pi_1(M_2)=(\pi_1(S_1)\ltimes_{\psi_1}\pi_1(S_2)))*_{\pi_1(S_2)}(\pi_1(S_2)\ltimes_{\psi_2}\pi_1(S_3))$. This shows that there exists a monomorphism $\pi_1(M_1)\to \pi_1(M)$. Therefore, if $T_1$ and $T_2$ generate a free product in $\pi_1(M_1)$ then they also generate a free product in $\pi_1(M)$.

We next show that $M$ is aspherical. Indeed, the universal cover of $M $ is contractible, as it is the topological sums of of $D\times D$ along $D$, so we have $\pi_2(M)=0$.

Let $\beta$ be any curve in the class generated by $a\subset T_1,b\subset T_2$, we now show that $M$ is $\beta$-atoroidal. Suppose a torus $T\subset M$ is generated by $\beta$ and $\beta_1$. Since $\pi_1(M)=(\pi_1(S_1)\ltimes_{\psi_1}\pi_1(S_2)))*_{\pi_1(S_2)}(\pi_1(S_2)\ltimes_{\psi_2}\pi_1(S_3))$, and $\beta\in (\pi_1(S_1)\ltimes_{\psi_1}\pi_1(S_2))*_{\pi_1(S_2)}(\pi_1(S_2)\times\{id\})$, we see that $\beta_1$ can also be expressed as an element in $(\pi_1(S_1)\ltimes_{\psi_1}\pi_1(S_2))*_{\pi_1(S_2)}(\pi_1(S_2)\times\{id\})$. Therefore, $\beta_1$ is homotopic to a curve in $M$ that lies in $M_1-N_1$. Since $M_1$ is symplectically atoroidal, this shows that $M$ is $\beta$-atoroidal.
\end{proof}
 
\section{Preliminaries}\label{prelim}

In this section, we provide some preliminaries on plumbing space, Floer theory and the relation between Floer information and Hofer metric.
\subsection{Plumbing of cotangent bundles}
We first give a definition of the plumbing of cotangent bundles.
\begin{defi}[Plumbing]\label{plumb}
Given two manifolds $L_1$ and $L_2$ with points $p_1 \in L_1$ and $p_2 \in L_2$ and fixed Riemannian metrics on $L_1$ and $L_2$, let $U_i \subset L_i$ be neighborhoods of $p_i$. Suppose there exist local isometries $\psi_i: U_i \to B(2) \subset \mathbb{R}^n$ (the ball of radius $2$) such that $\psi_i(p_i) = 0$ and $\psi_i(U_i) = B(1)$ (the ball of radius $1$). These induce symplectomorphisms $T^*\psi_i$ from the unit disk bundles $D^*U_i$ to $D^n \times D^n \subset D^*\mathbb{R}^n \simeq \mathbb{R}^{2n}$.

Let $J: T^*\mathbb{R}^n \to T^*\mathbb{R}^n$ be the standard linear symplectomorphism given by $J(x_i) = dx_i$ and $J(dx_i) = -x_i$ for $i=1,\dots,n$. We obtain a symplectomorphism from $D^*U_1$ to $D^*U_2$ defined by $\psi = (T^*\psi_2)^{-1} \circ J \circ T^*\psi_1$.

The resulting \emph{plumbing space} $T^*L_1 \sharp T^*L_2$ is defined as the completion of the \emph{plumbing domain}
\[
P_\psi(L_1,L_2) := D^*L_1 \cup_\psi D^*L_2 / \{x \sim \psi(x) \text{ for all } x \in D^*U_1\}.
\]
\end{defi}

In this paper, for a manifold $M$ satisfying condition (*), by Weinstein's Lagrangian neighborhood theorem, a neighborhood of $T_1 \cup T_2$ in $M$ is symplectomorphic to a neighborhood of the zero sections in the plumbing domain $P_\psi(T_1,T_2)$. We shall only consider Hamiltonian diffeomorphisms supported in this small neighborhood. Therefore, we shall not distinguish between the plumbing space and the plumbing domain in this paper.

\subsection{Some Floer theory}\label{floer}
This paper utilizes (filtered) Floer homology for a specific class of orbits. Let $\mathcal{L} M$ denote the free loop space of $M$.

Let $(M,\omega)$ be a symplectically aspherical manifold, i.e., $\int_{S^2} \rho^*\omega = 0$ and $\int_{S^2} \rho^*c_1 = 0$ for any smooth map $\rho: S^2 \to M$, where $c_1$ is the first Chern class of $TM$. Fix a free, non-trivial homotopy class $\alpha \in \pi_0(\mathcal{L} M)$ with a representative $\gamma_0 \in \alpha$. We say that $M$ is $\alpha$-atoroidal if for any smooth map $\rho: \mathbb{T}^2 = S^1 \times S^1 \to M$ such that $\rho(S^1 \times \{t\}) \in \alpha$ for all $t \in S^1$, we have
\[
\int_{\mathbb{T}^2} \rho^*\omega = \int_{\mathbb{T}^2} \rho^*c_1 = 0.
\]

Let $H: S^1 \times M \to \mathbb{R}$ be a compactly supported Hamiltonian. The Floer homology of $H$ in the orbit class $\alpha$ is constructed as follows.
\begin{itemize}
    \item \textbf{Action Functional:} For a loop $\gamma: S^1 \to M$ in the class $\alpha$, define the \emph{action functional} as
\[
\mathcal{A}_H(\gamma) = \int_0^1 H_t(\gamma(t))\,dt - \int_{\bar{\gamma}} \omega,
\]
where $\bar{\gamma}: S^1 \times [0,1] \to M$ is a smooth homotopy from $\gamma_0$ to $\gamma$. By the $\alpha$-atoroidal assumption, the action is independent of the choice of $\bar{\gamma}$.

\item \textbf{Floer Chain Complex:} The critical points of $\mathcal{A}_H$ correspond to 1-periodic orbits of the Hamiltonian flow $\phi_H^t$ in the class $\alpha$. Denote the set of these orbits by $\mathcal{P}_\alpha(H)$. Since $\alpha$ is non-trivial and $H$ is compactly supported, the periodic orbits lie in the compact support of $H$.

The \emph{Floer chain complex} $\mathrm{CF}_*(H)_\alpha$ is the vector space over $\mathbb{Z}/2\mathbb{Z}$ generated by the elements of $\mathcal{P}_\alpha(H)$.

\item \textbf{Differential:} Fix a generic, time-dependent, $\omega$-compatible almost complex structure $J(t,x)$ on $M$. For two orbits $x(t), y(t) \in \mathcal{P}_\alpha(H)$, let $\mathcal{M}(x, y)$ denote the space of solutions $u: \mathbb{R} \times S^1 \to M$ to the Floer equation:
\[
\partial_s u + J(t, u)(\partial_t u - X_H(t, u)) = 0,
\]
with asymptotic conditions $u(s, t) \to x(t)$ as $s \to -\infty$ and $u(s, t) \to y(t)$ as $s \to +\infty$. Since the equation is translation-invariant in $s$, consider the moduli space $\bar{\mathcal{M}}(x, y) = \mathcal{M}(x, y) / \mathbb{R}$. For $H$ with compact support and $\alpha$ non-trivial, $\bar{\mathcal{M}}(x, y)$ is a compact manifold (possibly with boundary) for generic $J$.

The complex $\mathrm{CF}_*(H)_\alpha$ is graded by the \emph{Conley-Zehnder index} $i_{\mathrm{CZ}}(\cdot)$, which is well-defined by the aspherical and atoroidal assumptions. We adopt the convention that the Conley-Zehnder index agrees with the Morse index for small perturbations of the zero Hamiltonian (see Section \ref{precz} for details). A standard result \cite{CZ84} shows that $\dim \bar{\mathcal{M}}(x, y) = i_{\mathrm{CZ}}(x) - i_{\mathrm{CZ}}(y) - 1$. In particular, when $i_{\mathrm{CZ}}(y) = i_{\mathrm{CZ}}(x) - 1$, the moduli space $\bar{\mathcal{M}}(x, y)$ is a finite set of points.

The differential is defined by counting these elements:
\[
\partial(x) = \sum_{\substack{y \in \mathcal{P}_\alpha(H) \\ i_{\mathrm{CZ}}(y) = i_{\mathrm{CZ}}(x) - 1}} \left( \#_{\mathbb{Z}/2\mathbb{Z}} \, \bar{\mathcal{M}}(x, y) \right) \cdot y.
\]

\item \textbf{Floer Homology Group:} It is well known that $\partial \circ \partial = 0$ for generic $J$, so we define the associated homology $\mathrm{HF}_*(H)_\alpha$. In fact, $\mathrm{HF}_*(H)_\alpha$ is independent of $H$ and can be shown to be trivial by considering a small perturbation of $H=0$.
\end{itemize}

To extract non-trivial information, we use \emph{filtered Floer homology}. For $a \in \mathbb{R}$, define the filtered chain complex $\mathrm{CF}_*(H)_\alpha^{(-\infty, a)}$ as the subcomplex generated by orbits $x$ with $\mathcal{A}_H(x) < a$. Standard energy-action estimates show that $\mathcal{A}_H(\partial x) \leq \mathcal{A}_H(x)$ for any $x \in \mathcal{P}_\alpha(H)$, so $\partial$ restricts to this subcomplex. Thus, we define the filtered Floer homology $\mathrm{HF}_*(H, J)_\alpha^{(-\infty, a)}$.

More generally, for $a, b \in \mathbb{R} \cup \{\pm\infty\}$ with $a < b$, define the quotient complex
\[
\mathrm{CF}_*(H)_\alpha^{(a, b)} = \mathrm{CF}_*(H)_\alpha^{(-\infty, b)} / \mathrm{CF}_*(H)_\alpha^{(-\infty, a)},
\]
with homology denoted $\mathrm{HF}_*(H, J)_\alpha^{(a, b)}$.

We define \emph{comparison maps} $j_d: \mathrm{HF}_*(H, J)_\alpha^{(a, b)} \to \mathrm{HF}_*(H, J)_\alpha^{(a+d, b+d)}$ for shifting the action window, induced by the natural inclusion of chain complexes.

For different choices of $(H, J)$ generating the same $\phi \in \mathrm{Ham}(M,\omega)$, there is a canonical isomorphism between $\mathrm{HF}_*(H, J)_\alpha^{(a, b)}$ and $\mathrm{HF}_*(H', J')_\alpha^{(a, b)}$ that is equivariant with respect to the maps $j_d$ (see, e.g., Proposition 2.5 and Remark 2.9 of \cite{PS}). Therefore, we denote this filtered Floer homology by $\mathrm{HF}_*(\phi)_\alpha^{(a, b)}$.

If a Hamiltonian diffeomorphism $\psi$ commutes with $\phi$, then it induces a map on the set of periodic orbits of $\phi$ that preserves both the homotopy class $\alpha$ and the action functional. Consequently, $\psi$ induces a well-defined map on the filtered Floer homology:
\[
\psi_*: \mathrm{HF}_*(\phi)_\alpha^{(a, b)} \to \mathrm{HF}_*(\phi)_\alpha^{(a, b)}.
\]

\subsubsection{Conley-Zehnder index}\label{precz}
Let $\gamma$ be a 1-periodic orbit of $H$ in the class $\alpha$. The Conley-Zehnder index $i_{\mathrm{CZ}}(\gamma)$ is an integer associated with the linearized flow of $\phi_H^t$ along $\gamma$.

Choose a symplectic trivialization $\Phi: \gamma^*(TM) \to S^1 \times (\mathbb{R}^{2n}, \omega_0)$ along $\gamma$. The linearized flow defines a path of symplectic matrices $d\phi_H^t(\gamma(0))$. We define the Conley-Zehnder index of $\gamma$ as the Maslov index of this path relative to the starting point. The index is independent of the choice of trivialization by the aspherical and $\alpha$-atoroidal assumptions.

A path of symplectic matrices $\Gamma(t):[0,1]\to \mathrm{Sp}(2n)$ is \emph{non-degenerate} if $\det(\Gamma(1)-I_{2n}) \neq 0$, and \emph{degenerate} otherwise.

For the purposes of this paper, particularly for the index calculations in Section \ref{cz}, we require a definition that extends to degenerate paths. We use the Robbin-Salamon index \cite{RobbinSalamon}, which generalizes the Conley-Zehnder index to degenerate paths and assigns a half-integer value. Explicitly, the Conley-Zehnder index for a possibly degenerate orbit is defined as $n$ minus the Robbin-Salamon index for a $2n$-dimensional symplectic manifold.

A fundamental property of the Conley-Zehnder index is its relation with the dimension of moduli spaces: for $x, y \in \mathcal{P}_\alpha(H)$, we have $\dim \bar{\mathcal{M}}(x, y) = i_{\mathrm{CZ}}(x) - i_{\mathrm{CZ}}(y) - 1$.

\subsection{Boundary depth $\beta_\alpha(\phi)$}
Let $\alpha$ be a non-trivial homotopy class of free loops in $M$, and let $H$ be a compactly supported Hamiltonian. The \emph{boundary depth} of $H$, defined in \cite{Usher}, is the maximal action difference between an element in the image of the differential and a chain that bounds it. Precisely,
\[
\beta_{\alpha}(H) = \sup_{x \in \mathrm{Im}\,\partial} \inf_{y \in \partial^{-1}(x)} \left( \mathcal{A}_H(y) - \mathcal{A}_H(x) \right).
\]
Equivalently, $\beta_{\alpha}(H)$ is the length of the longest finite-length bar in the persistence barcode of the filtered Floer complex:
\[
\beta_{\alpha}(H) = \sup \left\{ d \in \mathbb{R} : \exists I \subset \mathbb{R} \text{ such that } 0 \neq j_d: \mathrm{HF}_*(H)_{\alpha}^I \to \mathrm{HF}_*(H)_{\alpha}^{I+d} \right\}.
\]

The second definition shows that $\beta_\alpha(H)$ depends only on the time-1 map $\phi = \phi_H^1$. Therefore, we define the boundary depth for the Hamiltonian diffeomorphism itself, denoted $\beta_\alpha(\phi)$.

Furthermore, the boundary depth is a Lipschitz function with respect to the Hofer norm. (While \cite{Usher} focuses on closed manifolds, the arguments extend to our setting of non-compact manifolds of finite geometry since the Hamiltonians have compact support and the Floer theory is well-defined.)

\begin{prop}\label{bounddepth}
    Let $(M,\omega)$ be an aspherical and atoroidal symplectic manifold. For any compactly supported Hamiltonian diffeomorphism $\phi \in \mathrm{Ham}(M,\omega)$ and any non-trivial homotopy class $\alpha$, we have
    \[
    \beta_\alpha(\phi) \le \|\phi\|_H.
    \]
\end{prop}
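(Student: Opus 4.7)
The plan is to follow Usher's argument in \cite{Usher}, adapted to our setting of compactly supported Hamiltonians on an aspherical, atoroidal symplectic manifold. The core observation is that the filtered Floer chain complex $\mathrm{CF}_*(H)_\alpha^{(-\infty,a)}$, with structure maps given by the inclusions as $a$ increases, is a persistence module, and the boundary depth $\beta_\alpha(H)$ is precisely the length of the longest finite bar in its barcode (this is the second definition given in the text). Because $H$ is compactly supported and $\alpha$ is non-trivial, every $1$-periodic orbit representing $\alpha$ lies in the support of $H$; combined with a maximum-principle / no-escape argument, this ensures that the relevant Floer and continuation moduli spaces stay in a fixed compact set, so the closed-manifold theory applies verbatim.

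The main step is to compare the persistence modules of two compactly supported Hamiltonians $H$ and $K$ via continuation. A monotone homotopy between $H$ and $K$ yields a chain map
\[
\Phi_{HK}: \mathrm{CF}_*(H)_\alpha^{(-\infty,a)} \to \mathrm{CF}_*(K)_\alpha^{(-\infty,\,a + E^+(K-H))},
\]
where $E^+(K-H) = \int_0^1 \max_M (K_t - H_t)\,dt$; the action shift is the standard energy estimate for the parametrized Floer equation. The reverse continuation map $\Phi_{KH}$ shifts action by $E^+(H-K)$, and the composition $\Phi_{KH}\circ \Phi_{HK}$ is chain homotopic to the identity through a homotopy that shifts action by $E^+(H-K) + E^+(K-H) = \int_0^1 (\max_M(H_t - K_t) - \min_M(H_t - K_t))\,dt$, the Hofer oscillation of $H-K$. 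Thus the persistence modules of $H$ and $K$ are $\|H-K\|_{\mathrm{osc}}$-interleaved.

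Since boundary depth is $1$-Lipschitz with respect to interleaving distance (equivalently, bottleneck distance on barcodes), we obtain $|\beta_\alpha(H) - \beta_\alpha(K)| \le \|H-K\|_{\mathrm{osc}}$. Now specialize to $K\equiv 0$: because $\alpha$ is a non-trivial free homotopy class, no constant loop represents $\alpha$, so $\mathrm{CF}_*(0)_\alpha = 0$ and hence $\beta_\alpha(0) = 0$. It follows that $\beta_\alpha(H) \le \|H\|_{\mathrm{osc}}$. Recalling that $\beta_\alpha$ depends only on the time-$1$ map $\phi$ (as stated in the text preceding the proposition), we take the infimum over all compactly supported Hamiltonians $H$ with $\phi_H^1 = \phi$ to conclude $\beta_\alpha(\phi) \le \|\phi\|_H$.

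The most delicate point is the analytic input in the non-compact setting: one must ensure that finite-energy continuation strips between orbits in class $\alpha$ cannot escape to infinity. This is where compactness of supports and non-triviality of $\alpha$ are used — the asymptotic ends of any continuation cylinder are trapped in the compact union of supports, and a standard monotonicity/integrated maximum principle argument confines the entire strip. Once this compactness is in place, the interleaving and persistence arguments are formal, and the Lipschitz estimate follows.
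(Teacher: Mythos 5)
Your proposal is correct and follows essentially the same route as the paper, which simply invokes Usher's Lipschitz property of the boundary depth with respect to the Hofer oscillation norm and the vanishing of the class-$\alpha$ complex for $H \equiv 0$ (since a non-trivial free homotopy class has no constant representatives). The compactness discussion for the non-compact setting matches the paper's remark that the arguments extend because the Hamiltonians are compactly supported.
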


\subsection{Loop rotation operator and $\mathbb{Z}_k$ spectral spread}
In this section, we introduce the Floer-theoretic invariant used to bound the distance from a Hamiltonian diffeomorphism to the set of $k^{\text{th}}$ powers.

Let $H: S^1 \times M \to \mathbb{R}$ be a Hamiltonian. Define the $k$-fold iteration of $H$ by $H^{(k)}(t, x) = k H(kt, x)$. A standard calculation shows that $\phi_{H^{(k)}}^1 = (\phi_H^1)^k$.

The following proposition, which is a corollary of Theorem 4.22 in \cite{PS}, provides the key estimate.

\begin{prop}\label{dishamk}
    Let $H: S^1 \times M \to \mathbb{R}$ be a Hamiltonian on $M$. Suppose $H^{(k)}(t,x) = kH(kt,x)$ is non-degenerate and $\alpha \in \pi_0(\mathcal{L} M)$ is a primitive element (i.e. it is not a power of another element). Let $k$ be a prime number. For $r \in \mathbb{Z}$, suppose that there exists exactly $k$ many generators $p_i,1\le i\le k$ of $\mathrm{CF}_*(H^{(k)})_\alpha$ with index $r$, and they satisfy
    \[
    \min_{q \in \partial^{-1}(p_i)} \left( \mathcal{A}_{H^{(k)}}(q) - \mathcal{A}_{H^{(k)}}(p_i) \right) \ge D > 0.
    \]
    Then the Hofer distance is bounded below by $d_H(\phi_H, \mathrm{Ham}^k(M)) \ge \frac{1}{4k}D$.
\end{prop}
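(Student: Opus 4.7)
The plan is to derive a contradiction from the assumption that some $g \in \mathrm{Ham}(M,\omega)$ satisfies $d_H(\phi_H, g^k) < \frac{D}{4k}$, by exploiting the interplay between the intrinsic $\mathbb{Z}_k$ loop-rotation symmetry on the Floer complex of $H^{(k)}$ and the enhanced $\mathbb{Z}_{k^2}$ symmetry that would be forced if $\phi_H$ were (close to) a $k$-th power. The core is a direct application of the $\mathbb{Z}_k$-spectral spread estimate of \cite[Theorem 4.22]{PS}, so the main task is to set up and verify its input hypotheses in our setting.

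First, I would set up the loop rotation operator $R_k$ on $\mathcal{P}_\alpha(H^{(k)})$ defined by $(R_k\gamma)(t) = \gamma(t + 1/k)$; this is well-defined since $H^{(k)}(t,x) = kH(kt,x)$ is $1/k$-periodic in $t$, and it preserves the action, commutes with the differential, and generates a $\mathbb{Z}_k$-action on $\mathrm{CF}_*(H^{(k)})_\alpha$. Primitivity of $\alpha$ prevents any orbit in class $\alpha$ from being an iterate of a shorter periodic orbit, so the action is free on generators. Because $R_k$ preserves the Conley--Zehnder index and the index-$r$ part of $\mathrm{CF}_*(H^{(k)})_\alpha$ has exactly $k$ generators $p_1,\ldots,p_k$, these must form a single free $\mathbb{Z}_k$-orbit, sharing a common action value.

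Next I would record the exact-case obstruction. If $\phi_H = g^k$ with $g = \phi_G^1$, then $\phi_H^k = g^{k^2}$ is generated by both $H^{(k)}$ and $G^{(k^2)}$; the latter is $1/k^2$-periodic in $t$, carrying a loop rotation $R_{k^2}$ with $R_{k^2}^k = R_k$. The canonical isomorphism of filtered Floer complexes (which depends only on the time-$1$ map) transfers this $\mathbb{Z}_{k^2}$-action onto $\mathrm{CF}_*(H^{(k)})_\alpha$, extending the $\mathbb{Z}_k$-action. By primitivity and $k$ prime, the $\mathbb{Z}_{k^2}$-action is again free on generators, so the orbit of any $p_i$ has $k^2 > k$ elements, contradicting the assumption that only $k$ generators of index $r$ exist. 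This settles the statement when the distance is zero.

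For the quantitative bound, the argument proceeds via persistence modules. Each $p_i$ begins a bar of length at least $D$ in the $\mathbb{Z}_k$-equivariant barcode of $\mathrm{CF}_*(H^{(k)})_\alpha$, and these bars are permuted by the single free $\mathbb{Z}_k$-orbit. If $d_H(\phi_H, g^k) = \epsilon$, then by bi-invariance of the Hofer metric one has $d_H(\phi_H^k, g^{k^2}) \leq k\epsilon$, and a parametric version of Proposition \ref{bounddepth} shows that the corresponding persistence modules are $k\epsilon$-interleaved. For the barcode carrying a $\mathbb{Z}_k$-orbit of $k$ bars of length $\geq D$ to be $k\epsilon$-close to a genuinely $\mathbb{Z}_{k^2}$-symmetric configuration (which would split each such bar into a $\mathbb{Z}_{k^2}$-orbit of $k^2$ bars), one needs $k\epsilon \geq D/4$, yielding $\epsilon \geq D/(4k)$. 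The principal obstacle, absorbed by invoking \cite[Theorem 4.22]{PS}, is to make this bottleneck estimate between $\mathbb{Z}_k$- and $\mathbb{Z}_{k^2}$-symmetric bar patterns precise; the factor $\frac{1}{4}$ arises from that equivariant interleaving computation.
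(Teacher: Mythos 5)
Your proposal is correct and follows essentially the same route as the paper: both use the loop rotation operator $R_k$ and primitivity of $\alpha$ to show the $k$ index-$r$ generators form a single free $\mathbb{Z}_k$-orbit (equivalently, one generator per eigenspace of the $\mathbb{Z}_k$-action) with bars of length at least $D$, and then conclude by a direct application of Theorem 4.22 of \cite{PS}. Your additional discussion of the $\mathbb{Z}_{k^2}$-symmetry and the equivariant interleaving is a heuristic unpacking of that cited theorem rather than a different argument, so the two proofs coincide in substance.
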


\begin{proof}
    We consider the loop rotation operator
    \[
    R_k: \mathcal{L}_\alpha M \to \mathcal{L}_\alpha M, \quad R_k(x)(t) = x(t + 1/k),
    \]
    which induces $\Z_k$-action on the filtered Floer complex $\mathrm{CF}_*(H^{(k)})_\alpha$. By \cite[Lemma 3.1]{PS}, this map agrees with the action of $(\phi_H)_*$ on the filtered Floer homology.

    Since $\alpha$ is primitive, for a generator $x$, its orbit $R_k^i(x):1\le i\le k$ under the $\mathbb{Z}_k$-action generated by $R_k$ are all distinct, therefore, by our assumption the generators of $\mathrm{CF}_*(H^{(k)})_\alpha$ with index $r$ are exactly the set $\{R_k^i(x)\}$. 

    This shows that for any unitary root $\xi\in \Z_k$, there exists exactly one element of index $r$ in the eigenspace of $\xi$ under the $\Z_k$ action of $\mathrm{CF}_*(H^{(k)})_\alpha$, which does not vanish on an interval of length $D$.

    Therefore, by Theorem 4.22 of \cite{PS}, we conclude that $d_H(\phi_H, \mathrm{Ham}^k(M)) \ge \frac{1}{4k}(D-2\epsilon)$ for any $\epsilon>0$, let $\epsilon\to 0$ yeilds the desired result.
\end{proof}

\section{Generalized linked twist map on plumbing of tori}\label{ltm}

Now we construct the Hamiltonians that we shall prove to be robustly non-autonomous, and generate a free group in the asymptotic cones of $\mathrm{Ham}(M,\omega)$. They shall be given by composition of time-change of geodesic flows on the two tori $T_1$ and $T_2$.

Let $\tau^t$ be a time change of the geodesic flow on $T^* \mathbb{T}^n$, given by $$\tau^t(v,x)=(v,x+t\rho(|v|) v),$$ where 
\[
\rho(r)= \left\{\begin{array}{cc}
  1   & r\le \epsilon/2 \\
   \frac{2(\epsilon-r)}{\epsilon}  & \epsilon/2<r\le \epsilon \\
   0 & r>\epsilon
\end{array}\right.,
\]
and $0< \epsilon <0.01$. For $\delta < 0.0001$, let $\tau_{\delta}^t(v,x) = (v, x + t\rho_\delta(|v|) v)$ be the $\delta$-smoothing of $\tau^t$, where $\rho_{\delta}(r)$ is a $\delta$-smoothing of $\rho$ as given in Figure \ref{figrho}.

\begin{figure}
    \centering
    \includegraphics[width=0.8\linewidth]{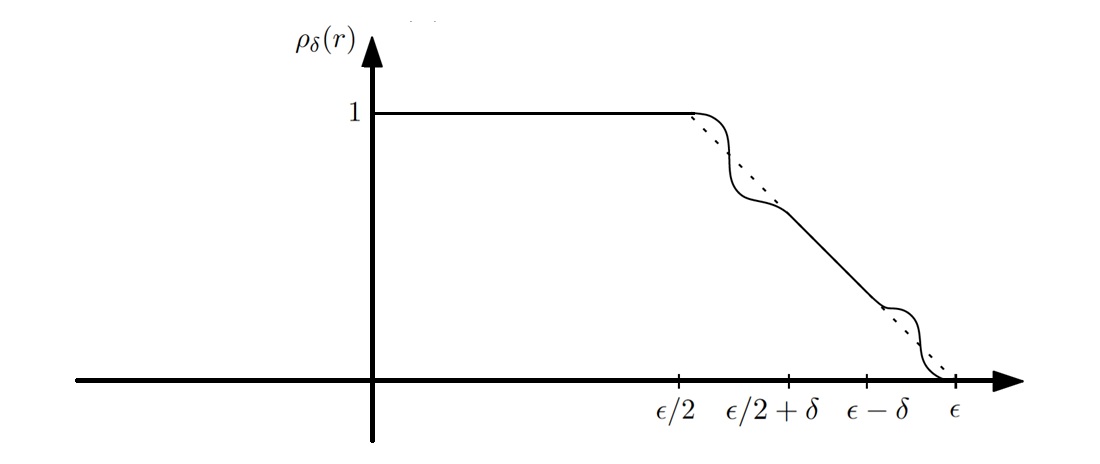}
    \caption{$\delta$-smoothing of $\rho(r)$}
    \label{figrho}
\end{figure}

The Hamiltonian path $\tau_\delta^t$ is generated by a $\delta$-smoothing of the Hamiltonian function $H(v,x)=h(|v|)$, where 
\begin{equation}\label{funch}
    h(r)= \left\{\begin{array}{cc} \frac{r^2}{2}-\frac{7}{24}\epsilon^2   & r\le \epsilon/2 \\  r^2-\frac{2r^3}{3\epsilon}-\frac{\epsilon^2}{3} & \epsilon/2<r\le \epsilon \\   0 & r>\epsilon\end{array}\right.
\end{equation}

Let $M$ be a symplectic manifold satisfying condition (*) with embedded Lagrangian tori $T_1$ and $T_2\subset M$. By Weinstein's theorem, a tubular neighborhood of $T_1$ and $T_2$ is symplectomorphic to the plumbing of the cotangent spaces of two tori $T^*_\epsilon\mathbb{T}^n \cup_\psi T^*_\epsilon \mathbb{T}^n$, where $T^*_\epsilon \mathbb{T}^n\subset T^* \mathbb{T}^n$ consists of cotangent vectors of length $\le \epsilon$, using the standard metric on $\mathbb{T}^n$. For any word in the free group $w=a^{k_1}b^{k_2}\cdots a^{k_{2m-1}}b^{k_{2m}}\in \mathbb{F}_2 = \langle a, b\rangle$, we define 
\begin{equation}\label{taunw}
    \tau(N,w)=\tau_{1,\frac{1}{N^2}}^{k_1N}\circ  \tau_{2,\frac{1}{N^2}}^{k_2N}\circ\cdots \circ \tau_{1,\frac{1}{N^2}}^{k_{2m-1}N}\circ  \tau_{2,\frac{1}{N^2}}^{k_{2m}N},
\end{equation}  
where $\tau_{1,\delta}^t$ and $\tau_{2,\delta}^t$ are the flow $\tau_{\delta}^t$ on the two copies of $T^*\mathbb{T}^n$. 

Then $\tau(N,w)$ is a Hamiltonian diffeomorphism on $M$, and we take the following explicit Hamiltonian isotopy of $\tau(N,w)$, which is the concatenation of $2m$ paths of Hamiltonian flows:
\begin{equation}\label{hampath}
\tau_{2,\frac{1}{N^2}}^{k_{2m}Nt} \# \tau_{1,\frac{1}{N^2}}^{k_{2m-1}Nt}\circ  \tau_{2,\frac{1}{N^2}}^{k_{2m}N} \#\cdots \#\tau_{1,\frac{1}{N^2}}^{k_1Nt}\circ  \tau_{2,\frac{1}{N^2}}^{k_2N}\circ\cdots \circ \tau_{1,\frac{1}{N^2}}^{k_{2m-1}N}\circ \tau_{2,\frac{1}{N^2}}^{k_{2m}N}.
\end{equation} 
The corresponding Hamiltonian is the sum of the $2m$ terms, corresponding to the intermediate paths. 

In this section, we prove the following bound on the action differences between fixed points of $\tau(N,w)$.

\begin{prop}\label{deltaA}
    Let $w=a^{k_1}b^{k_2}\cdots a^{k_{2m-1}}b^{k_{2m}}\in \mathbb{F}_2=\langle a,b\rangle$ be a word with $k_j\neq 0$, $1\le j\le 2m$. Let $M$ satisfy condition (*) and let $\tau(N,w)$ be the Hamiltonian given in Equation \ref{taunw}. Then there exist constants $C, \delta > 0$, depending only on $w$ and $\epsilon$, such that for $N > C$, there exists a free homotopy class $\gamma(N,w) \in \pi_0(\mathcal{L} M)$ and an index $r \in \mathbb{N}$ satisfying 
    \[
    \min_{q \in \partial^{-1}(p)} \left( \mathcal{A}_{\tau(N,w)}(q) - \mathcal{A}_{\tau(N,w)}(p) \right) \ge \delta N
    \]
    for any fixed point $p$ of $\tau(N,w)$ of index $r$.

    Furthermore, if $w=(a^{k_1}b^{k_2})^m$, then $\gamma(N,w)$ can be chosen to be primitive, and there exist exactly $m$ fixed points of $\tau(N,w)$ of index $r$ in the class $\gamma(N,w)$.
\end{prop}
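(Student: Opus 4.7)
The plan is to adapt the strategy of Polterovich--Shelukhin \cite{PS} and the eggbeater analysis of \cite{Junzhangetal} to the plumbing of cotangent bundles of two $n$-dimensional tori. The argument splits into four steps: parametrise the 1-periodic orbits of $\tau(N,w)$ as tuples of velocities, identify the free homotopy class and compute the action explicitly, import the Conley--Zehnder index from Section \ref{cz}, and combine these ingredients to deduce the lower bound.

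For the first step, I would unpack the composition defining $\tau(N,w)$. A fixed point $p$ produces a cyclic chain $p=p_0,p_1,\dots,p_{2m}=p_0$ in the plumbing region, where $p_{j-1}\to p_j$ is the application of $\tau_{i_j}^{k_jN}$ interleaved, at the transitions between $T^*T_1$ and $T^*T_2$, by the plumbing identification $\psi$, which in local coordinates is the linear symplectomorphism $J$ swapping base and fibre with a sign. Solving the resulting recursive system shows that the admissible velocities $v_j$ in each segment are forced to satisfy $k_jN\rho(|v_j|)v_j\in\Z^n$, which in the pure geodesic region $|v_j|\le\epsilon/2$ amounts to $v_j\in\tfrac{1}{k_jN}\Z^n$ (up to an $O(1/N^2)$ correction from the smoothing $\rho_{1/N^2}$), while in the intermediate region $(\epsilon/2,\epsilon]$ it produces additional ``slow'' solutions. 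The tuple $(v_1,\dots,v_{2m})$ determines $p$ uniquely once a base-point convention is fixed. Under the Hamiltonian path \eqref{hampath}, the loop traced by $p$ is the concatenation of $2m$ geodesic arcs with winding vectors $\beta_j=k_jN\rho(|v_j|)v_j\in\Z^n$, and the unique-reduced-form item of condition (*) (or (**)) identifies its free homotopy class with the conjugacy class of $\prod_j a_j^{\beta_j}\in\pi_1(T_1)*\pi_1(T_2)\hookrightarrow\pi_1(M)$.

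I would define $\gamma(N,w)$ as the conjugacy class whose winding vectors $\beta_j$ are aligned with the preferred generators $a\subset T_1$ and $b\subset T_2$, so that $\gamma(N,w)\in\langle a,b\rangle$ and the last item of (*) makes the action functional well-defined. A direct computation using $\omega=\sum dv_i\wedge dx_i$ and a cylinder joining a base loop on the zero section to $p$ yields
\begin{equation*}
\mathcal{A}_{\tau(N,w)}(p)=\sum_{j=1}^{2m}k_jN\bigl(h(|v_j|)-\rho(|v_j|)|v_j|^2\bigr)+O(1/N).
\end{equation*}
On the region $|v_j|\le\epsilon/2$ this simplifies to $-k_jN(|v_j|^2/2+7\epsilon^2/24)$, whereas on the smoothing region $|v_j|\to\epsilon$ the summand tends to $0$, producing an action gap of order $k_jN\epsilon^2$ between the ``fast'' and ``slow'' admissible tuples realising the same conjugacy class. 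In the special case $w=(a^{k_1}b^{k_2})^m$, I would instead pick $\gamma(N,w)$ to be the class of a genuine $m$-periodic orbit of $\phi=\tau_1^{k_1N}\tau_2^{k_2N}$ whose velocity sequence has minimal period $m$; its $m$ distinct $\phi$-iterates supply $m$ fixed points of $\phi^m=\tau(N,w)$ in the same primitive conjugacy class (they are cyclic shifts of the same cyclic word in $\pi_1(T_1)*\pi_1(T_2)$, primitive because the sequence has minimal period $m$), and all share the same action and Conley--Zehnder index by $\phi$-equivariance.

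For the final two steps, I would use the Conley--Zehnder computation in Section \ref{cz}, which assigns to each admissible tuple an index whose leading behaviour is linear in $N$. Choosing $r$ so that the fixed points described above are exactly the index-$r$ generators of $\gamma(N,w)$ (one generic fast orbit in the general case, an $m$-tuple in the special case), Floer-trajectory preservation of the free homotopy class together with the fact that $\partial$ drops the Conley--Zehnder index by one forces every $q\in\partial^{-1}(p)$ to lie in $\gamma(N,w)$ with index $r+1$ and to correspond to a distinct admissible tuple, typically a ``slow'' orbit in the smoothing region. The action-gap estimate above then yields $\mathcal{A}_{\tau(N,w)}(q)-\mathcal{A}_{\tau(N,w)}(p)\ge c(w,\epsilon)N-O(1)\ge\delta N$ for $N>C(w,\epsilon)$, and taking $\delta=c/2$ finishes the argument. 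The main obstacle I expect is the quantitative control of all possible Floer partners $q$: one must verify that every index-$(r+1)$ generator in $\gamma(N,w)$ lies in the smoothing region with action bounded below by $\mathcal{A}(p)+\delta N$. This requires precise estimates on $h(r)-\rho(r)r^2$ across $r\in(\epsilon/2,\epsilon]$ and on the behaviour of the index at the boundary $r=\epsilon/2$, together with a careful use of the unique-reduced-form hypothesis to preclude unexpected coincidences in $\pi_1(M)$ between ``fast'' and ``slow'' configurations that would produce low-action generators close to $\mathcal{A}(p)$.
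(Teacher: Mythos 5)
Your outline follows the paper's proof essentially step for step: parametrize the fixed points in a chosen free homotopy class by the recursion relating consecutive velocities, observe that each segment admits exactly two admissible solutions (a ``fast'' root $r^-<\epsilon/2$ in the quadratic region and a ``slow'' root $r^+>\epsilon/2$ in the smoothing region), so that the class carries $2^{2m}$ fixed points indexed by sign data, single out the configuration of minimal Conley--Zehnder index, and convert the fast/slow dichotomy into an action gap of order $N\epsilon^2$ for any Floer partner. Your treatment of the special case $w=(a^{k_1}b^{k_2})^m$ via the $m$ cyclic shifts of an asymmetric (minimal-period) configuration is also exactly the paper's argument.

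Two points need correction or completion. First, the Conley--Zehnder index is \emph{not} ``linear in $N$'': each $\tau_i^{k_jN}$ contributes a symplectic shear, whose Robbin--Salamon index depends only on the signature of the associated symmetric matrix (Theorem \ref{RSshear}), so Proposition \ref{czindex} gives an index that is bounded independently of $N$ and determined entirely by the signs $(\sigma_i,\xi_i)$, with each sign flip changing the index by exactly $\pm1$. This is precisely what makes the mechanism work: the unique minimal-index generator $p$ has all signs chosen extremally, so any index-$(r+1)$ generator differs from $p$ in exactly one sign, hence in exactly one fast-versus-slow root, hence by $\gtrsim N\epsilon^2/\max_j|k_j|$ in action. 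Second, you flag as ``obstacles'' the steps where the actual work lies: (i) the winding vectors must be chosen with $\frac{N\epsilon}{4}\le|\alpha_i|,|\beta_i|\le\frac{N\epsilon}{3}$ so that the equation $r\rho(r)=|\beta|/(|k|N)$ has two roots uniformly separated from each other and from the degenerate value $\epsilon/2$ --- this is what disposes of your worry about the boundary $r=\epsilon/2$ and produces a uniform $\delta$; (ii) existence and uniqueness of the fixed point for each sign configuration does not follow from the quantization heuristic, since the coordinates are coupled through $O(1/N)$ terms, and requires the contraction argument of Lemma \ref{persig}; and (iii) additivity of the Robbin--Salamon index under concatenation of the $2m$ shears (Lemmas \ref{nondegRS} and \ref{degRS}) must be established before the index can be read off segment by segment. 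With these supplied, your outline becomes the paper's proof.
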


\subsection{Proof of the main theorems}
We now use the Floer information in Proposition \ref{deltaA} to prove Theorems \ref{nokthroot} and \ref{F2incone}.

\begin{proof}[Proof of Theorem \ref{nokthroot}]
    For a prime number $k \ge 2$, $k \in \mathbb{N}$, take $f_N = \tau(N, ab)$ for $ab \in \mathbb{F}_2 = \langle a, b\rangle$. Then the $k$-periodic points of $f_N$ are exactly the fixed points of $\tau(N, (ab)^k)$. By Proposition \ref{deltaA}, there exists an index $r$ and a primitive orbit class $\gamma$ such that there exists exactly $k$ fixed points of $\tau(N, (ab)^k)$ of index $r$ and the action difference between any fixed point $p$ of index $r$ and any $q \in \partial^{-1}(p)$ is at least $\delta N$.

    Therefore, by Proposition \ref{dishamk}, we have $d_H(f_N, \mathrm{Ham}^k(M)) \ge \frac{1}{4k}\delta N$, which implies that $d_H(f_N, \mathrm{Ham}^k(M)) \to \infty$ as $N \to \infty$.

    If $k$ is not a prime integer, then $\mathrm{Ham}^k(M)\subset \mathrm{Ham}^{k^\prime}(M)$ for some prime number $k^\prime$, therefore, we still have $d_H(f_N, \mathrm{Ham}^k(M)) \to \infty$ as $N \to \infty$.
\end{proof}

\begin{proof}[Proof of Theorem \ref{F2incone}]
    Consider the embedding $\mathbb{F}_2 \to \mathrm{Cone}_\mathcal{U}(\mathrm{Ham}(M), d_H)$ defined by $w \mapsto [(\tau(N,w))_{N \in \mathbb{N}}]$, where $\tau(N,w)$ is given in Equation \ref{taunw}. This defines a group homomorphism because $\tau(N, ww') = \tau(N,w) \circ \tau(N,w')$ by construction.

    We claim that $$\liminf_{N \to \infty} \frac{\|\tau(N,w)\|_H}{N} > 0$$ for any nontrivial $w \in \mathbb{F}_2$. Then $\lim_{\mathcal{U}} \frac{\|\tau(N,w)\|_H}{N} > 0$ for any nontrivial $w$, which shows the homomorphism is injective.

    Now prove the claim that $\liminf_{N \to \infty} \frac{\|\tau(N,w)\|_H}{N} > 0$ for any non-trivial word $w \in \mathbb{F}_2$.

    For any $w \in \mathbb{F}_2$ that is not conjugate to $a^k$ or $b^k$, it is conjugate to a word $w' = a^{k_1}b^{k_2}\cdots a^{k_{2m-1}}b^{k_{2m}}$ with $k_i \neq 0$. By Proposition \ref{deltaA}, the boundary depth satisfies 
    \[
    \beta_{\gamma(N,w')}(\tau(N,w')) \ge \delta N.
    \]
    Therefore, by Proposition \ref{bounddepth}, 
    \[
    \|\tau(N,w)\|_H = \|\tau(N,w')\|_H \ge \beta_{\gamma(N,w')}(\tau(N,w')) \ge \delta N
    \]
    for sufficiently large $N$.

    For $w \in \mathbb{F}_2$ conjugate to $a^k$ or $b^k$, we use the triangle inequality and bi-invariance of the Hofer norm. For example, for $w = a^k$:
    \[
    2\|\tau_1^{kN}\|_H = \|\tau_1^{kN}\|_H + \|\tau_2^N \tau_1^{kN} \tau_2^{-N}\|_H \ge \|\tau_2^N \tau_1^{kN} \tau_2^{-N} \tau_1^{-kN}\|_H.
    \]
    The right-hand side is $\|\tau(N, w')\|_H$ for a commutator word $w'$ which is nontrivial, so by the previous case, $\|\tau(N, w')\|_H \ge \delta N$. Thus, $\|\tau_1^{kN}\|_H \ge \delta N/2$.  A similar inequality also holds for $\tau_2^{kN}$. Therefore, for $w\in \mathbb{F}_2$ that is conjugate to $\alpha^k$ or $\beta^k$, there also exists $\delta>0$ such that $||\tau(N,w)||_H\ge \delta N$ for sufficiently large $N$.
\end{proof}

To prove Proposition \ref{deltaA}, we shall describe all fixed points of $\tau(N,w)$ in a certain class $\gamma$ and estimate their actions and Conley-Zehnder indices.

\subsection{Fixed points in a certain class}

Suppose $p = (v_0, x_0) \in [-\frac{1}{2}, \frac{1}{2}]^n \times [-\frac{1}{2}, \frac{1}{2}]^n$ is a fixed point of $\tau(N,w)$, whose corresponding orbit is in the class $\gamma = \alpha_1 * \beta_1 * \alpha_2 * \beta_2 * \cdots * \alpha_m * \beta_m \in \pi_1(M, *)$, where the base point is the plumbing point of the two tori. Here we take $\alpha_i\in \pi_1(T_1), \beta_i\in \pi_1(T_2)$, which we also view as elements of $\Z^n$. (Note this is different from the assumption that the orbit class is $\gamma(N,w)$ in the loop space, where $\gamma$ only needs to be in the conjugacy class of $\gamma(N,w)$.) 

Suppose $0 \neq \alpha_i, \beta_i \in \mathbb{Z}^n$. Let 
\[
(v_i, x_i) = \tau_{1,\frac{1}{N^2}}^{k_{2m-2i-1}N} \circ \tau_{2,\frac{1}{N^2}}^{k_{2m-2i}N} \circ \cdots \circ \tau_{1,\frac{1}{N^2}}^{k_{2m-1}N} \circ \tau_{2,\frac{1}{N^2}}^{k_{2m}N} (v_0, x_0),
\]
then by our construction, we have 
\begin{equation}\label{perpts}
    \begin{array}{c}
        x_{i+1} = x_i + Nk_{2m-2i}\rho_{\frac{1}{N^2}}(|v_i|)v_i - \alpha_i \in [-\frac{1}{2}, \frac{1}{2}]^n, \\
        v_{i+1} = v_i - Nk_{2m-2i-1}\rho_{\frac{1}{N^2}}(|x_{i+1}|)x_{i+1} - \beta_i \in [-\frac{1}{2}, \frac{1}{2}]^n, \\
        x_m = x_0, \quad v_m = v_0.
    \end{array}
\end{equation}
Recall that we assume $k_i \neq 0$ for all $1 \le i \le 2m$.

We shall show that for certain choices of $\gamma$ and large enough $N$, the Hamiltonian diffeomorphism $\tau(N,w)$ has exactly $2^{2m}$ fixed points in the class $\gamma$, and we shall estimate their actions.

For $1 \le i \le 2m$, choose $\alpha_i \in \langle a \rangle$, $\beta_i \in \langle b \rangle$ to be of length $\frac{N\epsilon}{4} \le |\alpha_i|, |\beta_i| \le \frac{N \epsilon}{3}$. Suppose $N > \frac{10 \max_i |k_i|}{\epsilon}$.

By Equation \ref{perpts}, we have for $0 \le i \le m$ that 
\begin{equation}\label{asympx}
    x_i \rho_{\frac{1}{N^2}}(|x_i|) = \frac{-\beta_{i-1} - v_i + v_{i-1}}{k_{2m-2i-1}N} = \frac{-\beta_{i-1}}{k_{2m-2i-1}N} + \frac{v_{i-1} - v_i}{k_{2m-2i-1}N}.
\end{equation}

This implies that $\frac{1}{N^2} < |x_i| < \epsilon - \frac{1}{N^2}$ and $||x_i| - \frac{\epsilon}{2}| < \frac{1}{N^2}$. Similarly, we have $|v_i|, |v_{i+1}|$ lie in the region where $\rho = \rho_{\frac{1}{N^2}}$. Combining with Equation \ref{asympx}, this shows that $x_i \in B_i^- \cup B_i^+$, where 
\[
B_i^* = B\left( r_i^* \cdot \frac{\beta_{i-1}}{|\beta_{i-1}|}, \frac{\max_j |k_j|}{N} \right)
\]
and $r_i^+, r_i^- \in \mathbb{R}$ are roots of the equation 
\[
r \rho(|r|) = -\frac{|\beta_{i-1}|}{k_{2m-2i-1}N}
\]
satisfying $\frac{\epsilon}{4 \max_i |k_i|} < |r_i^-| < \frac{\epsilon}{3} < \frac{\epsilon}{2} < |r_i^+| < \epsilon$. Similarly, we may show that $v_i \in C_i^- \cup C_i^+$, where 
\[
C_i^* = B\left( s_i^* \cdot \frac{\alpha_i}{|\alpha_i|}, \frac{\max_j |k_j|}{N} \right),
\]
and $s_i^-, s_i^+ \in \mathbb{R}$ are roots of the equation 
\[
s \rho(|s|) = \frac{|\alpha_i|}{k_{2m-2i}N}
\]
satisfying $\frac{\epsilon}{4 \max_i |k_i|} < |s_i^-| < \frac{\epsilon}{3} < \frac{\epsilon}{2} < |s_i^+| < \epsilon$.

\begin{lem}\label{persig}
    Suppose $k_i \neq 0$, and $N$ is sufficiently large. Then for any set of signs $\sigma_i, \xi_i \in \{+, -\}$, there exists exactly one periodic point satisfying Equation \ref{perpts} with $x_i \in B_i^{\sigma_i}$ and $v_i \in C_i^{\xi_i}$.
\end{lem}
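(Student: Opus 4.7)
The plan is to reformulate the system \eqref{perpts} inside the sign-determined region $K = \prod_{i=1}^m \overline{B_i^{\sigma_i}} \times \prod_{i=1}^m \overline{C_i^{\xi_i}}$ as a fixed-point equation for a map $F: K \to K$, and then apply the Banach fixed-point theorem. The essential input is that the map $r \mapsto r\rho_{1/N^2}(|r|)$ has a well-defined local inverse on each of the sign regions, with a Lipschitz constant depending only on $\epsilon$ and $\max_i |k_i|$, not on $N$.

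First I would record the inversion step. Rearranging the second line of \eqref{perpts} gives \eqref{asympx}, and an analogous identity for $v_i$ reads
\[
v_i \rho_{1/N^2}(|v_i|) = \frac{\alpha_i}{k_{2m-2i}N} + \frac{x_{i+1} - x_i}{k_{2m-2i}N}.
\]
On $B_i^-$ the smoothed profile satisfies $\rho_{1/N^2} \equiv 1$, so $r \mapsto r\rho_{1/N^2}(|r|)$ is the identity and trivially invertible; on $B_i^+$ the profile is $\rho_{1/N^2}(r) = 2(\epsilon - r)/\epsilon$, and the derivative of $r \mapsto 2r(\epsilon - |r|)/\epsilon$ at $r_i^+$ equals $2(\epsilon - 2|r_i^+|)/\epsilon$, which is bounded away from zero because $|\beta_{i-1}|/(|k_{2m-2i-1}|N) \le \epsilon/(3 \min_i |k_i|)$ forces $r_i^+$ to be bounded away from the critical point $\epsilon/2$ of $f(r) = 2r(\epsilon-r)/\epsilon$. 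The same analysis applies for $v_i$ in $C_i^{\xi_i}$. Consequently, for each sign choice there is a smooth local inverse $(x\rho_{1/N^2}(|x|))^{-1}_{\sigma_i}: \text{image} \to B_i^{\sigma_i}$ with a Lipschitz constant $L = L(\epsilon, \max |k_i|)$.

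Next I would define $F$ on $K$ by letting the new $x_i'$ be the preimage under this local inverse (on $B_i^{\sigma_i}$) of the right-hand side of \eqref{asympx}, and similarly for $v_i'$. The ``constant'' term in the right-hand side is $-\beta_{i-1}/(k_{2m-2i-1}N)$, whose preimage is by definition the center $r_i^{\sigma_i} \beta_{i-1}/|\beta_{i-1}|$ of $B_i^{\sigma_i}$; the ``coupling'' term $(v_{i-1}-v_i)/(k_{2m-2i-1}N)$ is of size $O(1/N)$ since $|v_{i\pm 1}|, |v_i| \le \epsilon$. Applying the Lipschitz bound $L$ on the inverse, $F$ maps $K$ into itself provided $N$ is large enough that $L \cdot O(1/N) < \max_j|k_j|/N$, and moreover
\[
\|F(X) - F(Y)\|_\infty \le \frac{L}{N \min_j |k_j|} \|X - Y\|_\infty
\]
for any $X, Y \in K$. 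For $N$ sufficiently large this is a strict contraction, so the Banach fixed-point theorem produces a unique fixed point, which by construction is the unique periodic point of $\tau(N,w)$ with the prescribed signs.

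The main obstacle is controlling the local Lipschitz constant of the inverse uniformly in $N$, and in particular ensuring that $r_i^+$ is quantitatively bounded away from the critical point $|r|=\epsilon/2$ where $f'$ vanishes. This is the reason for imposing $|\alpha_i|, |\beta_i| \in [N\epsilon/4, N\epsilon/3]$: the resulting range of right-hand sides is confined to a subinterval of the image of $f$ bounded away from the maximum, so $|r_i^+| - \epsilon/2$ is bounded below by a positive constant depending only on $\epsilon$ and $\max_i |k_i|$. Once this is in hand, the contraction estimate and the invariance $F(K) \subset K$ are both straightforward consequences of the bound $\max_j |k_j|/N$ on the radius of $B_i^{\sigma_i}$ and $C_i^{\xi_i}$, completing the proof.
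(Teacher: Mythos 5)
Your proposal is correct, but it reaches the conclusion by a genuinely different mechanism than the paper. The paper works with the \emph{forward} map $f$ built from the full composed dynamics on $V=\prod \overline{C_i^{\xi_i}}\times\overline{B_i^{\sigma_i}}$: it first shows $f$ is expanding by a factor $\sim N/(5\max_j|k_j|)$ in the sup-norm (hence injective), then proves the surjectivity statement $V\subset f(V)$ by bounding $\|Df_p\|$ from below and checking that $f(V)$ contains a ball of radius $\tfrac1{10}$ about $f(p_0)$, and only then applies Banach to $f^{-1}:V\to V$. You instead rearrange the equations into the form \eqref{asympx}, invert only the decoupled radial profile $r\mapsto r\rho_{1/N^2}(|r|)$ on each sign region (with a Lipschitz constant $L$ controlled by $\rho(|r_i^+|)\gtrsim 1/\max_j|k_j|$ and by $|r_i^+|$ being quantitatively separated from $\epsilon/2$), and obtain a self-map $F$ of $K$ that is directly a contraction with factor $O(1/N)$. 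This buys you a real simplification: the paper's Step 2 (the covering argument $V\subset f(V)$) is the delicate part of its proof, and your implicit reformulation removes it entirely, replacing a global surjectivity estimate by a one-coordinate inversion. The only imprecision worth flagging is your phrase that $F(K)\subset K$ holds ``provided $N$ is large enough that $L\cdot O(1/N)<\max_j|k_j|/N$'': both sides scale as $1/N$, so this is not a large-$N$ condition but a constraint on the constants, namely $2L\epsilon/\min_j|k_j|\le\max_j|k_j|$; it is satisfied here because $L\le 4\max_j|k_j|$ and $\epsilon<0.01$, but you should state it as such rather than attribute it to $N$.
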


\begin{proof}
    Let $V \subset \mathbb{R}^{2mn}$ be the closure of $C_1^{\xi_1} \times B_1^{\sigma_1} \times \cdots \times C_m^{\xi_m} \times B_m^{\sigma_m}$. Consider the map $f: V \to \mathbb{R}^{2mn}$ with coordinates 
    \[
    (a_0, b_0; a_1, b_1; \cdots; a_{m-1}, b_{m-1}), \quad a_i, b_i \in \mathbb{R}^n,
    \]
    given by 
    $$
        f(a_0, b_0; \cdots; a_{m-1}, b_{m-1}) $$$$= ( a_{m-1} + Nk_2 \rho(|b_{m-1}|) b_{m-1} - a_{m-1},
         b_{m-1} - Nk_{1} \rho(|a_m|) a_m - b_m;$$$$
         a_0 + Nk_{2m} \rho(|b_0|) b_0 - a_1,  b_0 - Nk_{2m-1} \rho(|a_0|) a_0 - b_1; $$$$
         \cdots; $$$$
         a_{m-2} + Nk_4 \rho(|b_{m-2}|) b_{m-2} - a_{m-2},  b_{m-2} - Nk_3 \rho(|a_{m-1}|) a_{m-1} - b_{m-2}).$$
  
    Then proving the lemma is equivalent to showing that $f$ restricted to $V$ has a unique fixed point.

    \textbf{Step 1}:For $p=(a_0,b_0;\cdots;a_{m-1},b_{m-1})\in V$, we take its norm to be $|p|_\infty=max\{|a_0|,|b_0|,\cdots,|a_{m-1}|,|b_{m-1}|\}$. We claim that $$|f(p)-f(p^\prime)|_\infty\ge (\frac{N}{5 \max_j|k_j|}-1)|p-p^\prime|_\infty$$ for any $p,p^\prime \in V$. In particular, this shows that $f$ is injective, and any fixed point must be unique. 

    To prove this claim, we take $p = (a_0, b_0; \cdots; a_{m-1}, b_{m-1})$ and $p' = (a_0', b_0'; \cdots; a_{m-1}', b_{m-1}') \in V$. Assume without loss of generality that 
    \[
    \max_j |a_j - a_j'| \ge \max_j |b_j - b_j'|,
    \]
    and let $i$ be such that $|a_i - a_i'| = \max_j |a_j - a_j'|$. Then $|a_i - a_i'| \ge |b_j - b_j'|$ for all $1 \le j \le m$. By the definition of $f$, we have 
    \begin{align*}
        |b_i(f(p)) - b_i(f(p'))| &\ge N |k_{2m-2i-1}| \cdot \left| \rho(|a_i|) a_i - \rho(|a_i'|) a_i' \right| - |b_{i-1} - b_{i-1}'| \\
        &\ge \left( \frac{N}{5 \max_j |k_j|} - 1 \right) |a_i - a_i'|,
    \end{align*}
    since $a_i, a_i' \in C_i^{\xi_i}$. Therefore, 
    \[
    |f(p) - f(p')|_\infty \ge |b_i(f(p)) - b_i(f(p'))| \ge \left( \frac{N}{5 \max_j |k_j|} - 1 \right) |p - p'|_\infty.
    \]
    The case where $\max_j |a_j - a_j'| \le \max_j |b_j - b_j'|$ is similar.

    \textbf{Step 2}: we show that $V\subset f(V)$. By our definition of $V$ and $f$, $f$ is a smooth diffeomorphism from $V$ to $f(V)$. For $p=(a_1,b_1,...,a_m,b_m)\in V$, we have $$Df(p)=\begin{pmatrix}
    0&-L_1(a_1)&I&0&\cdots&0\\
     0&0 &K_2(b_1)&I&\ddots&\vdots\\
     \vdots&\cdots &\ddots&\ddots&\ddots&0\\
       0&\cdots &\cdots&0&K_m(b_{m-1})&I\\
            I&0 &\cdots&0&0&-L_m(a_{m})\\
                   K_1(b_m)&I&0 &\cdots&0&0 \end{pmatrix}$$ where $$L_i=k_{2m-2i+1}N(\rho(|a_i|)I_n+\frac{\rho^\prime(|a_i|)}{|a_i|}a_ia_i^T)$$$$=\left\{\begin{array}{cc}
   k_{2m-2i+1}NI_n  & \xi_i=-  \\
   k_{2m-2i+1}N \cdot P\cdot diag(\frac{2(\epsilon-2|a_i|)}{\epsilon},\rho(|a_i|),\cdots,\rho(|a_i|))P^{-1}& \xi_i=+
\end{array}\right.$$ where $P$ is an invertible matrix with first column $a_i$; and $$K_i(x)=k_{2i}N(\rho(|x|)I_n+\frac{\rho^\prime(|x|)}{|x|}xx^t.$$ 

This shows that for $v=(u_1,v_1, ...,u_m,v_m)\in T_pV$, we have $$|Df(p)(v)|^2=\sum|K_iu_i+v_i|^2+|-L_iv_i+u_{i+1}|^2$$$$\ge \sum(|K_iu_i|^2+|L_iv_i|^2)-2\sum(|K_iu_i||v_i|+|L_iv_i||u_{i+1}|)$$$$\ge\sum \frac{N^2}{25 \max_j |k_j|^2}(|u_i|^2+|v_i|^2)-2\sum \max_j |k_j|\cdot N (|u_i||v_i|+|v_i||u_{i+1}|)$$
$$\ge (\frac{N^2}{25 \max_j |k_j|^2}-4 \max_j|k_j|\cdot N ) |v|^2.$$

Therefore, for $N>200  \max_j|k_j|^3$, we have $||Df_p||\ge \frac{N}{10 \max_j |k_j|}$.

Take $p_0=(s^{\xi_1}_1\cdot \frac{\alpha_1}{|\alpha_1|}, r_1^{\sigma_1}\cdot \frac{\beta_1}{|\beta_1|},...,s^{\xi_m}_m\cdot \frac{\alpha_m}{|\alpha_m|}, r_m^{\sigma_m}\cdot \frac{\beta_m}{|\beta_m|}).$ Then we have $f(p_0)=(s^{\xi_m}_m\cdot \frac{\alpha_m}{|\alpha_m|}, r_m^{\sigma_m}\cdot \frac{\beta_m}{|\beta_m|},s^{\xi_1}_1\cdot \frac{\alpha_1}{|\alpha_1|}, r_1^{\sigma_1}\cdot \frac{\beta_1}{|\beta_1|},...,s^{\xi_{m-1}}_{m-1}\cdot \frac{\alpha_{m-1}}{|\alpha_{m-1}|}, r_{m-1}^{\sigma_{m-1}}\cdot \frac{\beta_{m-1}}{|\beta_{m-1}|})\in (-\epsilon,\epsilon)^{2mn}.$

Therefore, we see that $f(V)$ contains a ball of radius at least $\frac{1}{10}$, centered at $f(p_0)$. Combining with the assumption $\epsilon<0.01$, this shows that $V\subset (-\epsilon,\epsilon)^{2mn}\subset f(V)$. 

    \textbf{Step 3}: From Step 1 and 2, we see that $f^{-1}: V\to V$ is a diffeomorphism that is contracting under the norm defined in Step 1. Therefore by Banach fixed point theorem, $f^{-1}$ has a unique fixed point in $V$. This shows that $f$ also has a unique fixed point in $V$.
\end{proof}

\subsection{Proof of Proposition \ref{prop:denseorbits}}
Now we have the characterization of fixed points of $\tau(N,w)$ given in Lemma \ref{persig}, we may give a proof of Proposition \ref{prop:denseorbits}.

\begin{proof}[Proof of Proposition \ref{prop:denseorbits}]
    Let $f_n=\tau(n,ab)$ as in the proof of Theorem \ref{nokthroot}, then we see that $k$-periodic points of $f_n$ are exactly fixed points of $\tau(n,(ab)^k)$.
    By Lemma \ref{persig}, for any family of curves $\alpha_1,\alpha_2,\ldots, \alpha_k;\beta_1,\beta_2, \ldots,\beta_k$ with $\frac{n\epsilon}{4}\le|\alpha_i|,|\beta_i|\le \frac{n\epsilon}{3}$, there exists $2^{2k}$ many $k$-periodic point of $f_n$ in class $\gamma=\alpha_1*\beta_1*\ldots *\alpha_k*\beta_k$. Therefore, the number of $k$-periodic points of $f_n$ grows exponentially in $k$.

    Pick $U=\{(v,x)\in T^*T_1: \frac{\epsilon}{4}< |v|,|x|< \frac{\epsilon}{3}\}$. For each $n$, we take a family curves $\alpha_1(n),\alpha_2(n),\ldots,\alpha_{k(n)}(n);\beta_1(n),\beta_2(n),\ldots,\beta_{k(n)}(n)$, so that the set $\{(\frac{\alpha_{i}(n)}{n},-\frac{\beta_{i-1}(n)}{n}):1\le i\le k(n)\}$ consists of all pairs of rational vectors with denominator $n$ in $U$. Take $p_n$ to be the fixed point of $\tau(n,(ab)^{k(n)})$ in the class $\alpha_1*\beta_1*\ldots *\alpha_{k(n)}*\beta_{k(n)}$ of signature $\sigma_i=-,\xi_i=-$ for all $1\le i\le k(n)$. Then by Lemma \ref{persig}, such fixed point $p_n$ exists, and $f_n^k(p_n)\in B(\frac{\alpha_{i}(n)}{n},\frac{1}{n})\times  B(-\frac{\beta_{i-1}(n)}{n},\frac{1}{n})$. 

   By our construction, for any open set $V\subset U$, there exists $N>0$, such that for any $n>N$, there exists $1\le i\le k(n)$ such that $B(\frac{\alpha_{i}(n)}{n},\frac{1}{n})\times  B(-\frac{\beta_{i-1}(n)}{n},\frac{1}{n})\subset V$. Therefore, we have $f_n^i(p_n)\in V$, and so $\mathcal{O}(f_n,p_n)\cap V\neq \varnothing$.
\end{proof}

\subsection{Action estimate}
We now estimate the action of fixed points in the orbit class $\gamma$.

\begin{prop}\label{pact}
    Suppose $k_i \neq 0$ and $N$ is sufficiently large. Take the Hamiltonian flow generating $\tau(N,w)$ as in Equation \ref{hampath}. Suppose $p = (v_0, x_0)$ is a fixed point of $\tau(N,w)$ satisfying Equation \ref{perpts}, such that $x_i \in B_i^{\sigma_i}$ and $v_i \in C_i^{\xi_i}$ for a set of signs $\sigma_i, \xi_i \in \{+, -\}$. Then 
    \[
    \mathcal{A}(p) = \sum_{i=0}^{m-1} \left( \mathcal{A}_{2i} + \mathcal{A}_{2i+1} \right),
    \]
    where 
    \[
    \mathcal{A}_{2j} = N k_{2m-2j} \left( h(r_j^{\sigma_j}) + r_j^{\sigma_j} \frac{|\beta_j|}{N k_{2m-2j}} \right) + O(1)
    \]
    and 
    \[
    \mathcal{A}_{2j+1} = N k_{2m-2j-1} \left( h(s_j^{\xi_j}) + s_j^{\xi_j} \frac{|\alpha_j|}{N k_{2m-2j}} \right) + O(1),
    \]
    where $h$ is the function given in Equation \ref{funch}.
\end{prop}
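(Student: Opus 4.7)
The plan is to compute $\mathcal{A}(p) = \int_0^1 H_t(\gamma(t))\,dt - \int_{\bar\gamma}\omega$ directly by splitting along the $2m$ pieces of the concatenated Hamiltonian isotopy (\ref{hampath}). On each piece exactly one of the plumbing coordinates $v$ or $x$ is preserved by the twist flow, so the orbit segment is fully explicit, and the contribution of that segment to each of the two terms in the action functional can be computed separately.

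For the Hamiltonian part, on a $\tau_1$-type segment the reparametrized Hamiltonian is $2m\cdot kN\cdot h_{1/N^2}(|v|)$ with $v$ constant along the segment. Integrating over the length-$1/(2m)$ subinterval yields $kN\cdot h(|v_\bullet|) + O(1/N^2)$, where the error absorbs the $\delta = 1/N^2$ smoothing. By Lemma \ref{persig}, $|v_\bullet|$ equals $s_i^{\xi_i}$ up to $O(1/N)$, and smoothness of $h$ then yields $kN\cdot h(s_i^{\xi_i}) + O(1)$. The identical computation on a $\tau_2$-type segment produces $kN\cdot h(r_i^{\sigma_i}) + O(1)$.

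For the symplectic part I would build a piecewise capping cylinder. On a $\tau_1$-segment the orbit traces $\tau \mapsto (v_\bullet, x_\bullet + \tau\rho(|v_\bullet|)v_\bullet)$; deform this to the zero-section curve $\tau \mapsto (0, x_\bullet + \tau\rho(|v_\bullet|)v_\bullet)$ via the linear homotopy $(s,\tau)\mapsto ((1-s)v_\bullet,\, x_\bullet + \tau\rho(|v_\bullet|)v_\bullet)$, whose zero-section boundary realizes the cycle $\alpha_i$ in $T_1$; analogously on $\tau_2$-segments the boundary realizes $\beta_i$ in $T_2$. Stitching these cylinders along the transition points produces a global capping of $\gamma$ relative to the zero-section reference loop $\alpha_1 * \beta_1 * \cdots * \alpha_m * \beta_m$. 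Pulling back $\omega = dv\wedge dx$ on each $\tau_1$-cylinder yields $\int \omega = -kN\rho(|v_\bullet|)|v_\bullet|^2$, and the defining root equation $s\rho(s) = |\alpha_i|/(k_{2m-2i}N)$ together with $|v_\bullet| = s_i^{\xi_i} + O(1/N)$ rewrites this contribution as $s_i^{\xi_i}\cdot|\alpha_i| + O(1)$; the $\tau_2$-cylinders give the corresponding $r_i^{\sigma_i}\cdot|\beta_i|$ term via the root equation for $r$.

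Summing the Hamiltonian and symplectic contributions over the $2m$ segments and then reindexing produces the stated decomposition $\mathcal{A}(p) = \sum_{i=0}^{m-1}(\mathcal{A}_{2i} + \mathcal{A}_{2i+1})$ with the claimed leading-order expressions. The main obstacle will be careful bookkeeping: verifying that the linear deformation $(1-s)v_\bullet$ stays inside the plumbing chart (guaranteed by $|v_\bullet| < \epsilon < 0.01$); ensuring that successive cylinders join consistently at the transition points $(v_i, x_i)$, where the plumbing identification $J$ swaps the roles of base and fiber between the two charts; and tracking that the $O(1/N^2)$ smoothing error and the $O(1/N)$ approximation error from Lemma \ref{persig}, both multiplied by the $N$-scale Hamiltonian factors, remain $O(1)$ uniformly in $N$.
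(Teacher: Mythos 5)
Your proposal is correct and follows essentially the same route as the paper: the action is split over the $2m$ segments of the concatenation (\ref{hampath}), the Hamiltonian term is evaluated using the constancy of $|v|$ (resp.\ $|x|$) along each twist segment together with the explicit form of $h$ and the $O(1/N^2)$ smoothing, and the area term is reduced to $s_i^{\xi_i}|\alpha_i|$ and $r_i^{\sigma_i}|\beta_i|$ via the defining root equations. Your explicit capping cylinders compute exactly what the paper computes by integrating the Liouville primitive $\lambda$ over the orbit segments minus the reference arcs (Stokes applied to $\omega = d\lambda$), so the two arguments are the same up to presentation.
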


\begin{proof}
    Since the Hamiltonian flow is a concatenation of the $2m$ paths, the action of $p$ is the sum of the actions on the intermediate paths. We have 
    \[
    \mathcal{A}(p) = \sum_{i=0}^{m-1} \left( \mathcal{A}_{2i} + \mathcal{A}_{2i+1} \right),
    \]
    where 
    \begin{align*}
        \mathcal{A}_{2j} &= \int_0^1 H(\tau^{k_{2m-2j} N t}(x_j, v_j)) \, dt + \int_{\{\tau^{k_{2m-2j} N t}(x_j, v_j)\}} \lambda - \int_{\beta_j} \lambda \\
        &= N k_{2m-2j} \left( h(r_j^{\sigma_j}) + r_j^{\sigma_j} \frac{|\beta_j|}{N k_{2m-2j}} \right) + O(1),
    \end{align*}
    as $N \to \infty$, since $v_j \in B\left( r_j^{\sigma_j} \cdot \frac{\beta_j}{|\beta_j|}, \frac{1}{N} \right)$. Similarly,
    \begin{align*}
        \mathcal{A}_{2j+1} &= \int_0^1 H(\tau^{k_{2m-2j-1} N t}(v_{j+1}, -x_j)) \, dt - \int_{\{\tau^{k_{2m-2j-1} N t}(v_{j+1}, -x_j)\}} \lambda \\
        &= N k_{2m-2j-1} \left( h(s_j^{\xi_j}) + s_j^{\xi_j} \frac{|\alpha_j|}{N k_{2m-2j}} \right) + O(1).
    \end{align*}

\end{proof}


\section{Conley–Zehnder index}\label{cz}
In this section, we compute the Conley–Zehnder index of the fixed points of $\tau(N,w)$ in class $\gamma$ and give a proof of Proposition \ref{deltaA}.

We pick the framing on the reference loop $\gamma$ to be $\frac{\partial}{\partial x_i}, \frac{\partial}{\partial v_i}$ along $\alpha_i$ and $\frac{\partial}{\partial v_i}, -\frac{\partial}{\partial x_i}$ along $\beta_i$. 

We shall prove the following result.

\begin{prop}\label{czindex}
    Let $(v_0,x_0)$ be a fixed point of the linked twist map $\tau$ as given in Section \ref{ltm}, and let $(v_i,x_i)$ be the intermediate points of $(x_0,v_0)$ as given in Equation \ref{perpts}. Suppose $x_i \in B_i^{\sigma_i}$ and $v_i \in C_i^{\xi_i}$ for $\sigma_i, \xi_i \in \{+,-\}$. With abuse of notations, we also take $\sigma_i,\xi_i=\pm1$ according to its sign. Then the Conley–Zehnder index of $(v_0,x_0)$ is 
    \[
    i_{CZ}(\Gamma(t)) = n - \frac{1}{2} \sum_{i=1}^m \left( \mathrm{sign}(k_{2i-1})(\sigma_i + 1 - n) + \mathrm{sign}(k_{2i})(\xi_i + 1 - n) \right).
    \]
\end{prop}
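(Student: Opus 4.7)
The plan is to compute the Robbin--Salamon index of the path of symplectic matrices $\Gamma(t)$ obtained by linearizing the concatenated Hamiltonian isotopy in Equation \ref{hampath} along the periodic orbit, and to package the answer via the Conley--Zehnder convention.

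First, I would decompose $\Gamma = \Gamma_1 \ast \cdots \ast \Gamma_{2m}$ into the $2m$ sub-paths coming from the $2m$ consecutive autonomous flows $\tau_j^{k_j N s}$. With respect to the framing specified in the statement (which involves a $90^\circ$ rotation between $T_1$ and $T_2$ segments, arising from the plumbing identification $J$), each sub-path is a shear of the form
\[
s \mapsto \begin{pmatrix} I & 0 \\ s\,k_j N \cdot A(u_j) & I \end{pmatrix}
\quad \text{or}\quad
s \mapsto \begin{pmatrix} I & -s\,k_j N \cdot A(u_j) \\ 0 & I \end{pmatrix},
\]
where $A(u) = \rho(|u|)I + \rho'(|u|) uu^T / |u|$ is the Hessian-type matrix of the autonomous Hamiltonian $h(|u|)$ at the relevant orbit point $u_j \in \{v_i, x_i\}$.

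Next, I would diagonalize $A(u)$ to compute its signature. In the $\sigma = -$ (respectively $\xi = -$) region, where $|u| < \epsilon/2$, we have $\rho(|u|) = 1$ and $\rho'(|u|) = 0$, so $A(u) = I$ has signature $n$. In the $\sigma = +$ region, $A(u)$ has one negative eigenvalue $(2\epsilon - 4|u|)/\epsilon < 0$ along the $u$-direction and $n-1$ positive eigenvalues $\rho(|u|) > 0$ transverse to $u$, giving signature $n - 2$. Treating $\sigma, \xi \in \{\pm 1\}$, this yields the unified formula $\mathrm{sign}(A(u)) = n - 1 - \sigma$ (or $n - 1 - \xi$).

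Then I would compute the Robbin--Salamon index of each shear sub-path. Since each such path starts at the identity and evolves as an upper- or lower-triangular shear, its only crossing with the Maslov cycle is at $s = 0$; the crossing form there is represented, up to sign conventions, by $\pm k_j N \cdot A(u_j)$, contributing $\tfrac{1}{2}\mathrm{sign}(k_j) \cdot \mathrm{sign}(A(u_j)) = -\tfrac{1}{2}\mathrm{sign}(k_j)(\sigma_i + 1 - n)$ to the index (and the analogous expression in $\xi_i$). Summing all $2m$ contributions by additivity of the Robbin--Salamon index under concatenation, and adding the overall normalization $+n$ coming from the convention that $i_{CZ}$ equals the Morse index for small Morse perturbations of $H \equiv 0$, yields the claimed formula.

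The hardest part is the careful bookkeeping of Maslov crossings for the concatenated path. One must verify that each shear sub-path passes through the Maslov cycle only at $s = 0$, which requires that $A(u_j)$ be non-degenerate and remain so on $(0,1]$, so the orbit points must avoid the $\delta$-smoothing transition region --- guaranteed by Lemma \ref{persig} for sufficiently large $N$. One must also check that no new crossings are introduced at the gluing points between consecutive sub-paths, which follows from the fact that $\Gamma(1) - I$ is invertible (Lemma \ref{persig}) together with a genericity argument at intermediate gluing times. A second subtle point is tracking the effect of the plumbing map $J$ on the framing as the orbit transitions between $T_1$ and $T_2$; this coordinate swap is what produces the sign patterns that distinguish the $\sigma$-terms from the $\xi$-terms in the final formula.
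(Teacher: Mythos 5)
Your decomposition into $2m$ shear sub-paths, the signature computation $\mathrm{sign}\,A(u)=n-1-\sigma$, the per-shear index $-\tfrac12\mathrm{sign}(k_j)\,\mathrm{sign}(A(u_j))$ (which is Gutt's formula $i_{RS}=\tfrac12\mathrm{sign}\,B(0)-\tfrac12\mathrm{sign}\,B(1)$ for a symplectic shear), and the normalization $i_{CZ}=n-i_{RS}$ all match the paper. The gap is in the step you dispose of as ``additivity of the Robbin--Salamon index under concatenation'' plus ``a genericity argument at intermediate gluing times.'' The RS index is indeed additive over a concatenation of graph-Lagrangian paths, but the pieces after the first are the right-translates $t\mapsto \Gamma_j(t)\cdot P_j$ (with $P_j$ the product of the later time-one maps), and the Maslov index of such a translated path relative to the diagonal is \emph{not} $i_{RS}(\Gamma_j)$: it differs by a signature (H\"ormander/triple-index) correction depending on $\Gamma_j(1)$ and $P_j$. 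These corrections are a priori nonzero half-integers and do not vanish ``generically.'' The paper's Lemmas \ref{nondegRS} and \ref{degRS} exist precisely to kill them: one applies the concatenation formula of \cite{DDP} with Cayley-transform correction terms and shows, via explicit large-$N$ asymptotics of $(I-\Phi_i(1)\cdots\Phi_m(1))^{-1}$, that $\mathrm{sign}(M_{A(1)}+M_{B(1)})=0$, and separately that the $C_J$-corrections cancel. Your proposal contains no substitute for these computations.

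A second, related obstruction: each individual shear endpoint $\Gamma_{2j}(1)$ has $\Gamma_{2j}(1)-I$ non-invertible (the shear fixes an $n$-dimensional subspace; indeed the whole path $\Gamma_{2j}(t)$ lies in the degenerate locus for all $t$, so your claim that it ``passes through the Maslov cycle only at $s=0$'' is false as stated, though harmless for the standalone index thanks to Gutt's formula). Because of this structural degeneracy, the non-degenerate concatenation formula cannot even be applied to split a pair $\Gamma_{2j-1}\#\Gamma_{2j}$ into its two summands; the paper has to switch to a second concatenation formula based at $J$ rather than $I$ (Theorem \ref{degRSconcat}) and compute $\mathrm{sign}\,C_J(\Gamma_{2j}(1))=\mathrm{sign}\,C_J(\Gamma_{2j-1}(1))=-n$ and $\mathrm{sign}(C_J(\Gamma_{2j-1}(1))-C_J(\Gamma_{2j}(1)))=0$ so that the corrections cancel. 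Invertibility of $\Gamma(1)-I$ for the total path (which, incidentally, is not what Lemma \ref{persig} states, though it follows from the expansion estimate in its proof) does not help with these intermediate degeneracies. To complete your argument you would need to supply the analogue of these two lemmas.
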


We prove this by computing the Robbin–Salamon index of the intermediate paths and their concatenation.

Since the Hamiltonian flow is a concatenation of $2m$ paths, the differential along the Hamiltonian flow of a fixed point $(x_0,v_0)$ with intermediate points $(x_i,v_i)$ is given by $\Gamma(t) = \Gamma_{2m}(t) \# \Gamma_{2m-1}(t) \cdot \Gamma_{2m}(1) \# \cdots \# \Gamma_1(t) \cdot \Gamma_2(1) \cdots \Gamma_{2m}(1)$, where 
\[
\Gamma_{2i-1}(t) = d \left( \tau^{k_{2i-1} Nt}_{\alpha, \frac{1}{N^2}} \right) = \begin{pmatrix} I_n & L_i(v_{m-i}) t \\ 0 & I_n \end{pmatrix}
\]
and
\[
\Gamma_{2i}(t) = d \left( \tau^{k_{2i} Nt}_{\beta, \frac{1}{N^2}} \right) = \begin{pmatrix} I_n & 0 \\ -K_i(x_{m-i}) t & I_n \end{pmatrix}.
\]
Here the matrices are given by
\begin{multline}\label{Leig}
L_i(v) = k_{2i-1} N \left( \rho(|v|) I_n + \frac{\rho'(|v|)}{|v|} v v^t \right) \\
= \left\{ \begin{array}{cc}
   k_{2i-1} N I_n & v \in C_i^- \\
   k_{2i-1} N \cdot P \cdot \mathrm{diag}\left( \frac{2(\epsilon - 2|v|)}{\epsilon}, \rho(|v|), \cdots, \rho(|v|) \right) P^{-1} & v \in C_i^+
\end{array} \right.
\end{multline}
where $P$ is an invertible matrix with first column $v$; and
\begin{multline}\label{Keig}
K_i(x) = k_{2i} N \left( \rho(|x|) I_n + \frac{\rho'(|x|)}{|x|} x x^t \right) \\
= \left\{ \begin{array}{cc}
   k_{2i} N I_n & x \in B_i^- \\
   k_{2i} N \cdot Q \cdot \mathrm{diag}\left( \frac{2(\epsilon - 2|x|)}{\epsilon}, \rho(|x|), \cdots, \rho(|x|) \right) Q^{-1} & x \in B_i^+
\end{array} \right.
\end{multline}
where $Q$ is an invertible matrix with first column $x$.

We compute the Robbin–Salamon index of the intermediate paths using formulas for concatenation of paths given in \cite{Gutt} and \cite{DDP}. In particular, we prove the following lemmas.

\begin{lem}\label{nondegRS}
    If $k_i \neq 0$ and $N$ is sufficiently large, then the Conley–Zehnder index of $\Gamma(t)$ is equal to the sum of the Robbin–Salamon indices of the intermediate paths, i.e.,
    \[
    i_{RS}(\Gamma(t)) = \sum_{j=1}^m i_{RS} \left( \Gamma_{2j}(t) \# \Gamma_{2j-1}(t) \cdot \Gamma_{2j}(1) \right).
    \]
\end{lem}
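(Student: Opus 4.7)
The plan is to apply the additivity formula for the Robbin--Salamon index of shifted concatenations of symplectic paths from \cite{Gutt} and \cite{DDP}, after verifying the non-degeneracy conditions needed to ensure this additivity holds without correction terms at the junctions between pairs. Write $P_j(t) := \Gamma_{2j}(t) \# \Gamma_{2j-1}(t)\Gamma_{2j}(1)$ for the $j$-th intermediate path appearing on the right-hand side of the claim.

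First I would verify that each endpoint $P_j(1) = \Gamma_{2j-1}(1)\Gamma_{2j}(1)$ is non-degenerate. A direct block computation gives
\[
P_j(1) - I = \begin{pmatrix} -L_j K_j & L_j \\ -K_j & 0 \end{pmatrix},
\]
and a row/column swap together with block reduction yields $|\det(P_j(1) - I)| = \det(L_j)\det(K_j)$. By equations (\ref{Leig}) and (\ref{Keig}) together with the hypothesis $k_{2j-1}, k_{2j} \neq 0$, both $L_j$ and $K_j$ are invertible with entries scaling as $N$ (either equal to $kN \cdot I_n$ or of the form $kN \cdot P \cdot \mathrm{diag}(\cdot) \cdot P^{-1}$ with non-zero diagonal entries, since the relevant $|v|, |x|$ lie strictly inside the interval where $\rho$ and $\frac{2(\epsilon-2|\cdot|)}{\epsilon}$ are non-zero). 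Hence $P_j(1)$ is non-degenerate for $N$ sufficiently large. An analogous polynomial-in-$N$ analysis for the cumulative products $M_k = \prod_{i=m-k+1}^{m} P_i(1)$, $k=1,\ldots,m$, shows that all junctions between consecutive pairs are non-degenerate, and in particular $\Gamma(1) = M_m$ is non-degenerate so the CZ index of $\Gamma$ is well-defined.

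With these non-degeneracy conditions in hand, I would invoke the shifted-concatenation additivity formula from \cite{Gutt} and \cite{DDP}, which states that when $\Gamma$ is written as a shifted concatenation of paths $P_1,\ldots,P_m$ (each starting at $I$) with all intermediate products non-degenerate, the Robbin--Salamon indices add:
\[
i_{RS}(\Gamma) = \sum_{j=1}^{m} i_{RS}(P_j).
\]
Since $\Gamma(1)$ is non-degenerate, the Conley--Zehnder index of $\Gamma$ coincides with its Robbin--Salamon index, and the lemma follows.

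The main obstacle I expect is verifying the non-degeneracy of all the cumulative products $M_k$. The individual factors $\Gamma_j(1)$ have clean upper/lower triangular block structure, but successive products destroy this structure and $\det(M_k - I)$ becomes a complicated polynomial expression. The key observation is the $N$-scaling: each factor has entries of order $N$, so $\det(M_k - I)$ is a polynomial in $N$ whose leading coefficient can be read off from the dominant blocks and reduces, via the same block computation as in the single-pair case, to a non-vanishing product of quantities of the form $\det(K_i)\det(L_i)$. This forces $\det(M_k - I) \neq 0$ for all sufficiently large $N$, completing the reduction to the Gutt/DDP additivity formula.
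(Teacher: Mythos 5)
There is a genuine gap. The tool you invoke — ``when all intermediate products are non-degenerate, the Robbin--Salamon indices of a shifted concatenation add'' — is not a theorem, and no such statement appears in \cite{Gutt} or \cite{DDP}. The Robbin--Salamon index is additive under catenation of a \emph{single} path split at an interior time, but the right-hand side of the lemma involves the paths $\Gamma_{2j}(t)\#\Gamma_{2j-1}(t)\Gamma_{2j}(1)$ re-based at the identity, and $i_{RS}(B(t)A(1))\neq i_{RS}(B(t))$ in general. The actual concatenation formula (Corollary~3.5 and Lemma~3.3 of \cite{DDP}, stated as Theorem~\ref{nondegconcat} in the paper) reads
\[
i_{RS}(\{A(t)\}\#\{B(t)A(1)\}) = i_{RS}(A)+i_{RS}(B)-\tfrac{1}{2}\,\mathrm{sign}\bigl(M_{A(1)}+M_{B(1)}\bigr),
\]
where $M_P=\tfrac12 J(I+P)(I-P)^{-1}$ is the Cayley transform. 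The correction term does not vanish merely because $A(1)-I$ and $B(1)-I$ are invertible: for generic non-degenerate endpoints $\mathrm{sign}(M_{A(1)}+M_{B(1)})$ can be anything between $-2n$ and $2n$. Your non-degeneracy verification (which is correct as far as it goes, and matches the paper's computation of $(I-P)^{-1}$) is therefore necessary but nowhere near sufficient.

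The missing content — which is the actual heart of the paper's proof — is the asymptotic computation of the Cayley transforms. For each cumulative product $P=\Phi_i(1)\cdots\Phi_m(1)$ and each single endpoint $\Phi_i(1)$, one expands $(I-P)^{-1}$ in powers of $1/N$ using the fact that the blocks of $P$ scale like $N^{m-i+2}$, and finds
\[
M_P=\begin{pmatrix} 0 & \tfrac12 I_n\\ \tfrac12 I_n & 0\end{pmatrix}\cdot\bigl(1+O(1/N)\bigr),
\]
so that each relevant sum $M_{A(1)}+M_{B(1)}$ is a small perturbation of the non-degenerate matrix $\begin{pmatrix}0 & I_n\\ I_n & 0\end{pmatrix}$, which has signature zero. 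Only then does the correction term vanish at every junction and the inductive additivity go through. To repair your argument you would need to carry out (or at least identify the need for) this signature computation rather than appealing to an additivity statement that does not exist.
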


\begin{lem}\label{degRS}
    If $k_i \neq 0$ and $N$ is sufficiently large, then for any $1 \le j \le m$,
    \[
    i_{RS} \left( \Gamma_{2j}(t) \# \Gamma_{2j-1}(t) \cdot \Gamma_{2j}(1) \right) = i_{RS}(\Gamma_{2j}(t)) + i_{RS}(\Gamma_{2j-1}(t)).
    \]
\end{lem}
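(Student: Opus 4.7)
The plan is to combine the concatenation property of the Robbin--Salamon index with a direct crossing-form computation showing that right-multiplication of the shear path $\Gamma_{2j-1}(t)$ by the constant symplectic matrix $\Gamma_{2j}(1)$ does not change its index. By the concatenation property of the Robbin--Salamon index (\cite{RobbinSalamon}; see also \cite{Gutt,DDP} for formulations handling degenerate endpoints), for any two symplectic paths $\gamma_1,\gamma_2$ with $\gamma_2(0)=\gamma_1(1)$,
$$i_{RS}\bigl(\gamma_1\#\gamma_2\bigr)=i_{RS}(\gamma_1)+i_{RS}(\gamma_2),$$
so the lemma reduces to showing
$$i_{RS}\bigl(\Gamma_{2j-1}(t)\cdot\Gamma_{2j}(1)\bigr)=i_{RS}\bigl(\Gamma_{2j-1}(t)\bigr).$$

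To verify this reduced identity, I first observe that when $k_i\neq 0$ and $N$ is large, Equations \ref{Leig} and \ref{Keig} show that all eigenvalues of $L_j$ and $K_j$ have absolute value of order $N$, so both matrices are invertible. Then I analyze both sides using the crossing-form definition of $i_{RS}$. For $\Gamma_{2j-1}(t)=\begin{pmatrix}I & L_j t\\ 0 & I\end{pmatrix}$ one has $\ker(\Gamma_{2j-1}(t)-I)=\R^n\times\{0\}$ for $t>0$ and $=\R^{2n}$ at $t=0$; since $\dot\Gamma_{2j-1}(t)$ maps $\R^n\times\{0\}$ to zero, the crossing form vanishes on the kernel for every $t>0$, and at $t=0$ it equals the quadratic form $v=(x_v,y_v)\mapsto -y_v^T L_j y_v$ on $\R^{2n}$ whose signature is $-\mathrm{sign}(L_j)$; only the starting half-endpoint contributes, so $i_{RS}(\Gamma_{2j-1}(t))=-\tfrac{1}{2}\mathrm{sign}(L_j)$. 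For $\Psi(t):=\Gamma_{2j-1}(t)\cdot\Gamma_{2j}(1)=\begin{pmatrix}I-L_j t K_j & L_j t\\ -K_j & 0\end{pmatrix}$, invertibility of $K_j$ and $L_j$ forces $\ker(\Psi(t)-I)$ to be trivial for every $t>0$, eliminating crossings on $(0,1]$; at $t=0$ the kernel is $\{0\}\times\R^n$ and the crossing form on it, computed via $\dot\Psi(0)=\dot\Gamma_{2j-1}(0)\cdot\Gamma_{2j}(1)$, again evaluates to $(0,y_v)\mapsto -y_v^T L_j y_v$ with signature $-\mathrm{sign}(L_j)$. Hence $i_{RS}(\Psi)=-\tfrac{1}{2}\mathrm{sign}(L_j)$ as well, and equality holds.

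The main obstacle is that $\Gamma_{2j-1}(t)$ has $1$ as an eigenvalue for every $t\in[0,1]$, so the classical Robbin--Salamon definition based on isolated regular crossings does not apply directly; one must invoke the extended framework of \cite{Gutt,DDP} that accommodates continuously degenerate crossings. The key technical simplification is that the image of the constant kernel $\R^n\times\{0\}$ under $\dot\Gamma_{2j-1}(t)$ is zero, which forces the crossing form to vanish off the initial endpoint and localizes the contribution there. The analogous localization for $\Gamma_{2j-1}(t)\cdot\Gamma_{2j}(1)$ happens by a different mechanism---right-multiplication by $\Gamma_{2j}(1)$ shifts the path into a region where the kernel becomes trivial for $t>0$---but produces the same value because the same quadratic form $-y^T L_j y$ governs the remaining initial contribution.
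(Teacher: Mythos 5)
Your argument is correct, but it takes a genuinely different route from the paper's. The paper proves the lemma by applying the concatenation formula of Theorem~\ref{degRSconcat} (Lemma 3.3 and Corollary 3.7 of \cite{DDP}) with base point $J$: it computes the symmetric matrices $C_J(\Gamma_{2j}(1))$ and $C_J(\Gamma_{2j-1}(1))$ explicitly and checks that the three signature terms in the correction cancel ($\mathrm{sign}\,C_J(\Gamma_{2j}(1))=\mathrm{sign}\,C_J(\Gamma_{2j-1}(1))=-n$ and the signature of the difference is $0$), which uses the eigenvalue bounds on $K_j$ and $L_j$ for large $N$. You instead invoke the catenation axiom of the Robbin--Salamon index, which is unconditional provided the second summand is read as the index of the translated path $t\mapsto\Gamma_{2j-1}(t)\Gamma_{2j}(1)$ (as you do); this reduces the lemma to the identity $i_{RS}\bigl(\Gamma_{2j-1}(t)\Gamma_{2j}(1)\bigr)=i_{RS}\bigl(\Gamma_{2j-1}(t)\bigr)$, which you verify directly. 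The verification is sound: the right-hand side equals $-\tfrac12\mathrm{sign}(L_j)$ by Theorem~\ref{RSshear}, and for the translated path the only crossing is at $t=0$, where the kernel is $\{0\}\times\R^n$ and the crossing form $-y^TL_jy$ is nondegenerate (so the crossing is regular and contributes $-\tfrac12\mathrm{sign}(L_j)$), while invertibility of $K_j$ and $L_j$ kills all crossings for $t>0$. This route is arguably more elementary: it needs only invertibility of $L_j,K_j$ and avoids the Cayley-transform computations, at the cost of one explicit crossing-form calculation. Two minor points: the bottom-right block of $\Gamma_{2j-1}(t)\Gamma_{2j}(1)$ is $I_n$, not $0$ (your kernel computation is consistent with $I_n$, so this is a typo); and since the crossings of the untranslated shear $\Gamma_{2j-1}(t)$ are non-regular, you should simply quote Theorem~\ref{RSshear} for its index rather than argue via crossing forms there, as you partly acknowledge.
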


We also use the following theorem which computes the Robbin–Salamon index of a symplectic shear.

\begin{thm}[Proposition 23, \cite{Gutt}]\label{RSshear}
    Let $B(t)$ be a family of symmetric matrices. Then the Robbin–Salamon index of the symplectic shear $\phi(t) = \begin{pmatrix} I & B(t) \\ 0 & I \end{pmatrix}$ is
    \[
    i_{RS}(\phi(t)) = \frac{1}{2} \mathrm{sign}\, B(0) - \frac{1}{2} \mathrm{sign}\, B(1).
    \]
\end{thm}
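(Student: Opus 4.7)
I would prove the formula directly from the crossing-form definition of the Robbin--Salamon index. Recall that for a smooth symplectic path $\Psi:[0,1]\to\mathrm{Sp}(2n)$ generated by $\dot\Psi = JS\Psi$ with $S(t)$ symmetric, the Robbin--Salamon index can be expressed as a sum of crossing contributions
\[
i_{RS}(\Psi) = \tfrac12\,\mathrm{sign}\,\Gamma(\Psi,0) + \sum_{t_0\in(0,1)\text{ crossing}} \mathrm{sign}\,\Gamma(\Psi,t_0) + \tfrac12\,\mathrm{sign}\,\Gamma(\Psi,1),
\]
where a \emph{crossing} is a time $t_0$ with $\ker(\Psi(t_0)-I)\neq 0$, and the crossing form is the restriction of the symmetric bilinear form $v\mapsto\langle S(t_0)v,v\rangle$ to $\ker(\Psi(t_0)-I)$. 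For a degenerate symmetric form, $\mathrm{sign}$ means $\dim V_+ - \dim V_-$, so the formula extends to non-transverse crossings.

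First I would specialize this to the shear $\phi(t)$. A direct computation gives $\dot\phi(t)\phi(t)^{-1} = \bigl(\begin{smallmatrix}0 & \dot B(t)\\ 0 & 0\end{smallmatrix}\bigr)$, so $\phi$ is generated by $JS(t)$ with $S(t) = \bigl(\begin{smallmatrix}0 & 0 \\ 0 & \dot B(t)\end{smallmatrix}\bigr)$. The fixed-vector equation $(\phi(t_0)-I)v=0$ gives $\ker(\phi(t_0)-I) = \R^n \oplus \ker B(t_0)$, and on this space the crossing form is the degenerate form $(x,y)\mapsto \langle y,\dot B(t_0)y\rangle$, whose signature equals $\mathrm{sign}(\dot B(t_0)|_{\ker B(t_0)})$.

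Next I would reduce to the one-dimensional situation. By homotopy invariance of both sides of the claimed identity (the right-hand side depends only on the endpoints, and the left-hand side is invariant under homotopies of $B$ fixing $B(0)$ and $B(1)$, which follows from standard homotopy invariance of the Robbin--Salamon index through paths with the same endpoints), one can homotope $B(t)$ to a path through simultaneously diagonalizable matrices and then apply \emph{Williamson-type} simultaneous diagonalization. The shear then splits as a direct sum of $n$ commuting $2\times 2$ shears $\phi_i(t) = \bigl(\begin{smallmatrix}1 & b_i(t) \\ 0 & 1\end{smallmatrix}\bigr)$, and the Robbin--Salamon index is additive under such direct sums. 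It therefore suffices to prove the formula when $n=1$.

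For the one-dimensional case $n=1$, the only crossings are the zeros $t_0$ of $b$. At a simple zero the crossing form is the nonzero scalar $\dot b(t_0)$, and its signature contribution is $\mathrm{sign}\,\dot b(t_0)$; since the signature of a scalar jumps by $2\,\mathrm{sign}\,\dot b(t_0)$ each time $b$ crosses zero transversally, summing these interior contributions together with the endpoint halves $\tfrac12\,\mathrm{sign}\,b(0)$ and $-\tfrac12\,\mathrm{sign}\,b(1)$ collapses to $\tfrac12(\mathrm{sign}\,b(0) - \mathrm{sign}\,b(1))$ by telescoping. The multi-dimensional formula follows by summing over eigenvalues. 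The main technical difficulty is bookkeeping for non-transverse crossings (higher-order zeros of $b_i$, or eigenvalue collisions preventing smooth simultaneous diagonalization): these are handled by a standard perturbation argument showing that paths with only regular crossings are generic, and both sides of the identity are continuous on paths with fixed endpoints, so the general case follows by approximation. Sign conventions must also be tracked carefully and matched against the chosen normalization of $i_{RS}$.
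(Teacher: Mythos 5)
The paper does not prove this statement; it is quoted verbatim as Proposition 23 of [Gutt], so there is no internal proof to compare against and your argument has to stand on its own. As written, it has a genuine gap at its central step. The Robbin--Salamon crossing formula you invoke is only valid for paths all of whose crossings are \emph{regular} (isolated, with nondegenerate crossing form). For the shear, \emph{every} $t\in[0,1]$ is a crossing, because $\mathbb{R}^n\oplus 0\subseteq\ker(\phi(t)-I)$ for all $t$, and the crossing form $(x,y)\mapsto\pm\langle\dot B(t)y,y\rangle$ is always degenerate (it vanishes identically on $\mathbb{R}^n\oplus 0$). Your claim that the formula ``extends to non-transverse crossings'' by reading $\mathrm{sign}$ as $\dim V_+-\dim V_-$ is false in general: the Lagrangian path in $\mathbb{R}^2$ given by the line of slope $t^3$ through a horizontal reference Lagrangian has vanishing crossing form at $t=0$ but nonzero Maslov contribution, so the first-order crossing form does not determine the index at a degenerate crossing. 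Nor can the concluding genericity/continuity argument repair this: no perturbation \emph{within the class of shears} makes the crossings regular (the degeneracy on $\mathbb{R}^n\oplus 0$ is structural), and a perturbation leaving that class moves the endpoints $\phi(0),\phi(1)$, which lie on the Maslov cycle, where $i_{RS}$ is not continuous. The missing ingredient is a reduction step --- e.g.\ Robbin--Salamon's localization property: since the fixed isotropic subspace $W=\{(x,0,x,0)\}$ lies in $\mathrm{Gr}(\phi(t))\cap\Delta$ for all $t$, one may pass to the symplectic reduction $W^{\omega}/W$, where the problem becomes the Maslov index of $t\mapsto\mathrm{Gr}(B(t))$ against the horizontal Lagrangian in $\mathbb{R}^n\oplus\mathbb{R}^n$; only after this reduction does your crossing/spectral-flow analysis of $B(t)$ become legitimate.

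Two smaller but real errors: your endpoint bookkeeping is inconsistent --- the crossing-formula endpoint terms are $\tfrac12\mathrm{sign}\,\Gamma(\phi,0)$ and $\tfrac12\mathrm{sign}\,\Gamma(\phi,1)$, which involve $\dot B$ restricted to $\ker B$ (hence vanish when $B(0),B(1)$ are invertible), not $\tfrac12\mathrm{sign}\,B(0)$ and $-\tfrac12\mathrm{sign}\,B(1)$; in the $n=1$ case the interior crossings alone already telescope to $\tfrac12(\mathrm{sign}\,b(0)-\mathrm{sign}\,b(1))$, so adding your ``endpoint halves'' on top would double the answer. And with your stated generator $S=\mathrm{diag}(0,\dot B)$ the crossing contributions come out with the opposite sign to the claimed formula (test $b(t)=1-2t$), so the convention must be fixed, as you note. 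The overall strategy (reduce to the spectral flow of $B(t)$, diagonalize, telescope) is sound and can be completed, but the justification of the crossing-form computation for this everywhere-degenerate path is precisely the content of the proposition and is not supplied.
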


These lemmas allow us to compute the Conley–Zehnder index of $\Gamma(t)$.

\begin{proof}[Proof of Proposition \ref{czindex}]
    By Lemmas \ref{nondegRS} and \ref{degRS}, we have
    \[
    i_{RS}(\Gamma(t)) = \sum_{i=1}^m \left( i_{RS}(\Gamma_{2i}(t)) + i_{RS}(\Gamma_{2i-1}(t)) \right).
    \]

    By Theorem \ref{RSshear}, we have
    \[
    i_{RS}(\Gamma_{2i-1}(t)) = -\frac{1}{2} \mathrm{sign}\, L_i(v_{m-i}) = -\frac{1}{2} \mathrm{sign}(k_{2i-1}) \cdot (n - 1 - \xi_i)
    \]
    and
    \[
    i_{RS}(\Gamma_{2i}(t)) = i_{RS}(J \Gamma_{2i}(t) J^{-1}) = -\frac{1}{2} \mathrm{sign}\, K_i(x_{m-i}) = -\frac{1}{2} \mathrm{sign}(k_{2i}) \cdot (n - 1 - \sigma_i).
    \]

    Therefore, we have
    \[
    i_{RS}(\Gamma(t)) = \sum_{i=1}^m \frac{1}{2} \left( \mathrm{sign}(k_{2i-1})(\xi_i + 1 - n) + \mathrm{sign}(k_{2i})(\sigma_i + 1 - n) \right).
    \]

    This shows
    $$i_{CZ}(\Gamma(t)) = n - i_{RS}(\Gamma(t)) $$ $$= n - \frac{1}{2} \sum_{i=1}^m \left( \mathrm{sign}(k_{2i-1})(\sigma_i + 1 - n) + \mathrm{sign}(k_{2i})(\xi_i + 1 - n) \right).
    .$$
\end{proof}

\subsection{Proof of Proposition \ref{deltaA}}
We are now ready to prove Proposition \ref{deltaA}.

\begin{proof}[Proof of Proposition \ref{deltaA}]
    For sufficiently large $N$, pick $\gamma(N,w)$ to be the conjugacy class of $a_1 * b_1 * \cdots * a_m * b_m \in \pi_1(M)$ such that $\frac{N\epsilon}{4} \le |a_i|, |b_i| \le \frac{N \epsilon}{3}$ and $a_i$, $b_j$ are powers of $a$ and $b$ respectively. Since any word in $a, b$ is uniquely reduced in the group $\langle \pi_1(T_1), \pi_1(T_2) \rangle$, any fixed point of $\tau(N,w)$ with orbit class $\gamma_N$ must satisfy Equation \ref{perpts} with $\alpha_i = a_{i+e}$, $\beta_i = b_{i+e}$ for some $e \in \{0, 1, \cdots, m-1\}$. 
    
    In the general case where $w$ is not a power, pick $a_1 = a_2 = \cdots = a_m$, $b_1 = b_2 = \cdots = b_m$. Then by Lemma \ref{persig}, there are $2^{2m}$ fixed points of $\tau(N,w)$, whose Conley–Zehnder indices are given by Proposition \ref{czindex}. 
    
    From Proposition \ref{czindex}, there exists a unique fixed point $p = (v_0, x_0)$ of $\tau(N,w)$ with minimum Conley–Zehnder index $r$, such that its intermediate points $(v_i, x_i)$ given in Equation \ref{perpts} satisfy $x_i \in B_i^{-\mathrm{sign}(k_{2i-1})}$ and $v_i \in C_i^{-\mathrm{sign}(k_{2i})}$. Let $q \in \partial^{-1}(p)$. Then the Conley–Zehnder index of $q$ is $r-1$, and the signs $\sigma_i$ and $\xi_i$ for $p$ and $q$ are the same except for one entry, which we assume without loss of generality to be $\sigma_j$.

    By Proposition \ref{pact}, for any $q \in \partial^{-1}(p)$,
    \[
    \left| \mathcal{A}_{\tau(N,w)}(p) - \mathcal{A}_{\tau(N,w)}(q) \right| = \left| \mathcal{A}_{2j}(p) - \mathcal{A}_{2j}(q) \right| + O(1) \ge \frac{N \epsilon^2}{10 \max_j |k_j|} + O(1)
    \]
    for sufficiently large $N$. 

    In the case where $w = (a^{k_1} b^{k_2})^m$, denote $w_1 = a^{k_1} b^{k_2}$. Pick $a_1 * b_1 * \cdots * a_m * b_m \in \pi_1(M)$ to have no symmetry, i.e., $a_1 * b_1 * \cdots * a_m * b_m \neq a_{1+e} * b_{1+e} * \cdots * a_{m+e} * b_{m+e}$ for any $e = 1, 2, \cdots, m-1$. By Lemma \ref{persig}, there are $2^{2m}$ sets of fixed points of $\tau(N,w)$ of the form 
    \[
    \{ p, \tau(N,w_1) p, \tau(N,w_1^2) p, \cdots, \tau(N,w_1^{m-1}) p \}.
    \]
    By definition, the fixed points in the orbit of $\tau(N,w_1)$ have the same index and action. Then by Proposition \ref{czindex} the same argument as above shows that there is a unique set of fixed points with minimum index, with action difference larger than $\frac{N \epsilon^2}{10 \max_j |k_j|} + O(1)$.

    This proves the claim.
\end{proof}

\subsection{Proof of Lemma \ref{nondegRS}}
In this section, we prove Lemma \ref{nondegRS} using the following formula for the Robbin–Salamon index of concatenation of non-degenerate paths.

Let $P$ be a symplectic matrix with $(I - P)$ invertible. The Cayley transform of $P$ is defined as the symmetric matrix
\[
M_P = \frac{1}{2} J (I + P) (I - P)^{-1}.
\]

The following theorem, a combination of Corollary 3.5 and Lemma 3.3 of \cite{DDP}, allows us to compute the Robbin–Salamon index for a concatenation of non-degenerate paths of symplectic matrices.

\begin{thm}[Corollary 3.5 and Lemma 3.3, \cite{DDP}]\label{nondegconcat}
    Let $A(t), B(t) \in \mathrm{Sp}_{2n}(\mathbb{R})$ be two non-degenerate paths of symplectic matrices with $A(0) = B(0) = I$. Suppose $A(1) - I$ and $B(1) - I$ are both invertible. Then the Robbin–Salamon index of their concatenation is
    \[
    i_{RS}( \{A(t)\} \# \{B(t) \circ A(1)\} ) = i_{RS}(A(t)) + i_{RS}(B(t)) - \frac{1}{2} \left( \mathrm{sign}(M_{A(1)} + M_{B(1)}) \right).
    \]
    Here, the signature of a symmetric matrix is the number of positive eigenvalues minus the number of negative eigenvalues.
\end{thm}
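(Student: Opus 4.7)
The plan is to prove Lemma \ref{nondegRS} by iteratively applying the concatenation formula of Theorem \ref{nondegconcat} to the decomposition of $\Gamma(t)$ into the $m$ pair paths $P_j(t) := \Gamma_{2j}(t) \# \Gamma_{2j-1}(t) \cdot \Gamma_{2j}(1)$, and then showing that every correction term vanishes for sufficiently large $N$. Each application of the formula produces a correction of the form $-\tfrac{1}{2}\mathrm{sign}(M_{Q_{k-1}(1)} + M_{P_k(1)})$, where $Q_{k-1}(1) := P_{k-1}(1) \cdots P_1(1)$ is the accumulated endpoint; the lemma is equivalent to the vanishing of each such signature for large $N$.

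First I verify the non-degeneracy hypotheses needed to invoke Theorem \ref{nondegconcat}. A direct block computation gives
\[
I - P_j(1) = \begin{pmatrix} L_j K_j & -L_j \\ K_j & 0 \end{pmatrix},
\]
which is invertible because $L_j$ and $K_j$ are invertible under the eigenvalue analysis of Equations \ref{Leig} and \ref{Keig}: the only potentially vanishing eigenvalue is $\tfrac{2(\epsilon - 2|v|)}{\epsilon}$, and the fixed-point localization $|v| \in (\epsilon/2, \epsilon)$ from Lemma \ref{persig} keeps it strictly away from zero. For the accumulated endpoints $Q_k(1)$, an inductive estimate shows that certain blocks grow polynomially in $N$, so $I - Q_k(1)$ is also invertible for large $N$.

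Next I compute the Cayley transform of each pair endpoint. Using the Schur complement (which equals $I$ for the block above), the block inverse formula yields
\[
(I - P_j(1))^{-1} = \begin{pmatrix} 0 & K_j^{-1} \\ -L_j^{-1} & I \end{pmatrix},
\]
and a short computation gives
\[
M_{P_j(1)} = \tfrac{1}{2} J (I + P_j(1))(I - P_j(1))^{-1} = \begin{pmatrix} -L_j^{-1} & I/2 \\ I/2 & -K_j^{-1} \end{pmatrix}.
\]
Since $\|L_j^{-1}\|, \|K_j^{-1}\| = O(1/N)$, this Cayley transform converges to $M_0 := \begin{pmatrix} 0 & I/2 \\ I/2 & 0 \end{pmatrix}$ at rate $O(1/N)$; the limit $M_0$ has signature zero.

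The heart of the proof is showing $\mathrm{sign}(M_{Q_{k-1}(1)} + M_{P_k(1)}) = 0$ for all large $N$. I proceed by induction on $k$ together with an asymptotic description of $M_{Q_{k-1}(1)}$ as $N \to \infty$. The base case $k = 2$ is immediate: $M_{P_1(1)} + M_{P_2(1)}$ converges to $\begin{pmatrix} 0 & I \\ I & 0 \end{pmatrix}$, which is nondegenerate of signature zero. For the inductive step, one tracks the dominant blocks in $(I - Q_{k-1}(1))^{-1}$ by iterated block inversion, showing that $M_{Q_{k-1}(1)}$ stabilizes to a symmetric matrix whose off-diagonal blocks are $\pm I/2$ (with sign determined by the parity of the number of shears composed) and whose diagonal blocks are $O(1/N)$. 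Then $M_{Q_{k-1}(1)} + M_{P_k(1)}$ converges to a nondegenerate symmetric matrix of signature zero, and continuity of the signature on nondegenerate symmetric matrices propagates this to large but finite $N$.

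The principal obstacle is the asymptotic analysis in this last step, since iterated products of alternating shears give rise to matrices with block entries ranging over many orders of magnitude in $N$ and careful bookkeeping of the dominant terms is required. A cleaner alternative would be to identify $\mathrm{sign}(M_A + M_B)$ with the H\"ormander-type triple index of the Lagrangian triple consisting of the diagonal, the graph of $Q_{k-1}(1)$, and the graph of $P_k(1)^{-1}$ inside $(\R^{4n}, \omega \oplus -\omega)$; in the limit $N \to \infty$ the three Lagrangians become mutually transverse and the triple index vanishes, giving the correction directly without explicit matrix computation.
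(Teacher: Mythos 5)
There is a genuine gap, and it is structural: your proposal does not prove the statement at hand. The statement is Theorem \ref{nondegconcat} itself — the general formula
\[
i_{RS}\bigl( \{A(t)\} \# \{B(t) \circ A(1)\} \bigr) = i_{RS}(A(t)) + i_{RS}(B(t)) - \tfrac{1}{2}\,\mathrm{sign}\bigl(M_{A(1)} + M_{B(1)}\bigr)
\]
for arbitrary non-degenerate paths in $\mathrm{Sp}_{2n}(\R)$ with $A(1)-I$ and $B(1)-I$ invertible. In the paper this is an imported result (Corollary 3.5 and Lemma 3.3 of the cited reference), stated without proof. Your text instead takes this formula as given and uses it to prove Lemma \ref{nondegRS}, i.e. you prove a downstream application rather than the theorem itself. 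Relative to the task, that is circular: at no point do you establish why the correction term is $-\tfrac{1}{2}\mathrm{sign}(M_{A(1)}+M_{B(1)})$, which is the entire content of the statement.

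A proof of the actual statement has to engage with the definition of the Robbin--Salamon index. The key point is that $i_{RS}$ is additive under genuine concatenation of paths, so the formula reduces to showing that right-translation changes the index by the claimed signature term, namely $i_{RS}(\{B(t)A(1)\}) = i_{RS}(\{B(t)\}) - \tfrac{1}{2}\mathrm{sign}(M_{A(1)}+M_{B(1)})$. This is typically done by expressing the index of a path ending at a matrix $P$ with $I-P$ invertible in terms of the signature of the Cayley transform $M_P = \tfrac{1}{2}J(I+P)(I-P)^{-1}$ (the content of Lemma 3.3 of the reference), or equivalently via the H\"ormander triple index of the Lagrangian triple consisting of the diagonal, the graph of $A(1)$, and the graph of $B(1)A(1)$ in $(\R^{2n}\times\R^{2n}, \omega\oplus(-\omega))$, together with the cocycle identity for that triple index. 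Your closing remark about a "H\"ormander-type triple index" gestures in the right direction, but it is offered as an alternative route for computing the correction term inside your proof of Lemma \ref{nondegRS}, not as a derivation of the concatenation formula itself. As written, the proposal establishes nothing about Theorem \ref{nondegconcat}.
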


Now we prove Lemma \ref{nondegRS} by applying the theorem above.

\begin{proof}[Proof of Lemma \ref{nondegRS}]
    Denote $\Phi_j(t) = \Gamma_{2j}(t) \# \Gamma_{2j-1}(t) \cdot \Gamma_{2j}(1)$ for $1 \le j \le m$. Then
    \[
    \Phi_i(1) = \begin{pmatrix} I_n - L_i(v_{m-i}) K_i(x_{m-i}) & L_i(v_{m-i}) \\ -K_i(x_{m-i}) & I_n \end{pmatrix}.
    \]

    \textbf{Step 1}: Let $P = \Phi_i(1) \cdot \Phi_{i+1}(1) \cdots \Phi_m(1)$. We claim that for sufficiently large $N$, the matrix $I - P$ is invertible and $\mathrm{sign}(M_P) = 0$. Similarly, $I - \Phi_i(1)$ is invertible and $\mathrm{sign}(M_{\Phi_i(1)}) = 0$.

    Let $\bar{L}_i = \frac{L_i(v_{m-i})}{N}$, $\bar{K}_i = \frac{K_i(x_{m-i})}{N}$, and $C_i = \bar{L}_i \bar{K}_i \cdots \bar{L}_m \bar{K}_m$. Then the entries of $\bar{L}_i, \bar{K}_i, C_i$ lie in a compact subset independent of $N$. We have
    \[
    P = (-1)^{m-i} \begin{pmatrix} -N^{m-i+2} C_i & N^{m-i+1} C_i \bar{K}_m^{-1} \\ -N^{m-i+1} \bar{L}_i^{-1} C_i & N^{m-i} \bar{L}_i^{-1} C_i \bar{K}_m^{-1} \end{pmatrix} \cdot \left(1 + O\left(\frac{1}{N}\right)\right).
    \]
    Therefore, we have
    \[
    (I - P)^{-1} = \begin{pmatrix}
        \frac{1}{N^2} \bar{K}_m^{-1} \bar{L}_i^{-1} + \frac{(-1)^{m-i}}{N^{m-i+2}} C_i^{-1} & -\frac{1}{N} \bar{K}_m^{-1} \\
        -\frac{1}{N} \bar{L}_i^{-1} & I_n
    \end{pmatrix} \cdot \left(1 + O\left(\frac{1}{N}\right)\right),
    \]
    where $O(1/N)$ denotes a matrix with entries bounded by $O(1/N)$.

    This shows that
    \[
    M_P = \frac{1}{2} J (I + P) (I - P)^{-1} = -\frac{1}{2} J + J (I - P)^{-1} = \begin{pmatrix} 0 & \frac{1}{2} I_n \\ \frac{1}{2} I_n & 0 \end{pmatrix} \cdot \left(1 + O\left(\frac{1}{N}\right)\right).
    \]
    This shows that $\mathrm{sign}(M_P) = 0$ for sufficiently large $N$.

    \textbf{Step 2}: We use induction to prove that the Robbin–Salamon index of the concatenation is the sum of the indices of the intermediate paths.

    Applying Theorem \ref{nondegconcat} to $A(t) = \Phi_i(t)$ and $B(t) = \Phi_{i+1}(t) \# \Phi_{i+2}(t) \# \cdots \# \Phi_m(t)$, and using Step 1, we obtain
    \[
    i_{RS}(\Phi_i(t) \# \Phi_{i+1}(t) \# \cdots \# \Phi_m(1)) = i_{RS}(\Phi_i(t)) + i_{RS}(\Phi_{i+1}(t) \# \cdots \# \Phi_m(1)).
    \]
    By induction on $i$ from $m-1$ down to $1$,
    \[
    i_{RS}(\Gamma(t)) = \sum_{i=1}^m i_{RS}(\Phi_i(t))
    \]
    for large enough $N$. This proves the lemma.
\end{proof}

\subsection{Proof of Lemma \ref{degRS}}
To compute the Robbin–Salamon index for concatenation of degenerate paths, we use a different base point.

For $P \in \mathrm{Sp}(2n)$ such that $(P - J)$ is invertible, consider the symmetric matrix
\[
C_J(P) = J (J - I) (P - J)^{-1} (P - I).
\]
The following theorem follows from Lemma 3.3 and Corollary 3.7 of \cite{DDP}.

\begin{thm}[Lemma 3.3 and Corollary 3.7, \cite{DDP}]\label{degRSconcat}
    Let $A(t), B(t) \in \mathrm{Sp}_{2n}(\mathbb{R})$ be two paths of symplectic matrices with $A(0) = B(0) = I$. Suppose $A(1) - J$ and $B(1) - J$ are both invertible. Then the Robbin–Salamon index of their concatenation is
    $$
    i_{RS}( \{A(t)\} \# \{B(t) \circ A(1)\} ) = i_{RS}(A(t)) + i_{RS}(B(t)) $$$$+ \frac{1}{2} \left( \mathrm{sign}(C_J(A(1)) - C_J(B(1))) - \mathrm{sign}(C_J(B(1))) + \mathrm{sign}(C_J(A(1))) \right).
   $$ 
    Here, the signature of a symmetric matrix is the number of positive eigenvalues minus the number of negative eigenvalues.
\end{thm}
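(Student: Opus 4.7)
The plan is to view the Robbin--Salamon index as a Maslov intersection number of graph Lagrangians. For a symplectic path $\Phi: [0,1] \to \mathrm{Sp}(2n)$, the graph $\mathrm{Gr}(\Phi(t)) \subset (\mathbb{R}^{4n}, (-\omega_0) \oplus \omega_0)$ is Lagrangian, and $i_{RS}(\Phi)$ equals the Maslov intersection index $\mu(\mathrm{Gr}(\Phi(t)), \Delta)$ with the diagonal $\Delta = \mathrm{Gr}(I)$. Geometrically, invertibility of $P - J$ corresponds to transversality $\mathrm{Gr}(P) \pitchfork \mathrm{Gr}(J)$, while invertibility of $P - I$ corresponds to $\mathrm{Gr}(P) \pitchfork \Delta$. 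The symmetric matrix $C_J(P)$ plays the role for the reference Lagrangian $\mathrm{Gr}(J)$ that $M_P$ plays for $\Delta$: it encodes the generating quadratic form of $\mathrm{Gr}(P)$ as a graph over the chosen reference Lagrangian.

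The strategy is to reduce the statement to the already-available formula in Theorem \ref{nondegconcat} by changing the reference Lagrangian. I would first perturb $A$ and $B$ near $t=1$ so that $A(1)-I$ and $B(1)-I$ become invertible while invertibility of $A(1)-J$ and $B(1)-J$ is preserved; this produces perturbed paths to which Theorem \ref{nondegconcat} directly applies, giving a concatenation formula with correction $-\frac{1}{2}\mathrm{sign}(M_{A(1)} + M_{B(1)})$. The finitely many additional Maslov crossings produced by the perturbation contribute controllable signature corrections that can be tracked explicitly by the Robbin--Salamon crossing formula. Next, I would invoke the H\"ormander/Maslov cocycle identity applied to the four Lagrangians $\Delta$, $\mathrm{Gr}(J)$, $\mathrm{Gr}(A(1))$, $\mathrm{Gr}(A(1)B(1))$: this identity expresses $\mathrm{sign}(M_{A(1)} + M_{B(1)})$ as a signed combination of $\mathrm{sign}(C_J(A(1)))$, $\mathrm{sign}(C_J(B(1)))$, and $\mathrm{sign}(C_J(A(1)) - C_J(B(1)))$. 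Combining with the perturbation corrections and letting the perturbation go to zero yields the stated asymmetric formula.

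The main obstacle is establishing the H\"ormander signature identity with exactly the right asymmetric shape. The presence of $\mathrm{sign}(C_J(A(1)) - C_J(B(1)))$ rather than a more symmetric expression reflects that the concatenation has a distinguished junction point, namely $A(1)$, and that the graphs $\mathrm{Gr}(A(1))$ and $\mathrm{Gr}(A(1)B(1))$ play asymmetric roles in the cocycle. Obtaining the correct signs and orderings demands delicate bookkeeping of orientations on the Lagrangian Grassmannian and of the signatures of generating quadratic forms under symplectic composition. A possibly cleaner alternative, which I would pursue if the cocycle computation proves too intricate, is to derive directly an explicit composition law $C_J(PQ) = \Psi(C_J(P), C_J(Q))$ from the formulas for $\mathrm{Gr}(PQ)$ as a reduction of $\mathrm{Gr}(P) \times \mathrm{Gr}(Q)$ along a common transversal, and then extract the junction signature from the resulting quadratic-form identity; this bypasses the cocycle but reduces the problem to a concrete, if lengthy, linear-algebra computation in $\mathrm{Sp}(2n)$.
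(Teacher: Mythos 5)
First, note that the paper does not prove this statement at all: it is imported as a black box from \cite{DDP} (Lemma 3.3 and Corollary 3.7 there), so there is no in-paper argument to compare yours against. Your proposal therefore has to stand on its own, and while it correctly identifies the standard framework --- $i_{RS}$ as the Maslov index of the graph Lagrangian against the diagonal in $(\mathbb{R}^{4n},(-\omega_0)\oplus\omega_0)$, invertibility of $P-J$ as transversality of $\mathrm{Gr}(P)$ to $\mathrm{Gr}(J)$, and $C_J(P)$ as the generating form of $\mathrm{Gr}(P)$ relative to the reference Lagrangian $\mathrm{Gr}(J)$, which is exactly the device \cite{DDP} introduces to handle degenerate endpoints --- it is a roadmap rather than a proof. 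The two steps that carry all of the content are exactly the ones you defer: (a) the precise H\"ormander-type signature identity with the stated asymmetric shape, and (b) the bookkeeping of what happens as your perturbation is removed.

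Concretely, the perturbation-and-limit route has an unaddressed discontinuity problem. When $A(1)-I$ is singular, $M_{A(1)}=\tfrac12 J(I+A(1))(I-A(1))^{-1}$ is not even defined, and as the perturbation parameter tends to zero the quantity $\mathrm{sign}(M_{A_\varepsilon(1)}+M_{B_\varepsilon(1)})$ blows up or jumps; simultaneously $i_{RS}(A_\varepsilon)$ and $i_{RS}(B_\varepsilon)$ each jump by half the signature of the crossing form at the endpoint $t=1$ (this half-integer endpoint contribution is the whole reason the Robbin--Salamon index is needed here). The stated formula is precisely the assertion that these jumps cancel against each other and against the $C_J$-signature terms in a specific way; saying the corrections are ``controllable'' and can be ``tracked explicitly'' asserts the conclusion rather than proving it. Your alternative route --- deriving a composition law for $C_J$ directly from symplectic reduction of $\mathrm{Gr}(P)\times\mathrm{Gr}(Q)$ --- is in fact closer to how such formulas are actually established, but again the quadratic-form identity that would produce the term $\mathrm{sign}(C_J(A(1))-C_J(B(1)))$ with the correct sign is exactly the missing lemma. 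To close the gap you would either need to carry out that linear-algebra computation in full, or simply cite \cite{DDP} as the paper does.
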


We use this formula to show that $i_{RS}(\Gamma_{2j-1}(t) \# \Gamma_{2j}(t)) = i_{RS}(\Gamma_{2j-1}(t)) + i_{RS}(\Gamma_{2j}(t))$.

\begin{proof}[Proof of Lemma \ref{degRS}]
    For sufficiently large $N$, by Equations \ref{Leig} and \ref{Keig}, the absolute values of the eigenvalues of $K_i$ and $L_i$ are $\ge \frac{N}{10}$. A direct computation shows that $\Gamma_{2j}(1) - J$ is invertible, and
    \[
    C_J(\Gamma_{2j}(1)) = J (J - I) (\Gamma_{2j}(1) - J)^{-1} (\Gamma_{2j}(1) - I) = \begin{pmatrix} -2I + 4(2I - K_i(x_{m-i}))^{-1} & 0 \\ 0 & 0 \end{pmatrix}.
    \]
    Similarly,
    \[
    C_J(\Gamma_{2j-1}(1)) = \begin{pmatrix} 0 & 0 \\ 0 & -2I + 4(2I - L_i(v_{m-i}))^{-1} \end{pmatrix}.
    \]
    Since the eigenvalues of $K_i$ and $L_i$ have absolute value $\ge \frac{N}{10}$, for large enough $N$ we have $\mathrm{sign}(C_J(\Gamma_{2j}(1))) = -n$, $\mathrm{sign}(C_J(\Gamma_{2j-1}(1))) = -n$, and $\mathrm{sign}(C_J(\Gamma_{2j-1}(1)) - C_J(\Gamma_{2j}(1))) = 0$.

    Applying Theorem \ref{degRSconcat} to $A(t) = \Gamma_{2j-1}(t)$ and $B(t) = \Gamma_{2j}(t)$ yields
    \[
    i_{RS}(\Gamma_{2j-1}(t) \# \Gamma_{2j}(t)) = i_{RS}(\Gamma_{2j-1}(t)) + i_{RS}(\Gamma_{2j}(t)).
    \]
\end{proof}

\appendix

\bibliography{infhof}{}
\bibliographystyle{acm}

\end{document}